\documentclass[11pt]{article}

\usepackage[a4paper,margin=1.15in]{geometry}
\usepackage[T1]{fontenc}
\usepackage{lmodern}
\usepackage{microtype}
\usepackage{amsmath,amssymb,amsthm,mathtools}
\usepackage{enumitem}
\usepackage{aliascnt}
\usepackage[hidelinks]{hyperref}
\usepackage[nameinlink,capitalise,noabbrev]{cleveref}

\newcommand{\F}{\mathbb F}
\newcommand{\GL}{\operatorname{GL}}
\newcommand{\SL}{\operatorname{SL}}
\newcommand{\PGL}{\operatorname{PGL}}
\newcommand{\PSL}{\operatorname{PSL}}
\newcommand{\Irr}{\operatorname{Irr}}
\newcommand{\supp}{\operatorname{supp}}
\newcommand{\rank}{\operatorname{rank}}

\newcommand{\Ind}{\operatorname{Ind}}
\newcommand{\1}{\mathbf 1}

\theoremstyle{plain}
\newtheorem{theorem}{Theorem}[section]
\newaliascnt{proposition}{theorem}
\newtheorem{proposition}[proposition]{Proposition}
\aliascntresetthe{proposition}
\newaliascnt{lemma}{theorem}
\newtheorem{lemma}[lemma]{Lemma}
\aliascntresetthe{lemma}
\newaliascnt{corollary}{theorem}

\aliascntresetthe{corollary}
\newaliascnt{conjecture}{theorem}
\newtheorem{conjecture}[conjecture]{Conjecture}
\aliascntresetthe{conjecture}
\theoremstyle{definition}
\newaliascnt{definition}{theorem}
\newtheorem{definition}[definition]{Definition}
\aliascntresetthe{definition}
\newaliascnt{question}{theorem}
\newtheorem{question}[question]{Question}
\aliascntresetthe{question}
\theoremstyle{remark}
\newaliascnt{remark}{theorem}
\newtheorem{remark}[remark]{Remark}
\aliascntresetthe{remark}
\newaliascnt{example}{theorem}

\aliascntresetthe{example}

\title{On Shalev's Quantitative Version of Thompson's Conjecture}
\author{Ohad Sheinfeld\thanks{Einstein Institute of Mathematics, Hebrew
University. The author is partially supported by the European Research
Council (StG no.~101163794). \texttt{oshenfeld@gmail.com}.}}
\date{}

\begin{document}

\maketitle

\begin{abstract}
   The classical Thompson's conjecture asserts that in any finite nonabelian simple group $G$, there exists a conjugacy class $S$ whose square is equal to the entire group. Shalev (Annals Math., 2009) conjectured that a much stronger quantitative version holds: there exists $\epsilon > 0$ such that for any conjugacy class $S$ with 
   $|S| \geq |G|^{1-\epsilon}$, we have
   $G \setminus \{1\} \subseteq S^2$. While the conjecture turns out to  
   fail in general, we prove a close variant of the conjecture over $\PSL_n(2)$:
There exist
    \(\epsilon>0\) and \(N \in \mathbb{N}\) such that for all
    \(n\geq N\), every conjugacy class \(S\subseteq\PSL_n(2)\) with
    \(|S|\geq|\PSL_n(2)|^{1-\epsilon}\) satisfies
$
   \{x\in\PSL_n(2):\supp(x)\neq1\}\subseteq SS^{-1}.
$
On the other hand, we show that for each $\epsilon>0$, there exist conjugacy classes $S \subset \PSL_n(8)$ of size $>|\PSL_n(8))|^{1-\epsilon}$ for which $SS^{-1}$ misses even elements with support size linear in $n$. 
In addition, for every odd $n \ge 5$, we construct an inverse-closed normal set in $\PSL_n(5)$ with fixed
positive density whose square misses a nonidentity element. This answers on the negative a question of Shalev (ECM, 2021) and disproves a
conjecture of Pyber (Kourovka notebook, 2022).
\end{abstract}

\section{Introduction}
\label{sec:introduction}

Let \(G\) be a finite nonabelian simple group and let \(S\subseteq G\) be a
conjugacy class.  Shalev~\cite[Conjecture~10.3]{Shalev2009} conjectured that
there exists \(\epsilon_0>0\) such that
\begin{equation}\label{eq:intro-shalev}
|S^2|\geq
   \min\bigl\{|S|^{1+\epsilon_0},\,|G|-\delta_S\bigr\},
\end{equation}
where $\delta_S=0$ if $S=S^{-1}$ and $\delta_S=1$ is $S \neq S^{-1}$.
We call \(S\) \emph{real} when \(S=S^{-1}\). Clearly, the square of a nonreal conjugacy class  cannot contain the identity.

The minimum in~\eqref{eq:intro-shalev} separates two regimes.  In the ``growth regime'', where 
\(
   |S|\leq |G|^{1-\epsilon_1},
\) the conjecture was proved by 
Gill, Pyber, Short and Szab\'{o}~\cite[Proposition~5.2]{gill2013product}. Specifically, they proved that for every $\epsilon_1>0$ there is
\(\epsilon_2 >0\), depending only on \(\epsilon_1\), such that
\(
   |S^2|\geq |S|^{1+\epsilon_2}.
\)
We therefore focus on the complementary regime, namely the problem of establishing \(S^2\supseteq G\setminus\{1\}\) for classes satisfying \(|S|\geq|G|^{1-\epsilon}\)
for a small fixed \(\epsilon>0\).  

A famous conjecture of Thompson states that every finite nonabelian simple group
\(G\) contains a conjugacy class whose square is \(G\).  A recent breakthrough of Larsen and Tiep~\cite{LarsenTiepThompson}
proved the conjecture for all sufficiently large \(G\) using character bounds, and very recently,
Liao, Wang, and Zhang~\cite{LiaoWangZhang2026} 
solved the conjecture in full generality. 
Shalev's conjecture in this regime is a robust variant of Thompson's conjecture: it
replaces the existence of one conjugacy class whose square covers the entire group $G$ with the assertion that for every `sufficiently large' conjugacy class $S$, we have \(S^2=G\) when \(S\) is real and
\(S^2=G\setminus\{1\}\) otherwise. 

Since in the bounded rank regime there are no conjugacy classes of size at least \(|G|^{1-\epsilon}\), we study this problem in the high rank regime.
Let us introduce an important notion: The \emph{support} of an element \(g\in\GL_n(q)\) is defined as
\[\supp(g):=\min_{\lambda\in\overline{\F}_q^\times}
\rank(g-\lambda I),
\] meaning the codimension of a largest eigenspace over
the algebraic closure.

\paragraph{Shalev's conjecture fails in general.} It turns out that Shalev's conjecture does not hold in full generality, as in some cases, not only $1$ cannot be covered by elements of $S^2$, but also some other elements, which have support 1. 
Specifically, Uhlig~\cite[Lemma~2]{Uhlig1979}
(see Nielsen~\cite[Lemma~3.2]{Nielsen2026} for a short proof)
showed, that for every conjugacy class \(S\subseteq\GL_n(q)\) with an 
irreducible characteristic polynomial, \(SS^{-1}\)
does not contain some elements of support 1.  For every prime power \(q\) and every
even dimension \(n\geq4\), the conjugacy class \(S\subseteq\SL_n(q)\) can be chosen to be real, and thus, $S^2=SS^{-1}$ misses some elements with support 1. For fixed \(q\), and $n\rightarrow \infty$, we have
\[
   \frac{\log|S|}{\log|\SL_n(q)|}\longrightarrow1,
   \qquad
   S^2=SS^{-1},
\]
while \(S^2\) omits some elements of support 1.
This naturally leads to the following \emph{approximate covering} question, which we would like to explore in this paper:
\begin{question}
    Is it true that for any high rank linear group $G$ and for any conjugacy class $S$ with $|S|>|G|^{1-\epsilon}$, $S^2$ covers any element of $G$ of support at least 2? And if not, can $S^2$ be replaced by a more appropriate product to obtain such a covering result? 
\end{question}

\paragraph{A negative result for approximate covering with $S^2$.} Our first result shows that the answer to the first question is negative. 
For a monic polynomial
\(f\), write \(C(f)\) for its companion matrix. We have the following counterexample:
\begin{theorem}
\label{thm:intro-f2-obstruction}
Let \(p\in\F_2[T]\) be monic and irreducible of degree \(d\geq3\), and
suppose that its roots are not closed under inversion.  Let \(m\geq1\) and
put \(n=dm\).  Let \(S\) be the conjugacy class in \(\GL_n(2)=\PSL_n(2)\) of the
block-diagonal matrix \(C(p)^{\oplus m}\) formed from \(m\) copies of
\(C(p)\).  Then we have
\[
   \frac{\log|S|}{\log|\GL_n(2)|}
      =1-\frac1d+O(n^{-2}),
\]
but
\[
   S^2\cap\{x\in\GL_n(2):\supp(x)<m\}=\varnothing.
\] 
\end{theorem}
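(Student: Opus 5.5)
Two things must be shown: an estimate for $|S|$, and an obstruction showing $S^2$ contains no element of small support. I take them in that order.

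\textbf{Size of $S$.} Let $g=C(p)^{\oplus m}$. Its centralizer in $\GL_n(2)$ is the centralizer of the $\F_2[T]$-module $(\F_2[T]/p)^m$, which is $\GL_m(\F_{2^d})$. Hence
\[
|S|=|\GL_n(2)|/|\GL_m(2^d)|.
\]
We have $\log_2|\GL_n(2)|=n^2+O(1)$ and $\log_2|\GL_m(2^d)|=dm^2+O(1)=n^2/d+O(1)$, where the $O(1)$ terms are uniform because $\prod_i(1-q^{-i})$ is bounded away from $0$. Dividing gives
\[
\frac{\log|S|}{\log|\GL_n(2)|}=1-\frac1d+O(n^{-2}).
\]

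\textbf{The obstruction.} Suppose $x=ab$ with $a,b\in S$, and set $V=\F_2^n$. Then $a$ and $b$ both have minimal polynomial $p$, so $V$ is an $\F_2[a]\cong\F_{2^d}$-vector space, and likewise for $b$.

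Let $\lambda$ be an eigenvalue of $x$ over $\overline{\F}_2$ whose eigenspace $E$ has dimension $n-\supp(x)$. Then $ab-\lambda$ has rank $\supp(x)$.

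Over $\overline{\F}_2$, decompose $\overline V=\bigoplus_\alpha V_\alpha(a)$ into eigenspaces of $a$, where $\alpha$ runs over the $d$ roots of $p$. Each $V_\alpha(a)$ has dimension exactly $m$, and the same holds for $b$. On $E$ we have $ab=\lambda$, that is, $b=a^{-1}\lambda$ on $E$. Because $b$ is semisimple, I need to work with the $b$-invariant part of $E$ rather than $E$ itself. If $v\in E$ and $bv\in E$, then the subspace $W=E\cap b^{-1}E\cap\cdots$ of codimension at most $d\cdot\supp(x)$ is $b$-invariant, and on $W$ we have $a^{-1}=\lambda^{-1}b$. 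I expect this is the step that needs the most care.

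Assume $\supp(x)<m$. The $b$-invariant subspace $W$ should then still meet every $V_\beta(b)$ nontrivially. The cleaner route is to intersect eigenspaces directly: $\dim V_\beta(b)\cap E\ge m-\supp(x)>0$, because $E$ has codimension $\supp(x)<m$. So for each root $\beta$ there is $0\neq v\in V_\beta(b)\cap E$. For such $v$ we get $abv=\beta av=\lambda v$, so $av=\lambda\beta^{-1}v$, and $\lambda\beta^{-1}$ is an eigenvalue of $a$, hence a root of $p$.

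This holds for every root $\beta$. Therefore the map $\beta\mapsto\lambda\beta^{-1}$ sends the root set $R$ of $p$ into $R$, and since it is injective it is a bijection, giving $\lambda R^{-1}=R$.

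\textbf{Pinning down $\lambda$.} From $\lambda R^{-1}=R$, multiplying over all roots gives $\lambda^d=N(\beta)^2=1$, since the norm lies in $\F_2^\times=\{1\}$. I then want to show $\lambda\in\F_2$, which forces $\lambda=1$ and hence $R^{-1}=R$, contradicting the hypothesis.

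Frobenius $\sigma$ permutes $R$, so $\sigma(\lambda)R^{-1}=R$ as well. Fix $\beta\in R$. Then $\lambda\beta^{-1}\in R$ means $\lambda\in\beta R$, and all of $\beta R$ lies in $\F_2(\beta)=\F_{2^d}$, so $\lambda\in\F_{2^d}$. Next, $\lambda/\sigma(\lambda)$ stabilizes the set $R^{-1}$. Multiplication by a scalar $c\ne1$ that stabilizes $R^{-1}$ would force $R^{-1}$ to be a union of $\langle c\rangle$-cosets. I expect this to be the main obstacle, to be settled by a norm or trace argument.

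The fallback is a sum argument. Let $s=\sum_{\beta\in R}\beta$ and $t=\sum_{\beta\in R}\beta^{-1}$. From $\lambda R^{-1}=R$ we get $\lambda t=s$, and since $s,t\in\F_2$, if $t=1$ then $\lambda=s\in\F_2$, so $\lambda=1$.

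The remaining case $t=0$ is where the real difficulty lies. Here I would use the other elementary symmetric functions: $\lambda^k e_k(R^{-1})=e_k(R)\in\F_2$. If some $e_k(R^{-1})\neq0$ with $k$ coprime to the order of $\lambda$, this gives $\lambda^k=1$, and combined with $\lambda^d=1$ it forces $\lambda=1$. Since $e_d(R^{-1})=1$, I need to choose the indices $k$ carefully so that they generate the order of $\lambda$.
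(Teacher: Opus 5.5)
Your computation of $|S|$ is fine. Your reduction of the obstruction to the relation $\lambda R^{-1}=R$ is also correct: you intersect the $\lambda$-eigenspace $E$ of $x$, which has codimension $\supp(x)<m$, with the $m$-dimensional eigenspaces $V_\beta(b)$. This is a genuinely different route from the paper's. The paper applies $\rank(f(U)-f(V))\le e\,\rank(U-V)$ to $p(b^{-1})-p(a)$, where $p(b^{-1})$ is invertible because $p$ and $p^\vee$ are coprime, and $b^{-1}-a=(I-x)b^{-1}$; this gives $\rank(x-I)\geq m$.

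The gap is that you never derive the contradiction, and this last step is exactly where the hypothesis $R\neq R^{-1}$ must be used. You sketch two completions: showing $\lambda/\sigma(\lambda)=1$ by a norm or trace argument, or forcing $\lambda^k=1$ for enough $k$ via elementary symmetric functions. Both try to prove $\lambda R^{-1}=R\Rightarrow\lambda=1$ without that hypothesis, and that implication is false in general. For example, $p=T^6+T^3+1$ is irreducible over $\F_2$ with roots the primitive ninth roots of unity $\zeta^j$. Then $\zeta^3R^{-1}=R$ with $\zeta^3\neq1$, and the only $k$ with $e_k(R)\neq0$ are $3$ and $6$. So as framed, neither route can succeed.

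The step closes in one line once you aim for $R=R^{-1}$ rather than for $\lambda=1$. Since $\sigma(\lambda)=\lambda^2$, the element $\lambda/\sigma(\lambda)$ that you showed stabilizes $R^{-1}$ is $\lambda^{-1}$. Hence $R^{-1}=\lambda R^{-1}=R$, a contradiction. Equivalently, $\lambda^k e_k(R^{-1})=e_k(R)$ with both $e_k$ in $\F_2$ and $\lambda\neq0$ forces $e_k(R)=e_k(R^{-1})$ for every $k$, that is, $p=p^\vee$.

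Alternatively, as the paper does, use information about $x$ that you did not exploit. We have $\dim E=n-\supp(x)>n-m\geq n/2$. Since $x$ has entries in $\F_2$, the Frobenius image of $E$ is the $\lambda^2$-eigenspace and has the same dimension. Two distinct eigenspaces cannot both exceed half the dimension, so $\lambda=\lambda^2=1$ and $R=R^{-1}$.

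With the first fix, your argument even yields $\rank(x-\lambda I)\geq m$ for every $\lambda\in\overline{\F}_2^\times$ directly. This bounds $\supp(x)$ without the paper's half-dimension step.
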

$\GL_n(2) = \PSL_n(2)$ is a finite simple group. Note that 
there are two natural parameter ranges here. Taking \(m=3\) and letting
\(d\to\infty\) gives \(\log|S|/\log|\GL_{3d}(2)|=1-1/d+O(d^{-2})\to1\), while
all support 2 elements remain uncovered by $S^2$. Alternatively, for any $\epsilon>0$ one can fix \(d\) sufficiently large so that \(\log|S|/\log|\GL_{3d}(2)|=1-1/d+O(d^{-2}) >1-\epsilon \) and let \(m\to\infty\), to get a conjugacy class of size at least \(|\GL_n(2)|^{1-\epsilon}\) whose square
misses every element of support below \(n/d\), which is linear in the dimension. This shows that excluding constant supports as an alternative conjecture is not the `right' solution.

\paragraph{Approximate covering with $SS^{-1}$.}
A natural replacement for $S^2$ is the difference product \(SS^{-1}\), which
always contains the identity. Our main result shows that for linear groups over $\F_2$, this variant of Shalev's conjecture indeed holds.

\begin{samepage}
\begin{theorem}
\label{thm:main-binary}
There exist \(\epsilon>0\) and
\(N\in\mathbb{N}\) such that the following holds.  Let \(n\geq N\), and let
\(S\subseteq\GL_n(2)\) be a
conjugacy class satisfying
   $|S|\geq|\GL_n(2)|^{1-\epsilon}$.
Then
\begin{equation}
   \{x\in\GL_n(2):\supp(x)\neq1\}\subseteq SS^{-1}.
   \label{eq:intro-main-binary}
\end{equation}
\end{theorem}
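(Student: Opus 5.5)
We plan to argue character-theoretically, via the Frobenius formula: for a conjugacy class \(S\) and \(x\in G=\GL_n(2)\), the number of pairs \((s,t)\in S\times S\) with \(st^{-1}=x\) equals
\[
   \frac{|S|^2}{|G|}\sum_{\chi\in\Irr(G)}\frac{|\chi(s)|^2\,\chi(x^{-1})}{\chi(1)},
\]
because \(\overline{\chi(s)}=\chi(s^{-1})\). The key feature of \(SS^{-1}\), as opposed to \(S^2\), is that every coefficient \(|\chi(s)|^2\) is nonnegative. So it suffices to show that
\[
   \sum_{\chi\neq 1}\frac{|\chi(s)|^2\,|\chi(x)|}{\chi(1)}<1 .
\]
A direct character-bound approach will not give this for \(x\) of small support. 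We therefore split into two regimes according to \(\supp(x)\). The threshold is \(\supp(x)\ge \delta n\) for a small constant \(\delta\) to be chosen.

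\textbf{Large support.} Here we use exponential character bounds of Larsen--Tiep / Bezrukavnikov--Liebeck--Shalev--Tiep type, \(|\chi(g)|\le \chi(1)^{1-c\,\supp(g)/n}\). The hypothesis \(|S|\ge|G|^{1-\epsilon}\) forces the centralizer of \(s\) to have size \(\le |G|^{\epsilon}=2^{O(\epsilon n^2)}\). Over \(\F_2\) this forces \(\supp(s)\ge (1-O(\sqrt\epsilon))n\). Indeed, a large eigenspace of dimension \(k\) gives a centralizer of size at least \(2^{k^2}\), so \(\supp(s)=n-O(\sqrt{\epsilon}\,n)\). Then \(|\chi(s)|^2\le\chi(1)^{2-2c+O(\sqrt\epsilon)}\) and \(|\chi(x)|\le\chi(1)^{1-c\delta}\). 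We hope to get a total exponent below \(0\), after which the Witten zeta function \(\sum_\chi\chi(1)^{-\eta}\to1\) bound from Liebeck--Shalev finishes this case. Since the constants from the general bound are not sharp enough, we would instead use the sharper bound of Guralnick--Larsen--Tiep for classes of support near \(n\): \(|\chi(s)|\le\chi(1)^{O(\sqrt\epsilon)}\cdot\) small. Squaring this makes the \(|\chi(s)|^2\) factor negligible. We then combine it with any nontrivial bound on \(\chi(x)\) for \(\supp(x)\ge\delta n\), which yields the estimate above.

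\textbf{Small support \(2\le\supp(x)<\delta n\).} Character sums are too weak here, so we would argue constructively. Write \(x=\lambda I+y\) with \(\lambda=1\) (over \(\F_2\) the eigenvalue of the large eigenspace is rational), so \(x\) is the identity plus an element of rank \(r=\supp(x)\) on a decomposition \(V=U\oplus W\) with \(\dim U=O(r)\). The goal is to find \(s\in S\) with \(s^{-1}x s'\in S\)... equivalently, \(x=s t^{-1}\) with \(s,t\) conjugate. Since \(s\) has no large eigenspace, its rational canonical form has many cyclic blocks or one large cyclic part. We would choose a decomposition of \(V\) into an \(s\)-invariant piece \(V_1\) of bounded-times-\(r\) dimension and a complement, and perturb \(s\) only on a small subspace. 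Concretely, we look for \(t=x^{-1}s\) and require \(x^{-1}s\sim s\). Using the result of Uhlig/Nielsen-type lemmas (the obstruction occurs only at support 1), we reduce this to a question inside \(\GL_{m}(2)\) with \(m=O(r)\), namely realizing a support-\(r\) element as a commutator-like product \(s_0t_0^{-1}\) with \(s_0,t_0\) conjugate and with prescribed rational canonical form. This is a finite linear-algebra statement: every \(x\) of support \(\ge2\) is \(s_0 t_0^{-1}\) for cyclic-type \(s_0\). Cyclic matrices give \(t_0=x^{-1}s_0\) with a prescribed characteristic polynomial via companion-matrix perturbations. We would prove it by induction on \(r\), starting from \(r=2\).

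The main obstacle is the small-support regime, and in particular gluing the local construction to arbitrary \(s\). If \(s\) has many repeated small invariant factors, as in \cref{thm:intro-f2-obstruction}, we need \(SS^{-1}\), rather than \(S^2\), to absorb the rank-\(r\) perturbation. So the argument must use that \(t\) is built from \(s\) itself via a rank-\(r\) modification preserving the similarity class. For this we would exploit that the centralizer of \(s\) acts with few orbits on small-rank perturbations. Pinning down the constant \(\delta\) so that both regimes overlap is the second delicate point.
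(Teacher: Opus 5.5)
Your large-support regime is essentially the paper's \cref{prop:binary-fixed-fraction-mixing}: Guralnick--Larsen--Tiep for the source, Larsen--Tiep for the target, and Liebeck--Shalev's zeta bound. The small-support regime, however, has a genuine gap. The reduction ``pick an invariant summand \(V_1\) of dimension \(O(r)\) for the source representative \(a\in S\), and solve there'' is unavailable for many large classes, and where it is available you give no rule for choosing \(V_1\).

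Two examples show this. If \(a=C(p)\) with \(\deg p=n\), a Singer-type class and among the largest, then \(a\) has no proper invariant subspace at all. Now take \(a=J_2(1)^{\oplus k}\oplus C(p)\) with \(k\geq2\) fixed and \(p\) irreducible of degree \(n-2k\). Its centralizer has order below \(2^{2k^2+n}\), so the class is large. Every invariant subspace of dimension \(<n-2k\) lies in the \((T+1)\)-primary part, where \(a\) is an involution. Then every \(s_0t_0^{-1}\) is a product of two involutions, hence conjugate to its inverse, so the support-\(3\) target \(C(T^3+T+1)\oplus I\) is never reached this way.

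So you cannot avoid proving directly that the class of a single cyclic-primary block \(C(p^k)\), in unbounded dimension, satisfies \(x\in SS^{-1}\) for all \(x\) with \(\supp(x)\geq2\) and arbitrarily small relative support. This is \cref{thm:primary-cyclic-difference}, the main technical input of the paper, and ``companion-matrix perturbations, induction on \(r\)'' does not supply it. Uhlig's lemma is an obstruction, not a reduction tool: it shows that the class of \(C(p)\) misses every transvection of \(\GL_n(2)\), so any construction must use \(\supp(x)\geq2\) in an essential way. The paper needs the following ingredients:
\begin{itemize}
\item B\"unger--Nielsen for nonprimary targets;
\item the layer-lifting \cref{prop:add-primary-layer};
\item the Pieri-inversion bound \cref{prop:primary-small-level}, because a generic bound \(|\chi(a)|\leq\chi(1)^{\eta}\) cannot beat the saving \(\chi(1)^{-\sigma B/n}\) available when \(\supp(x)\) is a large constant \(B\);
\item when the exponent \(k\) is small and \(\deg p\) is large, positivity of the Frobenius sum at \(C(p)\), via Green's formula, nonnegativity of unipotent characters on unipotent elements, and the exact ratio \cref{lem:primary-exact-ratio}.
\end{itemize}

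When all blocks are small, the second missing idea is bookkeeping that keeps the restricted source class large. Note that \(m=O(r)\) is unbounded once \(r\) grows with \(n\), so there is no finite statement to prove by induction. The paper sets \(E(a)=\dim C_{\operatorname{Mat}_m(2)}(a)-m\). The identity \(\sum_i\Delta_i=2E(a)\) lets it cancel one block at a time, using it identically in both factors, without increasing \(E(a)/m^2\). It stops when \(2r<m\leq4r\). At that point the padded target has support at least \(m/4\), and the \emph{large-support} estimate applies again in dimension \(m\). So the two regimes are chained, not overlapped by tuning \(\delta\).

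You also need a pigeonhole step. Suppose \(r<B\) and every block had dimension \(<2B\). There are only finitely many such block types, so repetitions would force \(E(a)>\epsilon n^2\), contradicting the class-size hypothesis. This step is what routes bounded supports into the single-large-block case, where \cref{thm:primary-cyclic-difference} is unavoidable.
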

\end{samepage}
This repair, however, does not hold over every finite field, as is shown by the
following example. We write \(a^G\) for the conjugacy class of \(a\) in \(G\).
\begin{theorem}
\label{thm:intro-f8-obstruction}
Let \(d\geq4\) be even, and let \(m\geq3\) be such that \(7\nmid dm\).
Put \(n=dm\) and \(G=\SL_n(8)=\PSL_n(8)\).
There is a monic irreducible polynomial \(p\in\F_8[T]\) of degree \(d\)
such that the conjugacy class
\[
   S:=\bigl(C(p)^{\oplus m}\bigr)^G\subseteq G
\]
is real and satisfies $\log|S|/\log|G|
      =1-\frac1d+O(n^{-2})$. However, for every integer \(s\) with \(2\leq s<m\), there is an
element \(x_s\in G\) such
that
\(
   \supp(x_s)=s\) and \(
   x_s\notin SS^{-1}.
\)
\end{theorem}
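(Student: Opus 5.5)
The plan is to make $S$ completely explicit and then obstruct $x_s\in SS^{-1}$ with a Uhlig-type invariant-subspace argument. Since $p$ is irreducible, a matrix $g\in\GL_n(8)$ is conjugate to $C(p)^{\oplus m}$ if and only if $p(g)=0$. Any such $g$ makes $V=\F_8^n$ into an $m$-dimensional vector space over $K=\F_8[g]\cong\F_{8^d}$.

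\textbf{Step 1: choosing $p$ and the basic properties of $S$.} I will choose $p$ to be a self-reciprocal irreducible polynomial of even degree $d$ over $\F_8$; such polynomials exist for every even $d$ by the classical results on self-reciprocal irreducibles. Then $p(0)=1$, so $\det C(p)^{\oplus m}=1$ and $S\subseteq\SL_n(8)$. The roots of $p$ are closed under inversion, so $C(p)^{-1}$ is conjugate to $C(p)$ in $\GL_n(8)$. The $\GL_n(8)$-class does not split in $\SL_n(8)$, because the centralizer $\GL_m(8^d)$ has determinant map (the norm) surjective onto $\F_8^\times$. Hence $S$ is a single real $G$-class. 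The hypothesis $7\nmid n$ makes the center of $\SL_n(8)$ trivial, so $G=\PSL_n(8)$. The size estimate follows from $|S|=|\GL_n(8)|/|\GL_m(8^d)|$: the logarithms are $n^2\log 8+O(1)$ and $m^2 d\log 8+O(1)$, which gives $1-1/d+O(n^{-2})$.

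\textbf{Step 2: the key lemma.} I will show that if $\lambda\in\F_8^\times$ satisfies $p(\lambda T)\neq\lambda^d p(T)$, then no $x\in SS^{-1}$ has a $\lambda$-eigenspace $W$ of codimension $s<m$.

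Suppose instead that $x=gh^{-1}$ with $g,h\in S$, and put $U=h^{-1}W$, which has codimension $s$. For $u\in U$ we have $gu=xhu=\lambda hu$. Let $U'=\bigcap_{k}h^kU$ be the largest $h$-invariant subspace of $U$. It is a $K$-subspace for the $h$-structure and has $\F_8$-codimension at most $ds<dm=n$, so $U'\neq0$. On $U'$ we have $g=\lambda h$, so $U'$ is also $g$-invariant, and $p(\lambda h)|_{U'}=0$. Thus $\lambda\alpha$ is a root of $p$ for some root $\alpha$ of $p$. Since $p$ is irreducible, this forces $p(\lambda T)=\lambda^d p(T)$, a contradiction.

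\textbf{Step 3: finding such a $\lambda$.} The set of $\lambda\in\F_8^\times$ with $p(\lambda T)=\lambda^d p(T)$ is a subgroup of the cyclic group $\F_8^\times$ of order $7$. If it were the whole group, $p$ would be a polynomial in $T^7$, forcing $7\mid d$. This is excluded by $7\nmid dm$. So the subgroup is trivial, and any $\lambda\neq1$ works.

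\textbf{Step 4: constructing $x_s$.} I will take
\[
x_s=\lambda I_{n-s}\oplus\operatorname{diag}(1,\dots,1,\mu)
\]
with $\mu=\lambda^{-(n-s)}$. Then $\det x_s=1$. If $\mu=\lambda$, I will instead use a $2\times2$ block with determinant $\lambda^{2}\mu$ and no eigenvalue $\lambda$. Since $s<m\le n/4$, the $\lambda$-eigenspace, of dimension $n-s$, is the largest eigenspace, so $\supp(x_s)=s$. By Step 2, $x_s\notin SS^{-1}$.

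The main obstacle I expect is Step 2, specifically justifying that the $h$-invariant core $U'$ of $U$ is nonzero and $g$-invariant. The remaining steps are the existence of self-reciprocal irreducibles and routine bookkeeping.
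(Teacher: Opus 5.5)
Your proof is correct. Its overall architecture matches the paper's: a self-reciprocal $p$ of even degree, the repeated block $C(p)^{\oplus m}$, the obstruction $\rank(x-\lambda I)\geq m$ for all $x\in SS^{-1}$ and $\lambda\neq1$, and a diagonal $x_s$ whose $\lambda$-eigenspace has dimension $n-s$. The difference is in how you prove the key rank bound.

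The paper writes $x=XY^{-1}$ and notes that $p(T)$ and $p(\lambda T)$ are coprime, since comparing constant terms would give $\lambda^d=1$. B\'ezout then makes $p(\lambda Y)$ invertible, and the telescoping estimate of \cref{lem:polynomial-rank} gives $n=\rank\bigl(p(\lambda Y)-p(X)\bigr)\leq d\,\rank(x-\lambda I)$. You instead pass to the $h$-invariant core $U'$ of $U=\ker(g-\lambda h)$ and read off an eigenvalue relation. The step you flag as the main obstacle is immediate. We have $U'=\bigcap_{k=0}^{d-1}h^{-k}U$, because this intersection is already $h$-stable ($h^d$ is an $\F_8$-combination of $I,h,\ldots,h^{d-1}$). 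Hence $\operatorname{codim}U'\leq ds<n$.

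The two arguments are really dual. The telescoping identity shows precisely that $p(\lambda Y)-p(X)$ vanishes on this same core, so both rest on the same codimension count and the same coprimality.
\begin{itemize}
\item The paper's packaging is shorter, and it reuses a lemma the paper also needs for the $\F_2$ obstruction (with $p^\vee$ in place of $p(\lambda T)$).
\item Yours makes the mechanism visible: a nonzero subspace on which $g=\lambda h$ forces multiplication by $\lambda$ to preserve the roots of $p$.
\item Your subgroup argument in Step~3 is a slightly longer route to the paper's constant-term comparison.
\item Citing the existence of self-reciprocal irreducibles is fine. The paper's explicit choice, the minimal polynomial of an element of order $8^{d/2}+1$, gives $\deg p=d$, $p=p^\vee$ and $\det C(p)=1$ at once.
\end{itemize}

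One slip to fix in Step~4. When $\mu=\lambda$, the $2\times2$ block replacing $\operatorname{diag}(1,\mu)$ must have determinant $\mu=\lambda^{-(n-s)}$, not $\lambda^2\mu$. For instance, $\operatorname{diag}(\beta,\mu\beta^{-1})$ with $\beta\notin\{1,\lambda\}$ works. This is essentially the paper's block $\operatorname{diag}(\mu,\nu)$, which the paper uses in every case.
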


As in~\cref{thm:intro-f2-obstruction}, taking \(m=3\) and \(d\to\infty\) gives
a real conjugacy class $S$ with \(\log|S|/\log|G|\to1\), for which \(SS^{-1}\) misses some elements of
support 2. Keeping \(d\) fixed and letting \(m\to\infty\), we obtain $S$ with $|S|>|\SL_n(8)|^{1-\epsilon}$ such that \(SS^{-1}\) misses
elements of every support \(2\leq s<n/d\), so the non-covered elements may have supports linear
in \(n\).

\medskip In the other direction, we show that the assertion of Theorem~\ref{thm:main-binary} holds over any finite field for the family of  
conjugacy classes represented by \(C(p^r)\), where \(p\) is irreducible and
\(n=r\deg p\). We call this class a cyclic-primary class.
\begin{theorem}
\label{thm:intro-primary-cyclic-difference}
There exists \(N \in \mathbb{N}\) with the following
property.  Let \(q\) be a prime power, let \(p\in\F_q[T]\) be monic
irreducible, let \(r\geq1\), and put
\(d=\deg p\), \(n=dr\geq N\), and
\(S=C(p^r)^{\PGL_n(q)}\).  Then
\[
   \{x\in\PSL_n(q):\supp(x)\neq1\}
      \subseteq SS^{-1}.
\]
Moreover, if \(r\geq2\), then
\(SS^{-1}=\PSL_n(q)\).
\end{theorem}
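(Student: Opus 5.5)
We prove \cref{thm:intro-primary-cyclic-difference} with the Frobenius character formula for products of classes. Fix \(g\in S\). Then \(x\in SS^{-1}\) if and only if
\[
   N(x):=\sum_{\chi\in\Irr(G)}\frac{|\chi(g)|^2\,\chi(x)}{\chi(1)}\neq0,
\]
since \(SS^{-1}=S\overline{S}\) and \(\overline{\chi(g)}=\chi(g^{-1})\). We work in \(G=\GL_n(q)\) and then descend, because \(S\) is a \(\PGL_n(q)\)-class. Any scalar ambiguity can be absorbed, because multiplying by a central element shifts \(x\) by a scalar, which does not change \(\supp(x)\).

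The key structural input is that \(g=C(p^r)\) is \emph{regular}, with centralizer \(C_G(g)\cong(\F_{q^d}[T]/(T^r))^\times\). This group is abelian of order \(q^{n-d}(q^d-1)\). Column orthogonality gives
\[
   \sum_\chi|\chi(g)|^2=|C_G(g)|\le q^{n}.
\]
This makes the weights \(|\chi(g)|^2\) tiny on average relative to \(|G|\). Moreover, for regular semisimple or regular elements, Deligne--Lusztig theory gives \(|\chi(g)|\) bounded by a constant depending only on \(r\), and \(\chi(g)\neq0\) only for \(\chi\) in a controlled set of Lusztig series, namely those whose semisimple label is compatible with a torus containing the semisimple part of \(g\). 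I would isolate the trivial character and the other characters with \(\chi(1)\) small, namely linear characters and their products with the unipotent characters of small degree \(\sim q^{n-1}\), and bound the remainder by
\[
   \Bigl|\sum_{\chi(1)>D}\frac{|\chi(g)|^2\chi(x)}{\chi(1)}\Bigr|
   \le |C_G(g)|\max_{\chi(1)>D}\frac{|\chi(x)|}{\chi(1)}.
\]
The remainder is then controlled by a character ratio bound for \(x\) of large support. The standard Bezrukavnikov--Liebeck--Shalev--Tiep / Larsen--Shalev--Tiep type bound is \(|\chi(x)|/\chi(1)\le\chi(1)^{-c\,\supp(x)/n}\). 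For \(\supp(x)\ge \delta n\) this beats \(q^{n}\) once \(\chi(1)\ge q^{Cn}\). For the characters of degree between \(q^{n-1}\) and \(q^{Cn}\), which lie in a few explicit families, I would compute \(\chi(g)\) directly. For example, the character \(\chi\) of degree \(q(q^{n-1}-1)/(q-1)\) inside the permutation character on \(\mathbb P^{n-1}\) satisfies \(\chi(g)=\#\{\text{fixed lines}\}-1\in\{-1,0\}\), so \(|\chi(g)|\le 1\). Hence these terms contribute \(O(q^{-(n-1)}\cdot|\chi(x)|)\), which is dominated by the main term \(|G^{ab}|\)-type contribution coming from the linear characters.

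The main obstacle is elements of small support \(s\ge2\), where the character ratios are only \(q^{-O(s)}\) and the crude bound fails. For these I would switch to a direct linear-algebraic argument, in the spirit of the Uhlig-type obstruction but reversed. Write \(x=\lambda(I+u)\) with \(\rank u=s\). We want \(h\in S\) with \(xh\in S\), that is, \(xh\) conjugate to \(C(p^r)\). Since \(C(p^r)\)-conjugacy is equivalent to being cyclic with characteristic polynomial \(p^r\), it suffices to find a cyclic \(h\) with \(\chi_h=p^r\) such that \(\lambda(I+u)h\) is also cyclic with characteristic polynomial \(p^r\). Choose a basis adapted to \(\operatorname{im}u\) and take \(h\) to be a companion-type matrix whose last \(s\) columns are free parameters. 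The rank-\(s\) perturbation then alters only \(s\) coordinates of the characteristic polynomial, and with \(s\ge2\) we have enough freedom to match all coefficients and the determinant condition \(\lambda^n\det(I+u)=1\), which holds after central adjustment. This is exactly where support \(1\) fails, since only one coefficient can then be controlled. Cyclicity is preserved by a Zariski-open condition that can be checked by counting over \(\F_q\) for \(n\ge N\).

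For \(r\ge2\), support-\(1\) elements should also be reachable, because \(C(p^r)\) has a nontrivial unipotent part and a transvection-type perturbation suffices. For example, \(C(p^r)\) and \(C(p^r)t\) for suitable transvections \(t\) are both cyclic with the same characteristic polynomial. Conjugating this shows that every support-\(1\) element lies in \(SS^{-1}\), giving \(SS^{-1}=\PSL_n(q)\).
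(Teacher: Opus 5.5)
\textbf{There is a genuine gap: targets of small support.} This is exactly where the theorem is hard.

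As you set it up, your character estimate only handles $\supp(x)\ge\delta n$. The bound $|C_G(g)|\max_{\chi(1)>D}|\chi(x)|/\chi(1)$ is useful only when $\chi(1)^{c\,\supp(x)/n}$ beats $q^n$. Even in that range you need a uniform bound on $|\chi(g)|$ for all characters of bounded level, not just the level-one example. Your claim that Deligne--Lusztig theory bounds $|\chi(g)|$ in terms of $r$ alone is also wrong. At $C(p)$, a cuspidal character of $\GL_n(q)$ takes the value $\pm\sum_{i<n}\theta(\alpha^{q^i})$, which can have absolute value $n$.

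So every target with $2\le\supp(x)<\delta n$ rests on your linear-algebra paragraph, and that paragraph is a parameter count, not an argument.
\begin{itemize}
\item Its concrete claim is false. A rank-one change $h\mapsto h+vw^{\mathsf T}$ multiplies the characteristic polynomial by $1-w^{\mathsf T}(T-h)^{-1}v$, and in general this moves every coefficient.
\item More fundamentally, counting free parameters, or noting that cyclicity is a Zariski-open condition, does not produce $\F_q$-rational solutions uniformly in $q$. For $q=2$ and $n\to\infty$ there is no Lang--Weil input, because the complexity of the relevant varieties grows with $n$.
\end{itemize}

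The case $r=1$, $\supp(x)=1$ shows this is not a technicality. Over $\overline{\F}_q$, let $b=\operatorname{diag}(\alpha_1,\ldots,\alpha_n)$ with the $\alpha_i$ the roots of $p$. Then $a=b+e_1e_2^{\mathsf T}$ is conjugate to $b$, and $ab^{-1}$ is a transvection. Over $\F_q$, however, no transvection lies in $C(p)^{\GL_n(q)}\bigl(C(p)^{\GL_n(q)}\bigr)^{-1}$. Indeed, $a-b=vw^{\mathsf T}$ together with equal characteristic polynomials forces $w^{\mathsf T}b^kv=0$ for all $k$. This is impossible, since every nonzero $v$ is cyclic for $b$. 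So the support-one obstruction is arithmetic, not a shortage of adjustable coefficients, and your heuristic gives no real evidence for $s\ge2$.

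\textbf{How the paper closes this gap.} It works in two steps.

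First, it pushes the Frobenius argument down to an absolute constant $B$. The input is the new bound $|\chi(C(p^r))|\le 2^jq^{j/2}$ for $\ell(\chi)=j<n/2$ (\cref{prop:primary-small-level}). This comes from Pieri inversion together with the fact that the invariant subspaces of $C(p^r)$ form the chain $\ker p(a)^h$. Summing over levels against the Larsen--Tiep ratio gives \cref{prop:constant-support-reduction}.

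Second, for $\supp(x)<B$, it applies B\"unger--Nielsen to nonprimary targets. Primary targets of degree at least two have support at least $n/2$, so it reduces to unipotent $u$ and builds factorizations explicitly:
\begin{itemize}
\item when $d=1$, by \cref{lem:regular-unipotent-difference};
\item when $d\ge2$ and $r>s$, by the layer-lifting \cref{prop:add-primary-layer}. Its proof is exactly the rational count you are missing: at most a proportion $2q^{-d}\le1/2$ of the blocks $W$ are bad.
\item when $r\le s$, so that $d$ is large, by a Frobenius computation for $C(p)$ in $\GL_d(q)$, followed by lifting. There only primary hooks survive, and \cref{lem:primary-exact-ratio} gives a common exact ratio for cuspidal degree $>s$.
\end{itemize}

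Your $r\ge2$ paragraph has the same defect. The transvection claim is only asserted; the paper gets it from \cref{lem:general-q-small-unipotents} with $s=1$, or from \cref{lem:regular-unipotent-difference} when $d=1$. Support-one targets that are not scalar multiples of transvections, such as $\lambda\operatorname{diag}(\mu,1,\ldots,1)$ with $\mu\neq1$, are not addressed at all. They are nonprimary, and the paper covers them by B\"unger--Nielsen.
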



\paragraph{Covering with squares of normal sets.} 
A normal set in a group is a union of conjugacy classes.
In his talk at the 2021 European Congress of Mathematicians, Shalev~\cite[Question 1.5]{ShalevEMS2023} asked whether 
for any sufficiently large finite nonabelian simple group \(G\), every inverse-closed
normal subset \(S\) with a positive density satisfies $S^2=G$. In the 2022 edition of the Kourovka notebook, Pyber~\cite[Problem~20.74(a)]{Kourovka2026} conjectured that the same assertion holds under the weaker assumption $|S|>|G|/\log_2|G|$.   
Pyber's conjecture was proved for alternating groups by
Larsen and Tiep~\cite{LarsenTiepAlternating}, and for groups of Lie type
of bounded rank by Skresanov~\cite{Skresanov2025}. 

Our next result disproves Pyber's conjecture and answers on the negative Shalev's question.
\begin{theorem}\label{thm:intro-normal}
    For every odd \(n\geq5\), there exists an inverse-closed normal subset \(S\subseteq \SL_n(5)=\PSL_n(5)\) such that $|S| \geq |\SL_n|/960$, but $\operatorname{diag}(1,-I_{n-1})\notin S^2$. 
\end{theorem}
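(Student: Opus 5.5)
The plan is to find an invariant of conjugacy classes, defined through values of characteristic polynomials, that rules out the factorisation $x=ab$ with $a,b\in S$. The key observation is that $x=\operatorname{diag}(1,-I_{n-1})$ has determinant $1$ because $n-1$ is even, and that $-x=\operatorname{diag}(-1,I_{n-1})=:r$ is a reflection, i.e.\ $r=I+uv^{T}$ with $v^{T}u=-2$. If $ab=x$, then $b=a^{-1}x=-a^{-1}r$, so $-b$ is a rank-one perturbation of $a^{-1}$. The matrix determinant lemma then gives
\[
\det\bigl(tI+b\bigr)=\det\bigl(tI-a^{-1}r\bigr)=\det\bigl(tI-a^{-1}\bigr)\bigl(1-v^{T}(tI-a^{-1})^{-1}a^{-1}u\bigr).
\]
Evaluating at $t\in\F_5^\times$ (and at $t=0$, where $\det(-a^{-1}r)=-\det(-a^{-1})$ fixes the sign twist) links the values $\chi_b(-t)$ to $\chi_{a^{-1}}(t)$. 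Here $\chi_{a^{-1}}(t)$ is expressed through $\chi_a(t^{-1})$ via $\chi_{g^{-1}}(t)=(-t)^n\chi_g(t^{-1})/\det g$, with $\det g=1$ and $n$ odd.

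I would choose $S$ as the set of $g\in\SL_n(5)$ whose characteristic polynomial values $\chi_g(\lambda)$, for $\lambda\in\F_5^\times=\{\pm1,\pm2\}$, lie in prescribed subsets of $\F_5$, for instance prescribed quadratic-residue classes or nonvanishing conditions. The sets are chosen so that:
\begin{enumerate}[label=(\roman*)]
\item the condition is invariant under $g\mapsto g^{-1}$, using the inversion formula above, where $\lambda\mapsto\lambda^{-1}$ swaps $2$ and $3$ and fixes $\pm1$, and $(-\lambda)^n$ contributes a sign since $n$ is odd;
\item the relation between $\chi_b(-\lambda)$ and $\chi_{a}(\lambda^{-1})$ forced by $ab=x$ is incompatible with both $a$ and $b$ satisfying the condition.
\end{enumerate}
Since $\lambda\mapsto-\lambda$ composed with inversion permutes $\F_5^\times$ in a specific way, the contradiction should come from a parity or residue mismatch. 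The correction factor $1-v^{T}(\cdot)u$ is the one uncontrolled quantity, so I would aim to make the constraint kill it. One way is to use two evaluation points at which the correction factors are tied together. Another is to impose that $a$ has a specific eigenvalue structure at $\pm1$, so that $tI-a^{-1}$ is singular there; then the perturbation relation forces $-b$ to have eigenvalue $t$ with a controlled multiplicity change. Normality is automatic because the conditions depend only on the characteristic polynomial.

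For the density bound, I would use the standard equidistribution fact: for fixed $q$ and $n\to\infty$ (and uniformly for $n\ge5$ with explicit error), the characteristic polynomial of a uniform element of $\SL_n(q)$ has values at finitely many fixed points of $\F_q$ that are nearly uniformly distributed. The main deviation is at the point of vanishing, where the probability is governed by $\prod_i(1-q^{-i})$-type factors. The constant $960$ should then come out as a crude product of the probabilities of the individual constraints (each a factor like $2/5$, $1/4$, or a lower bound for the no-eigenvalue probability), and I would bound each factor from below explicitly for all odd $n\ge5$.

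The main obstacle is step (ii): choosing the constraint set so that the rank-one correction factor cannot repair the mismatch. I would first try the cleanest version, namely requiring that elements of $S$ have no eigenvalue $\pm1$ and prescribed quadratic characters of $\chi_g(\pm2)$. I would check by the determinant lemma whether $ab=x$ forces a quadratic-character product to equal a fixed nonresidue. If the correction factor still escapes, I would enlarge the set of evaluation points or impose a constraint on the Jordan structure at $-1$ instead, since $x$ has a large $(-1)$-eigenspace of dimension $n-1$.
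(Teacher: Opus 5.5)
\textbf{There is a genuine gap, and the main route cannot succeed.} No nonempty $S\subseteq\SL_n(5)$ defined purely through the characteristic polynomial can have $x=\operatorname{diag}(1,-I_{n-1})\notin S^2$. This includes any condition on the values $\chi_g(\lambda)$, $\lambda\in\F_5^\times$. The rank-one correction factor you call uncontrolled is in fact completely free.

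Let $h$ be the characteristic polynomial of some element of $S$. Then the companion matrix $a=C(h)$ has determinant $1$ and lies in $S$, and $M=a^{-1}$ is cyclic. Take $u$ a cyclic vector of $M$ and any $v$ with $v^{\mathsf T}u=-2$. Then $x'=-(I+uv^{\mathsf T})$ is conjugate to $x$, and the determinant lemma gives
\[
   \chi_{a^{-1}x'}(t)=-\det(tI+M)-t\,v^{\mathsf T}\operatorname{adj}(tI+M)\,u .
\]
Because $u$ is cyclic, $v\mapsto v^{\mathsf T}\operatorname{adj}(tI+M)u$ is a linear bijection onto polynomials of degree at most $n-1$, and its $t^{n-1}$-coefficient is $v^{\mathsf T}u$. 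So $\chi_{a^{-1}x'}$ runs over \emph{all} monic degree-$n$ polynomials with constant term $(-1)^n=-1$. These are exactly the characteristic polynomials of elements of $\SL_n(5)$, and in particular $h$ is among them. Choosing $v$ so that $\chi_{a^{-1}x'}=h$ gives $b=a^{-1}x'\in S$ and $x'=ab\in S^2$. Hence no choice of residue classes, evaluation points or nonvanishing conditions can work. Your density analysis via equidistribution of characteristic-polynomial values is therefore aimed at the wrong kind of set.

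\textbf{The missing idea.} You need a condition invisible to the characteristic polynomial that makes every element of $S$ non-cyclic. Your passing remark about eigenvalue structure at $\pm1$ is where this belongs. The paper takes
\[
S=\{A\in\SL_n(5):\dim\ker(A-I)\geq2,\ A+I\text{ invertible}\},
\]
which is normal and inverse-closed, so $S^2=SS^{-1}$. For $g=AB^{-1}$ with $A,B\in S$, on $U=\ker(A-I)$ one has $(A+B)v=(I+B)v$. Hence $A+B$ is injective on $U$, and $\rank(g+I)=\rank(A+B)\geq2$, while $\rank(x+I)=1$.

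In your notation: the reflection $-x$ fixes a hyperplane. That hyperplane meets the at least two-dimensional fixed space of $a^{-1}$, so $b=-a^{-1}(-x)$ acquires the eigenvalue $-1$.

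The density comes from counting $A=I_U\oplus D$ with $\dim U=2$ and $D\in\SL_{n-2}(5)$ having no eigenvalue $\pm1$. A first-moment count of eigenlines shows at least half of $\SL_{n-2}(5)$ qualifies. This gives $|S|/|\SL_n(5)|\geq 1/(2|\GL_2(5)|)=1/960$.
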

The construction extends to every fixed \(q>2\), with
a positive density depending only on \(q\).

\paragraph{Proof strategy for the main theorem.} We conclude the introduction by describing the main ideas behind our proof of Theorem~\ref{thm:main-binary}.
When asking whether \(x\in S^2\) or \(x\in SS^{-1}\), we call \(S\) the
\emph{source class} and \(x\) the \emph{target element}.

\medskip 

The proof of \cref{thm:main-binary} has three steps.  The first two steps
establish \cref{thm:intro-primary-cyclic-difference}, mainly via character bounds. The third step combines \cref{thm:intro-primary-cyclic-difference} with the Guralnick-Larsen-Tiep character
bound \cite[Theorem~1.3]{GLTClassical} and an explicit construction to get~\cref{thm:main-binary}.

We use the Frobenius criterion: for \(S=a^{\GL_n(q)}\) and
\(x\in\SL_n(q)\),
\[
   x\in SS^{-1}
   \quad\Longleftrightarrow\quad
   \sum_{\chi\in\Irr(\GL_n(q))}
      \frac{|\chi(a)|^2\chi(x^{-1})}{\chi(1)}>0.
\]

\begin{enumerate}[leftmargin=2.5em,label=\textbf{\arabic*.}]
\item \emph{Large support.}
      The linear characters give a positive main term. Our starting point is the Larsen-Tiep
      character-ratio bound (\cref{thm:larsen-tiep-character-ratio}) which gives,
      for an absolute constant \(\sigma>0\) and \(n\geq5\),
      \[
         |\chi(x)|
            \leq\chi(1)^{1-\sigma\supp(x)/n}
         \qquad\bigl(\chi\in\Irr(\GL_n(q))\bigr).
      \]
      We prove a new character bound:
      \[
         |\chi(C(p^r))|\leq2^j q^{j/2}
         \qquad\bigl(\ell(\chi)=j<n/2\bigr),
      \]
      where \(\ell(\chi)\) denotes the character level, defined in
      \cref{sec:characters-green}. There exists a constant $B>0$ such that if \(\supp(x)>B\), 
      these bounds 
      make the
      nonlinear characters' contribution in the Frobenius sum smaller than the linear characters' contribution.
      It remains to consider the case \(\supp(x)\leq B\).

\item \emph{Bounded support.}
      We use a theorem of B\"unger and Nielsen
      \cite[Theorem~4.2]{BungerNielsen1999} which shows that \(x\in SS^{-1}\)
      whenever the characteristic polynomial of \(x\) has more than one
      irreducible factor.
      In the remaining case, \(x\) has a unipotent representative in the
      projective group.  

      Put \(d=\deg p\), so that \(n=rd\).  For \(d=1\), we explicitly construct $x$ as a product of $a\in S$, $b\in S^{-1}$. For \(d\geq2\), we distuinguish between the two ways in which \(n\) can be
      large. Large $r$ allows a similar explicit factorization. When $r$
      is small, \(d\) is large, so since $x$ has bounded support the nontrivial
      Jordan blocks of \(x\) fit inside one \(d\)-dimensional block.  After
      conjugation, we write \(x=x_0\oplus I_{n-d}\), with \(x_0\in\GL_d(q)\).
      
      We prove that one can lift a factorization \(x_0=aa'^{-1}\), with \(a,a'\) conjugate to
      \(C(p)\), to a factorization \(x=bb'^{-1}\), with
      \(b,b'\) conjugate to \(C(p^r)\).

      It therefore remains to factor \(x_0\) in this way, which is easier than
      the original task since most irreducible characters vanish at \(C(p)\).
      Analyzing the nonzero character values proves positivity of the
      Frobenius sum.

\item \emph{From cyclic-primary blocks to a large source.}
We now work over \(\F_2\). Let \(a=\bigoplus_i C(p_i^{r_i})\) be a
representative of \(S\) in rational canonical form, and let \(s=\supp(x)\). For
sufficiently large \(n\) and \(s\geq n/4\), the Guralnick-Larsen-Tiep character
bound \cite[Theorem~1.3]{GLTClassical} implies \(x\in SS^{-1}\). So we may assume \(s<n/4\), so that \(x\sim x_0\oplus I\) with
\(\dim x_0\leq 2s\).

If \(a=a_0\oplus a_1\) and (after padding \(x_0\) with identity blocks) we have
\(x_0=bb'^{-1}\) for some \(b,b'\) conjugate to \(a_0\), then
\[
   (b\oplus a_1)(b'\oplus a_1)^{-1}=bb'^{-1}\oplus a_1a_1^{-1}=x_0\oplus I.
\]
This allows for a ``cancellation'' in the source: placing the same unused source block in
both factors contributes \(a_1a_1^{-1}=I\) in the target, and allows us to get rid of a block in the source. If a single source block has
dimension at least \(2s\), we can use
\cref{thm:intro-primary-cyclic-difference}  to get a padded $x_0$ on that
block, and cancel all other blocks.

If no large dimension block exists, bounded \(s\) forces repeated block types. This
produces a centralizer of size at least \(2^{cn^2}\) for some fixed \(c>0\),
which contradicts the class-size hypothesis. Thus \(s\) must exceed a fixed
absolute threshold, although it may remain small relative to \(n\). We then cancel
blocks, ensuring the remaining source conjugacy class stays sufficiently large, until
its ambient dimension \(m\) satisfies \(2s<m\leq 4s\). Since \(s\geq m/4\), the
aforementioned Guralnick-Larsen-Tiep character
bound applies in the remaining dimension.
\end{enumerate}

\section{Background and notation}
\label{sec:background}

We collect the character estimates, character-level machinery, matrix
conventions, and centralizer bounds used in the proofs. All characters are
ordinary complex characters, \(\Irr(G)\) denotes the set of irreducible
characters of \(G\), and \(\chi(1)\) is the dimension of \(\chi\). We write
\(\boxtimes\) for the external tensor product and \(A\sim B\) for similarity of
matrices over their base field, and put \(\GL_0(q)=\{1\}\). Throughout, \(S\)
denotes a source conjugacy class and \(a\in S\) a chosen representative.

\subsection{Larsen-Tiep character bounds}

We use the following consequence of the character bound of Larsen and Tiep
\cite[Theorem~5.5]{LarsenTiepUniform}.

\begin{theorem}
\label{thm:larsen-tiep-character-ratio}
There exists \(\sigma>0\) such that, for every \(n\geq5\),
every prime power \(q\), every \(x\in\SL_n(q)\), and every
\(\chi\in\Irr(\GL_n(q))\),
\begin{equation}
   \frac{|\chi(x)|}{\chi(1)}
      \leq \chi(1)^{-\sigma\supp(x)/n}.
   \label{eq:LT-ratio-GL}
\end{equation}
\end{theorem}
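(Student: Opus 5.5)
The statement is presented as a consequence of \cite[Theorem~5.5]{LarsenTiepUniform}, so the plan is to translate that theorem into the form \eqref{eq:LT-ratio-GL}. As I recall it, Larsen--Tiep show that there is an absolute constant \(\sigma_0>0\) such that for every \(n\geq5\), every prime power \(q\), every \(g\in\GL_n(q)\) and every \(\chi\in\Irr(\GL_n(q))\),
\[
   |\chi(g)|\leq \chi(1)^{1-\sigma_0\supp(g)/n}.
\]
Here the support is exactly the quantity defined in the introduction, the codimension of a largest eigenspace over \(\overline{\F}_q\). The first step is to check that their normalization of support agrees with ours. If their version is phrased for \(\SL_n(q)\), or via \(\supp\) relative to central elements, we reconcile this by noting that \(\rank(g-\lambda I)\) is unchanged under extension of scalars. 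Hence the minimum over \(\lambda\in\overline{\F}_q^\times\) matches the definition used there, possibly up to a factor of at most \(2\) if their support is defined through \(\min\{\rank(g-\lambda I)\}\) over \(\lambda\) in the base field only. Any such discrepancy is absorbed by shrinking \(\sigma\).

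The second step transfers the bound between \(\GL_n(q)\) and \(\SL_n(q)\), if the cited bound is stated for \(\SL_n(q)\) or for characters of a quasisimple group. Let \(\chi\in\Irr(\GL_n(q))\). By Clifford theory, \(\chi|_{\SL_n(q)}\) is a sum of \(t\) distinct conjugates \(\psi_1,\dots,\psi_t\) with multiplicity one, since \(\GL_n/\SL_n\) is cyclic. So for \(x\in\SL_n(q)\),
\[
   |\chi(x)|\leq\sum_i|\psi_i(x)|\leq t\,\psi(1)^{1-\sigma_0 s/n},
   \qquad \chi(1)=t\psi(1),\quad s=\supp(x).
\]
This gives \(|\chi(x)|/\chi(1)\leq \psi(1)^{-\sigma_0 s/n}\).

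The third step, which I expect to be the only real obstacle, is to replace \(\psi(1)\) by \(\chi(1)\). We have \(t\leq q-1\), while either \(\psi(1)=1\) (and then \(\chi(1)\leq q-1\) is small) or \(\psi(1)\geq (q^n-1)/(q-1)-1\) by the Landazuri--Seitz-type lower bound for nontrivial characters of \(\SL_n(q)\) with \(n\geq5\). In the nontrivial case \(t\leq q\leq\psi(1)^{1/(n-2)}\leq\psi(1)\), so \(\chi(1)\leq\psi(1)^2\) and \(\psi(1)^{-\sigma_0 s/n}\leq\chi(1)^{-\sigma_0 s/(2n)}\). In the linear-on-\(\SL_n\) case, \(\chi\) is linear and \(|\chi(x)|/\chi(1)=1=\chi(1)^{0}\), so the inequality holds trivially. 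Taking \(\sigma=\sigma_0/2\), possibly halved once more from the first step, gives \eqref{eq:LT-ratio-GL} uniformly in \(q\) and \(n\geq5\).
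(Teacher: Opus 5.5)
Your proposal is correct and follows the paper's argument: restrict \(\chi\) to \(\SL_n(q)\) by Clifford theory, obtaining at most \(q-1\) constituents of equal degree, apply the Larsen--Tiep bound to each constituent, and absorb the factor \(t\le q-1\) using a lower bound on nonlinear character degrees, at the cost of halving \(\sigma\). The only cosmetic difference is the degree bound: you use a Landazuri--Seitz bound for \(\psi(1)\) on \(\SL_n(q)\), while the paper uses \(\chi(1)\geq q^{n-1}\) for nonlinear \(\chi\in\Irr(\GL_n(q))\) from Guralnick--Larsen--Tiep, and either one yields \(\chi(1)\le\psi(1)^2\).
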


Larsen and Tiep originally proved this result for characters of \(\SL_n(q)\).
By Clifford theory, the restriction of \(\chi\) to \(\SL_n(q)\) is a sum of
at most \(q-1\) irreducible characters of equal degree. For \(n\geq 5\),
every nonlinear character \(\chi\) satisfies \(\chi(1)\geq q^{n-1}\)
\cite[Theorem~1.2(i),(ii)]{GLTLevels}. Therefore, the bound for \(\GL_n(q)\)
follows from the original theorem
\cite[Theorem~5.5]{LarsenTiepUniform} by halving the absolute constant.
\subsection{Characters, Green's parametrization, and Pieri's formula}
\label{sec:characters-green}

The group \(P\leq G=\GL_n(q)\) of invertible block upper-triangular matrices
with fixed block sizes is called a \emph{standard parabolic subgroup}.
Its block-diagonal subgroup \(L\) is a \emph{Levi subgroup} of \(P\).
The subgroup \(N\) of matrices in \(P\) whose diagonal blocks are identity
matrices is the \emph{unipotent radical} of \(P\). We have \(P=LN\).
The projection onto the diagonal blocks is a homomorphism \(P\to L\)
with kernel \(N\). Thus \(N\triangleleft P\) and \(P/N\cong L\).

An \(L\)-representation can be viewed as a \(P\)-representation
through the quotient map \(P\to P/N\cong L\), with \(N\) acting trivially.
This is called \emph{inflation}. The \emph{Harish-Chandra induction} of an
\(L\)-representation is the \(G\)-representation obtained by first inflating
it to \(P\) and then inducing it to \(G\).

Its adjoint, \emph{Harish-Chandra restriction}, sends a representation
\(W\) of \(G\) to a representation of \(L\) by restricting \(W\) to \(P\)
and then taking the fixed subspace
\[
   W^N=\{w\in W:zw=w\text{ for every }z\in N\}.
\]
Since \(N\) is normal in \(P\), this subspace is \(P\)-stable, and the action
factors through \(P/N\cong L\). Notice that
ordinary restriction to \(L\) keeps the whole space \(W\), whereas
Harish-Chandra restriction keeps only \(W^N\), which may be zero. The averaging
operator \(A_N:W\to W\), defined by \(A_N(w)=|N|^{-1}\sum_{z\in N}zw\), is a
projection onto \(W^N\). Its image lies in \(W^N\), since multiplication by any
element of \(N\) permutes the summands, and it fixes every vector of \(W^N\).
Taking traces gives
\begin{equation}
   \label{eq:fixed-space-average}
   \dim W^N=\operatorname{tr}(A_N)
      =\frac{1}{|N|}\sum_{z\in N}\chi(z),
\end{equation}
where \(\chi\) is the character of the \(G\)-representation \(W\).

An irreducible character is \emph{cuspidal}
if it is not a component of a Harish-Chandra induction from any proper
Levi subgroup.  Equivalently, all of its Harish-Chandra restrictions to
proper Levi subgroups are zero.

In Green's parametrization, a character \(\chi\in\Irr(\GL_n(q))\) is labeled by
a finitely supported family of partitions \((\lambda_\varphi)_\varphi\), indexed
by cuspidal irreducible characters \(\varphi\in\Irr(\GL_e(q))\) for varying
\(e\), such that
\[
   \sum_\varphi \deg(\varphi)\,|\lambda_\varphi|=n,
   \qquad \deg(\varphi):=e.
\]
See \cite[Proposition~9.4]{Zelevinsky1981}.
We refer to the indices \(\varphi\) as \emph{cuspidal colors}.

We use the following form of Harish-Chandra's uniqueness theorem.

\begin{theorem}[Uniqueness of cuspidal support
  {\cite[Theorem~4.1]{Prasad2014Notes}}]
\label{thm:cuspidal-support}
Let \(q\) be a prime power and let \(\chi\) be an irreducible complex
character of \(\GL_n(q)\).  There exist positive integers
\(e_1,\ldots,e_r\) with \(e_1+\cdots+e_r=n\) and cuspidal
characters \(\varphi_i\in\Irr(\GL_{e_i}(q))\) such that \(\chi\) is a
component of Harish-Chandra induction of
\(\varphi_1\boxtimes\cdots\boxtimes\varphi_r\) from the block-diagonal Levi
subgroup \(\prod_{i=1}^r\GL_{e_i}(q)\).
The pairs \((e_1,\varphi_1),\ldots,(e_r,\varphi_r)\), including their
multiplicities, are uniquely determined by \(\chi\) up to permutation.
\end{theorem}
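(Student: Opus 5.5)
The plan is to deduce both existence and uniqueness from two formal properties of Harish-Chandra induction \(R_L^G\) and restriction \({}^*R_L^G\): transitivity, and the Mackey formula. Transitivity holds because inflation and induction compose along a chain of parabolics \(P\subseteq Q\) with Levis \(L\subseteq M\). Since \({}^*R_L^G\) is adjoint to \(R_L^G\), Frobenius reciprocity gives \(\langle R_L^G\varphi,\chi\rangle=\langle \varphi,{}^*R_L^G\chi\rangle\). Throughout, standard Levis \(L=\prod\GL_{e_i}(q)\) are indexed by compositions of \(n\), and \(W=S_n\) is the Weyl group of permutation matrices.

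\emph{Existence.} Consider the standard Levis \(L\) with \({}^*R_L^G\chi\neq0\); \(L=G\) is one of them. Choose such an \(L\) that is minimal under inclusion of block decompositions, and let \(\varphi\) be an irreducible constituent of \({}^*R_L^G\chi\). If \(\varphi\) were not cuspidal, there would be a proper Levi \(M\subsetneq L\) of \(L\) with \({}^*R_M^L\varphi\neq0\). By transitivity, \({}^*R_M^G\chi={}^*R_M^L\,{}^*R_L^G\chi\) contains \({}^*R_M^L\varphi\neq0\), and this contradicts the minimality of \(L\). So \(\varphi=\varphi_1\boxtimes\cdots\boxtimes\varphi_r\) with every \(\varphi_i\) cuspidal. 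By adjunction, \(\chi\) is a constituent of \(R_L^G\varphi\).

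\emph{Mackey formula.} For standard Levis \(L,M\) and a set \(D\) of minimal-length representatives of \(W_L\backslash W/W_M\), I will prove
\[
   {}^*R_L^G\,R_M^G
     =\sum_{w\in D} R^{L}_{L\cap {}^wM}\circ \operatorname{ad}(w)\circ {}^*R^{M}_{M\cap L^w}.
\]
The proof starts from the Bruhat decomposition \(G=\bigsqcup_{w\in D}P_L\,w\,P_M\) and the ordinary Mackey theorem for \(\Ind_{P_M}^G\) restricted to \(P_L\). For each double coset, taking \(N_L\)-invariants leaves the expected term. The key input is that \(P_L\cap {}^wP_M\) factors compatibly with the Levi decompositions of both parabolics, and this is exactly where minimality of \(w\) is used. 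I expect this to be the main technical step; everything after it is formal. For \(\GL_n\) it can be checked by hand, since \(L\cap{}^wM\) is the block-diagonal subgroup attached to the matrix of intersections of the two block decompositions.

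\emph{Uniqueness.} Suppose \(\chi\) is a constituent of both \(R_L^G\varphi\) and \(R_M^G\psi\), with \(\varphi,\psi\) cuspidal. Then
\[
   0\neq\langle R_L^G\varphi,R_M^G\psi\rangle=\langle\varphi,{}^*R_L^G R_M^G\psi\rangle .
\]
Expand the right-hand side with the Mackey formula. If \(M\cap L^w\) is a proper Levi of \(M\), then \({}^*R^{M}_{M\cap L^w}\psi=0\) by cuspidality of \(\psi\), so the \(w\)-term vanishes. Hence every surviving term has \({}^wM\subseteq L\). Applying adjunction once more inside \(L\), cuspidality of \(\varphi\) forces \(L\cap{}^wM=L\) for that term to pair nontrivially with \(\varphi\). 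So some \(w\) satisfies \({}^wM=L\) and \(\langle\varphi,{}^w\psi\rangle\neq0\), which gives \(\varphi={}^w\psi\). Since \(w\) is a permutation matrix carrying the blocks of \(M\) onto those of \(L\), it permutes the factors. Therefore the multisets \(\{(e_i,\varphi_i)\}\) and \(\{(e'_j,\psi_j)\}\) agree up to permutation.
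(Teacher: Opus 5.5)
Your proof is correct. Existence comes from a Levi subgroup minimal among those with nonzero Harish-Chandra restriction. Uniqueness comes from the Mackey formula: cuspidality of $\psi$ kills every term with $M\cap L^w\neq M$, and cuspidality of $\varphi$ then forces ${}^wM=L$. This is the standard Harish-Chandra argument, and, as you say, its only non-formal input is the Mackey formula, i.e.\ the factorization of $P_L\cap{}^wP_M$ for minimal-length double coset representatives $w$. The paper gives no proof of its own and simply quotes the result from Prasad's notes; your argument is the usual proof of that cited theorem, so there is nothing further to compare.
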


The multiset of characters \(\varphi_1,\ldots,\varphi_r\) in
\cref{thm:cuspidal-support} is called the \emph{cuspidal support} of \(\chi\).
In Green's parametrization, this multiset contains each \(\varphi\) exactly
\(|\lambda_\varphi|\) times.

A character is \emph{primary} if exactly one partition \(\lambda_\varphi\)
in its Green parameter is nonempty.  In this case we write
\(\chi=\chi^{\varphi,\lambda}\), where \(\lambda=\lambda_\varphi\) and all other
partitions are empty.  Writing \(e=\deg(\varphi)\), we have
\[
   n=e|\lambda|,
   \qquad \lambda\vdash n/e.
\]
For a primary character we call \(e\) the \emph{cuspidal degree} of \(\chi\).
Its cuspidal support consists of \(|\lambda|=n/e\) copies of \(\varphi\).
A primary character is itself cuspidal precisely when
 \(e=n\), and in this case
\(\chi^{\varphi,(1)}=\varphi\).

Let \(\1\) denote the trivial color, namely the trivial character of
\(\GL_1(q)\).  The \emph{unipotent characters} of \(\GL_n(q)\) are
\(\chi_\lambda:=\chi^{\1,\lambda}\), with \(\lambda\vdash n\).
A primary character is a \emph{primary hook} if its partition is a hook
\(\lambda=(m-j,1^j)\), where \(m=n/e\) and \(0\leq j<m\).

The \emph{true level} \(\ell^*(\chi)\) of
\(\chi\in\Irr(\GL_n(q))\) is the least \(j\geq0\) for which \(\chi\)
appears as an irreducible component of the permutation representation arising
from the action of \(\GL_n(q)\) on \((\F_q^n)^j\).
The \emph{level} \(\ell(\chi)\) is the least true level among the twists of
\(\chi\) by linear characters, see~\cite{GLTLevels}.

Set \((\lambda_\varphi)_1=0\) when \(\lambda_\varphi\) is empty.
The level is read directly from the first rows: by
\cite[Theorem~3.13(i),(iii)]{GLTLevels},
\[
   \ell^*(\chi)=n-(\lambda_{\1})_1,
   \qquad
   \ell(\chi)
      =n-\max_{\alpha\in\Irr(\GL_1(q))}(\lambda_\alpha)_1.
\]
In particular, a primary character of cuspidal degree \(e>1\) has
\(\ell^*(\chi)=\ell(\chi)=n\).

We make use of the following dimension bounds.

\begin{theorem}[Guralnick-Larsen-Tiep
  {\cite[Theorem~1.2(i),(ii)]{GLTLevels}}]
\label{thm:level-dimension-bounds}
Let \(n\geq3\), and let \(\chi\in\Irr(\GL_n(q))\) have level
\(j=\ell(\chi)\).  Then
\[
   q^{j(n-j)}\leq\chi(1)\leq q^{nj}.
\]
If \(j\geq n/2\), then the following lower bound holds
\[
   \chi(1)>\frac{9}{16}(q-1)q^{n^2/4-1}>q^{n^2/4-2}.
\]

\end{theorem}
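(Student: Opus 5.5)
The plan is to reduce to true level, read the upper bound off the definition, get the lower bounds from the Green degree formula, and treat the case \(j\geq n/2\) as a separate minimisation.

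\textbf{Reduction.} Twisting \(\chi\) by a linear character \(\alpha\circ\det\) does not change \(\chi(1)\). Choose the twist attaining the minimum in the definition of \(\ell(\chi)\). We may then assume \(\ell^*(\chi)=\ell(\chi)=j\), so by the first-row formula \(\lambda_{\1}\) has first row \((\lambda_{\1})_1=n-j\).

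\textbf{Upper bound.} By definition of true level, \(\chi\) is an irreducible constituent of the permutation module \(\mathbb C[(\F_q^n)^j]\). This module has dimension \(q^{nj}\), so \(\chi(1)\leq q^{nj}\) immediately.

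\textbf{Lower bound \(q^{j(n-j)}\).} I will use the Green degree formula
\[
   \chi(1)=\frac{|\GL_n(q)|_{p'}}{\prod_\varphi|\GL_{\deg(\varphi)|\lambda_\varphi|}(q)|_{p'}}\prod_\varphi\chi^{\varphi,\lambda_\varphi}(1),
\]
where each primary degree is a \(q^{\deg\varphi}\)-hook formula \(Q^{n(\lambda)}\prod_i(Q^i-1)/\prod_{h}(Q^{h}-1)\) with \(Q=q^{\deg\varphi}\).

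First, the prefactor is a product of \(q\)-multinomial coefficients. Each \(q\)-binomial \(\binom{a+b}{a}_q\) is at least \(q^{ab}\).

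Second, inside the trivial-color block \(\lambda_{\1}\vdash m\), I compare \(\chi_{\lambda_{\1}}(1)\) with \(\binom{m}{m-(\lambda_{\1})_1}_q\). The cleanest route is Pieri's formula: \(\chi_{\lambda}\) is a constituent of the Harish-Chandra induction of \(\1\boxtimes\chi_{\mu}\), where \(\mu\) is \(\lambda\) with its first row removed. From the hook formula one shows directly that
\[
   \chi_\lambda(1)\geq q^{\lambda_1|\mu|}\,\chi_\mu(1).
\]
This holds because the hooks in the first row cancel against the factor \(\prod(q^i-1)\), up to powers of \(q\).

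Combining the two, the cross term between the \(n-j\) boxes of the first row and the remaining \(j\) boxes contributes at least \(q^{j(n-j)}\). Everything else is at least \(1\).

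\textbf{The case \(j\geq n/2\).} Here every colour's first row is at most \(n/2\), by the second formula for \(\ell(\chi)\). The claim is that the minimum degree over all such parameters is attained by a unipotent character (up to twist) with two rows \((\lceil n/2\rceil,\lfloor n/2\rfloor)\). Its degree is
\[
   q\,\frac{(q^{n}-1)\cdots}{\cdots}\approx\frac{q-1}{q}\cdot q^{n^2/4}\cdot c,
\]
and bounding the product \(\prod_{i\geq2}(1-q^{-i})\geq 9/16\) gives the stated \(\tfrac{9}{16}(q-1)q^{n^2/4-1}\).

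To justify minimality I will argue in two steps.
\begin{enumerate}[label=(\roman*)]
\item Splitting boxes among several colours, or using colours of degree \(>1\), multiplies the degree by \(q\)-multinomials. This only increases it, since each is at least the corresponding power of \(q\).
\item Within a single colour, moving a box from a lower row to raise the first row up to \(n/2\) decreases \(n(\lambda)\) while keeping the hook product controlled.
\end{enumerate}

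\textbf{Main obstacle.} I expect this exchange argument in (ii), together with the precise constant \(9/16\), to be the delicate step. The comparison of hook products under moving a box is not monotone term by term. My first attempt would be a direct induction on \(j\) using the Pieri inequality above, applied to the second row.
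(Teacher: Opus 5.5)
The paper gives no proof of this statement; it quotes it from Guralnick--Larsen--Tiep. Your first half is sound. The upper bound is immediate from the permutation module. The bound \(q^{j(n-j)}\) follows from \(\binom{n}{k}_q\ge q^{k(n-k)}\), where \(k\) is the weighted rank of the nontrivial colours, together with \(\chi_\lambda(1)\ge q^{\lambda_1|\mu|}\chi_\mu(1)\), since \(k(n-k)+(n-j)(j-k)\ge j(n-j)\). The second inequality comes from a pairing, not a cancellation. The first-row hook lengths are distinct and at most \(|\lambda|\), so the \(i\)-th largest is at most \(|\lambda|-i+1\) and can be matched with \(q^{|\lambda|-i+1}-1\). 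Your primary-degree formula, however, is wrong as written. Green's formula is \(\chi^{\varphi,\lambda}(1)=\prod_{i=1}^{em}(q^i-1)\,Q^{n(\lambda)}/\prod_h(Q^h-1)\) with \(e=\deg\varphi\) and \(Q=q^e\), and yours drops the factor \(\prod_{i\le em,\ e\nmid i}(q^i-1)\). That is harmless for the first bound, but not for the second.

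The case \(j\ge n/2\) has a genuine gap: the exchange argument is not given, and it cannot be, because the claimed minimizer is wrong. For odd \(n\), the partition \((\lceil n/2\rceil,\lfloor n/2\rfloor)\) has level \(\lfloor n/2\rfloor<n/2\). Its degree can even be below the target: for \(n=3\), \(\chi_{(2,1)}(1)=q^2+q<\frac{9}{16}(q-1)q^{5/4}\) once \(q\ge17\). For even \(n\), the minimum is not unipotent. If \(\deg\varphi=2\), then \(\chi^{\varphi,(n/2)}\) has level \(n\) and degree \(\prod_{i\ \mathrm{odd},\,i<n}(q^i-1)<q^{n^2/4}\le\chi_{(n/2,n/2)}(1)\). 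For \(n=4\), \(q=2\), this is the degree-\(7\) character of \(\GL_4(2)\cong A_8\), whereas \(\chi_{(2,2)}(1)=20\). So step (i) is false. A primary character carries no \(q\)-multinomial, and your formula would assign this one degree \(1\). In fact this character is what produces the shape \((q-1)q^{n^2/4-1}\) of the bound.

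The missing idea is to bound the \(q\)-exponent instead of locating a minimizer. Green's formula gives \(\chi(1)=q^{D}R\), where \(h\) runs over the hook lengths of \(\lambda_\varphi\) and
\[
   D=\binom n2-\sum_\varphi\deg(\varphi)\sum_i\binom{(\lambda_\varphi)_i}{2},
   \qquad
   R=\frac{\prod_{i=1}^n(1-q^{-i})}{\prod_\varphi\prod_h\bigl(1-q^{-\deg(\varphi)h}\bigr)}\ge\prod_{i=1}^n(1-q^{-i}).
\]
When \(j\ge n/2\), every row of every \(\lambda_\varphi\) has length at most \(n/2\). For linear colours this is the level formula; for the others it follows from \(\deg(\varphi)|\lambda_\varphi|\le n\) with \(\deg\varphi\ge2\). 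Hence the subtracted sum is at most \(\frac n2\bigl(\frac n2-1\bigr)\), and \(D\ge n^2/4\). Finally,
\(\prod_{i\ge1}(1-q^{-i})\ge(1-q^{-1})(1-q^{-2})\bigl(1-\sum_{i\ge3}q^{-i}\bigr)\ge\frac{9}{16}(1-q^{-1})\). At \(q=2\) the last factors are \(\frac34\cdot\frac34\), which is where \(9/16\) comes from. Therefore \(\chi(1)>q^{n^2/4}\prod_{i\ge1}(1-q^{-i})\ge\frac{9}{16}(q-1)q^{n^2/4-1}\).
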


For small true level, Green's parametrization also gives a convenient
indexing.  If \(\ell^*(\chi)=j\) and \(j\leq n/2\), then
\(\lambda_{\1}=(n-j,\mu)\), and deleting its first row leaves a Green parameter
of weighted rank \(j\), namely
\[
   |\mu|+\sum_{\varphi\neq\1}\deg(\varphi)\,|\lambda_\varphi|=j.
\]
The converse is also true: when $j \le n/2$, adjoining a first row of length
\(n-j\) to the trivial-color partition of any Green parameter of weighted rank
\(j\) gives a valid partition.  Thus for $j \le n/2$, the true-level-\(j\)
characters of \(\GL_n(q)\) are naturally parametrized by \(\Irr(\GL_j(q))\)
\cite[Theorem~3.9(i)]{GLTLevels}.  This is the form used in the inverse Pieri
formula below.

For \(k,m\geq0\), let \(P_{k,m}\) denote the standard parabolic subgroup of
\(\GL_{k+m}(q)\) with Levi factor \(\GL_k(q)\times\GL_m(q)\).  We first
recall the usual Pieri rule.  Suppose that
\(\theta\in\Irr(\GL_k(q))\) has trivial-color partition \(\rho\).  Then
\begin{equation}
   \Ind_{P_{k,m}}^{\GL_{k+m}(q)}(\theta\boxtimes\1)
      =\sum_{\psi}\psi,
   \label{eq:ordinary-pieri}
\end{equation}
where \(\psi\) runs over the characters obtained by adding a horizontal
\(m\)-strip to \(\rho\) and leaving all other cuspidal components unchanged (for example~\cite[Lemma~2.4]{StableCharactersGL}).
Here a horizontal \(m\)-strip consists of \(m\) boxes, no two in the same
column, and every character in the sum occurs with multiplicity one.  We need the inverse formula.

\begin{lemma}
\label{lem:stable-pieri-inversion}
Suppose that \(n\geq2j\) and \(\chi\in\Irr(\GL_n(q))\) has true level \(j\).
Write \(\lambda_{\1}=(n-j,\mu)\), and put \(k=j-|\mu|\).  Thus \(k\) is
the weighted rank carried by the nontrivial cuspidal components.  For every
\(\nu\subseteq\mu\) such that \(\mu/\nu\) is a vertical strip - that is, it
has at most one box in each row - let
\(\theta_\nu\in\Irr(\GL_{k+|\nu|}(q))\) have trivial-color partition
\(\nu\) and the same nontrivial cuspidal components as \(\chi\).  Then
\begin{equation}
   \chi
   =\sum_{\substack{\nu\subseteq\mu\\
             \mu/\nu\text{ a vertical strip}}}
      (-1)^{|\mu|-|\nu|}
      \Ind_{P_{k+|\nu|,\,n-k-|\nu|}}^{\GL_n(q)}
         (\theta_\nu\boxtimes\1).
   \label{eq:exact-pieri-inversion}
\end{equation}
And the sum has at most \(2^j\) terms.
\end{lemma}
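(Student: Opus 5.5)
We will derive \eqref{eq:exact-pieri-inversion} by inverting the ordinary Pieri rule \eqref{eq:ordinary-pieri}. The inversion is really a combinatorial statement about partitions. Under Zelevinsky's PSH-algebra description of \(\bigoplus_n R(\GL_n(q))\) \cite{Zelevinsky1981}, Harish-Chandra induction is the product. For a fixed multiset of nontrivial cuspidal components, the trivial-color part behaves like the ring of symmetric functions. There \(\Ind(\theta\boxtimes\1_m)\) corresponds to multiplying the Schur function \(s_\rho\) by \(h_m\).

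The plan therefore has three steps.

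\textbf{Step 1.} Fix the nontrivial cuspidal data of \(\chi\), which carry total weighted rank \(k\). Record the bijection between characters with this data and trivial-color partitions. By \eqref{eq:ordinary-pieri}, we have
\[
\Ind_{P_{k+|\nu|,\,n-k-|\nu|}}^{\GL_n(q)}(\theta_\nu\boxtimes\1)=\sum_{\rho}\psi_\rho,
\]
where \(\rho\) runs over partitions with \(\rho/\nu\) a horizontal strip of size \(N:=n-k-|\nu|\).

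\textbf{Step 2.} Substitute this into the right-hand side of \eqref{eq:exact-pieri-inversion}. The coefficient of \(\psi_\rho\) becomes
\[
c_\rho=\sum_{\nu}(-1)^{|\mu|-|\nu|},
\]
where \(\nu\) ranges over partitions satisfying both conditions: \(\mu/\nu\) is a vertical strip, and \(\rho/\nu\) is a horizontal strip. We must show \(c_\rho=\delta_{\rho,\lambda_{\1}}\).

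This is where \(n\ge 2j\) enters. Every \(\nu\subseteq\mu\) has \(\nu_1\le\mu_1\le|\mu|\le j-k\le n-j\). The strip added to \(\nu\) has size \(n-k-|\nu|\ge n-j\). Consequently any horizontal strip \(\rho/\nu\) containing \(\lambda_{\1}\)-type shapes must place a long first row, so \(\rho=(\rho_1,\bar\rho)\) with \(\rho_1\) large.

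The condition that \(\rho/\nu\) is a horizontal strip says \(\rho_1\ge\nu_1\ge\rho_2\ge\nu_2\ge\cdots\). Because \(\rho_1\) is large, the constraint involving \(\rho_1\) is vacuous. The condition therefore reduces to: \(\nu\supseteq\bar\rho\), and \(\nu/\bar\rho\) is a horizontal strip.

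We then get
\[
c_\rho=\sum_{\bar\rho\subseteq\nu\subseteq\mu}(-1)^{|\mu/\nu|},
\]
summed over \(\nu\) with \(\nu/\bar\rho\) horizontal and \(\mu/\nu\) vertical. This is exactly the coefficient of \(s_\mu\) in \(s_{\bar\rho}\sum_i h_i(-1)^{\,\cdot}e_{\cdot}\). That is the coefficient arising from the identity \(\sum_i (-1)^i h_{r-i}e_i=\delta_{r,0}\), i.e. \(H(t)E(-t)=1\). It equals \(\delta_{\bar\rho,\mu}\), which gives \(\rho=(n-j,\mu)=\lambda_{\1}\).

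Alternatively, one can prove the vanishing directly by a sign-reversing involution. Take the last box, in reading order, of \(\mu/\bar\rho\) whose membership in \(\nu\) can be toggled; toggling it preserves both strip conditions.

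\textbf{Step 3.} Bound the number of terms. The summands are indexed by vertical strips \(\mu/\nu\). Each is a subset of the set of removable corners of \(\mu\), taking at most one box per row. The number of such subsets is at most \(2^{\ell(\mu)}\le 2^{|\mu|}\le 2^j\).

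\textbf{Main obstacle.} We expect the main difficulty to be the bookkeeping in Step 2. We must check that \(n\ge2j\) really makes the first-row constraint vacuous for every \(\nu\) that appears. We must also check that no \(\rho\) with a short first row contributes. Verifying the inequality \(\rho_2\le\nu_1\le\mu_1\le n-j\le\rho_1\) carefully should settle both points.
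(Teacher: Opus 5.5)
Your proposal is correct. It differs from the paper in that it proves the inversion directly, where the paper cites it.

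\textbf{How the two routes differ.} The paper fixes the nontrivial cuspidal components and quotes the symmetric-group inverse Pieri formula for $S_N$, with $N=n-k$. That formula applies because $N-|\mu|=n-j\ge j\ge\mu_1$. The paper then transfers it to $\GL_n(q)$ by noting that \eqref{eq:ordinary-pieri} and the $S_N$ Pieri rule have the same horizontal-strip matrix, so the signed vertical-strip matrix inverts both.

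You instead expand the right-hand side using \eqref{eq:ordinary-pieri} and compute each coefficient $c_\rho$. The hypothesis $n\ge 2j$ makes the interlacing condition $\rho_1\ge\nu_1$ redundant. Then $c_\rho$ is the coefficient of $s_\mu$ in $s_{\bar\rho}\sum_{a+b=r}(-1)^b h_a e_b=s_{\bar\rho}\,\delta_{r,0}$. Hence $c_\rho=\delta_{\bar\rho,\mu}$, that is, $\rho=(n-j,\mu)=\lambda_{\1}$.

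In effect you prove the cited stable-range identity from $H(t)E(-t)=1$. This is self-contained and shows exactly where $n\ge 2j$ is used. The paper's version is shorter but leaves that combinatorics to the reference.

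\textbf{Two small repairs.}
\begin{enumerate}
\item The redundancy of $\rho_1\ge\nu_1$ must hold for every $\nu$ in the reduced sum. Your argument via the size of the added strip presupposes that $\rho/\nu$ is already a horizontal strip, so it is circular for this direction. The fix is immediate: $\bar\rho\subseteq\nu\subseteq\mu$ gives $\rho_1=n-k-|\bar\rho|\ge n-k-|\mu|=n-j\ge\mu_1\ge\nu_1$.
\item In Step~3, a vertical strip $\mu/\nu$ is not in general a set of removable corners of $\mu$; take $\mu=(1,1)$ and $\nu=\emptyset$. It is, however, determined by the set of rows that lose a box. This gives your bound $2^{\ell(\mu)}\le 2^j$, with $\ell(\mu)$ the number of rows of $\mu$, exactly as in the paper.
\end{enumerate}

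Your sign-reversing involution is described too loosely to count as a proof, but you do not need it: the $H(t)E(-t)=1$ argument already suffices.
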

The lemma follows from inverse Pieri for the symmetric group, because the two
Pieri rules have the same effect on the trivial-color partition.  We make
this comparison explicit in the following proof.
\begin{proof}
Fix the nontrivial cuspidal components and set \(N=n-k\).  The
trivial-color partition of \(\chi\) is
\[
   (n-j,\mu)=(N-|\mu|,\mu).
\]
Let \(\operatorname{Sp}_\lambda\) denote the irreducible character of
\(S_{|\lambda|}\) indexed by \(\lambda\).  The ordinary Pieri rule for \(S_N\)
is
\[
   \Ind_{S_{|\rho|}\times S_{N-|\rho|}}^{S_N}
      (\operatorname{Sp}_\rho\boxtimes\1)
   =\sum_{\substack{\lambda\vdash N\\
              \lambda/\rho\text{ a horizontal strip}}}
      \operatorname{Sp}_\lambda.
\]
Since \(N-|\mu|=n-j\geq j\geq\mu_1\), the inverse Pieri formula gives
\[
   \operatorname{Sp}_{(N-|\mu|,\mu)}
      =\sum_{\substack{\nu\subseteq\mu\\
              \mu/\nu\text{ a vertical strip}}}
         (-1)^{|\mu|-|\nu|}
         \Ind_{S_{|\nu|}\times S_{N-|\nu|}}^{S_N}
            (\operatorname{Sp}_\nu\boxtimes\1),
\]
see \cite[Proposition~2.1, equations~(2.1)--(2.2)]{AssafSpeyer2020}.

The inverse Pieri formula is the matrix inverse of the ordinary Pieri rule.
After the nontrivial cuspidal components are fixed,
\eqref{eq:ordinary-pieri} and the symmetric-group Pieri rule have the same
matrix: an entry is one precisely when the larger partition is obtained from
the smaller by adding a horizontal strip, and is zero otherwise.  Hence the
signed vertical-strip matrix above also inverts \eqref{eq:ordinary-pieri}.
Restoring the fixed components adds \(k\) to the
rank of each inner character: the term \(\operatorname{Sp}_\nu\) becomes
\(\theta_\nu\in\Irr(\GL_{k+|\nu|}(q))\), and the trivial factor has rank
\(N-|\nu|=n-k-|\nu|\).  Thus the symmetric-group identity becomes exactly
\eqref{eq:exact-pieri-inversion}.

Finally, a vertical strip is determined by the rows from which its boxes are
removed, so the number of terms is at most \(2^{|\mu|}\leq2^j\).
\end{proof}

\subsection{Companion blocks and reciprocal polynomials}

For a monic polynomial \(f\), let \(C(f)\) denote its companion matrix.  Let
\(p\in\F_q[T]\) be monic, irreducible, of degree \(d\).
Its monic reciprocal polynomial is
\[
   p^\vee(T):=p(0)^{-1}T^d p(T^{-1}).
\]
The roots of \(p^\vee\) are the inverses of the roots of \(p\).  Consequently,
\[
   C(p)^{-1}\sim C(p^\vee).
\]
More generally, inversion sends the cyclic-primary class of \(C(p^r)\) to
that of \(C((p^\vee)^r)\).

We call a matrix \emph{primary of degree \(e\)} if its characteristic
polynomial is a power of one irreducible polynomial of degree \(e\).  It is
\emph{nonprimary} otherwise.  A matrix is \emph{cyclic} if its characteristic
and minimal polynomials coincide.

\subsection{Unipotent classes and dominance order}
\label{sec:unipotent-classes}

Unipotent conjugacy classes in \(\GL_n(q)\) are indexed by
partitions \(\tau=(\tau_1\geq\cdots\geq\tau_h>0)\) of \(n\):
a matrix of type \(\tau\) has Jordan form
\[
   J_{\tau_1}(1)\oplus\cdots\oplus J_{\tau_h}(1),
   \qquad \tau_1+\cdots+\tau_h=n,
\]
where \(J_m(1)\in\GL_m(q)\) has ones on the diagonal and superdiagonal and zeros
elsewhere.  The conjugate partition \(\tau'\) has \(j\)-th part
\(\tau'_j=|\{a:\tau_a\geq j\}|\), the number of Jordan blocks of size
at least \(j\).  Thus \(\tau'_1=h\) is the number of Jordan blocks,
and the support of a matrix of type \(\tau\) is \(n-h\).

If \(u=I+A\) has type \(\tau\), each Jordan block of size \(\tau_a\)
contributes \(\min\{i,\tau_a\}\) to \(\dim\ker A^i\).  Summing over
the blocks gives
\begin{equation}
   \dim\ker A^i
      =\sum_{a=1}^h\min\{i,\tau_a\}
      =\sum_{j=1}^i\tau'_j,
   \qquad i\geq1,
   \label{eq:unipotent-kernel-dimensions}
\end{equation}
where missing parts are treated as zero.  The last sum counts the first
\(i\) positions present in each Jordan chain.

The \emph{dominance order} on partitions of \(n\) can be read from either
the parts or the conjugate parts, with the inequality reversed:
\begin{equation}
   \begin{aligned}
   \mu\leq\tau
   &\quad\Longleftrightarrow\quad
   \sum_{j=1}^i\mu_j\leq\sum_{j=1}^i\tau_j
   &&\text{for every }i\geq1,\\
   &\quad\Longleftrightarrow\quad
   \sum_{j=1}^i\mu'_j\geq\sum_{j=1}^i\tau'_j
   &&\text{for every }i\geq1.
   \end{aligned}
   \label{eq:dominance-conjugate-partitions}
\end{equation}
By \eqref{eq:unipotent-kernel-dimensions}, moving downward in dominance
order cannot decrease the kernel dimension of any power of the nilpotent part.

\subsection{Centralizers and the natural class-size scale}

For any finite group \(G\), we have the standard
character centralizer bound: for \(g\in G\) and \(\chi\in\Irr(G)\),
\begin{equation*}
   1 = 1/|G| \cdot \sum_h|\chi(h)|^2
      \ge\frac{|\chi(g)|^2 \cdot |g^G|}{|G|}
      =|\chi(g)^2|/|C_G(g)|.
\end{equation*}
So 
\begin{equation}
    |C_G(g)|\ge |\chi(g)|^2.
    \label{eq:character-centralizer-bound}
\end{equation}
For \(g\in\GL_n(q)\), the size of its conjugacy class is
\[
   |g^{\GL_n(q)}|
   =\frac{|\GL_n(q)|}{|C_{\GL_n(q)}(g)|}.
\]
For the cyclic-primary block, put \(n=dr\).  Its centralizer is the unit group
of its endomorphism ring:
\[
   C_{\GL_n(q)}\bigl(C(p^r)\bigr)
   \cong
   \bigl(\F_q[T]/(p^r)\bigr)^\times,
\]
and hence
\[
   \left|C_{\GL_n(q)}\bigl(C(p^r)\bigr)\right|
   =q^{d(r-1)}(q^d-1).
\]
Consequently,
\[
   \log_q\left|C(p^r)^{\GL_n(q)}\right|
      =n^2-n+O_q(1).
\]

For the repeated semisimple block
\[
   a=C(p)^{\oplus m}\in\GL_{dm}(q),
   \qquad S=a^{\GL_{dm}(q)},
\]
the \(p\)-action identifies the natural space with an \(m\)-dimensional
space over \(\F_{q^d}\).  Therefore
\begin{equation}
   C_{\GL_{dm}(q)}(a)\cong\GL_m(q^d),
   \label{eq:repeated-block-centralizer}
\end{equation}
so, writing \(n=dm\),
\begin{align*}
   \log_q |S|
      &=n^2-dm^2+O_q(1),\\
   \frac{\log |S|}{\log|\GL_n(q)|}
      &=1-\frac1d+O_q(n^{-2}).
\end{align*}

For a general \(D\in\GL_m(q)\), let \(b_{p,r}\) be the multiplicity of
\(C(p^r)\) in its elementary-divisor decomposition, where \(p\) is monic
irreducible with \(p(0)\neq0\) and \(r\geq1\).  By the standard
centralizer-order formula \cite[proof of Theorem~6.4]{FulmanGuralnick2012},
the proportion of matrices commuting with \(D\) that are invertible is
\begin{equation}
   \frac{|C_{\GL_m(q)}(D)|}{|C_{\operatorname{Mat}_m(q)}(D)|}
      =\prod_{p,r}\prod_{j=1}^{b_{p,r}}(1-q^{-j\deg p}).
   \label{eq:general-centralizer-proportion}
\end{equation}

\section{Character estimates}
\label{sec:character-bound}

In this section we prove character bounds for cyclic-primary
elements, and an exact character-ratio identity for unipotent elements of small
support.

\subsection{A level bound for cyclic-primary sources}

Let \(p\in\F_q[T]\) be monic irreducible of degree \(d\), let \(r\geq1\),
and put \(n=rd\) and \(a=C(p^r)\in\GL_n(q)\).

We first use the following consequence of the Pieri inversion formula.

\begin{lemma}
\label{lem:pieri-cyclic-evaluation}
Suppose \(n\geq2j\) and \(\chi\in\Irr(\GL_n(q))\) has true level \(j\).  There
are integers \(0\leq k_i\leq j\), characters \(\theta_i\in\Irr(\GL_{k_i}(q))\),
and signs \(\varepsilon_i\in\{1,-1\}\), with at most \(2^j\) indices \(i\), such
that
\begin{equation}
   \chi(C(p^r))
   =\sum_i\varepsilon_i \cdot 
      \begin{cases}
         \theta_i(C(p^{k_i/d})),&d\mid k_i,\\
         0,&d\nmid k_i.
      \end{cases}
   \label{eq:pieri-cyclic-evaluation}
\end{equation}
For \(k_i=0\), the displayed character value is interpreted as \(1\).
\end{lemma}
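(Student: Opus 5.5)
We will evaluate both sides of the inverse Pieri identity of \cref{lem:stable-pieri-inversion} at the element \(a=C(p^r)\). That lemma writes \(\chi\) as a signed sum of at most \(2^j\) Harish-Chandra induced characters
\[
   \Ind_{P_{k',\,n-k'}}^{\GL_n(q)}(\theta\boxtimes\1),
   \qquad k'=k+|\nu|\leq j,
\]
where \(\theta\in\Irr(\GL_{k'}(q))\) and the signs are \((-1)^{|\mu|-|\nu|}\). We take \(k_i:=k'\), \(\theta_i:=\theta_\nu\), and \(\varepsilon_i\) equal to that sign. It then suffices to prove, for each \(0\leq k\leq j\leq n/2\) and each \(\theta\in\Irr(\GL_k(q))\), that
\[
   \Ind_{P_{k,n-k}}^{\GL_n(q)}(\theta\boxtimes\1)\bigl(C(p^r)\bigr)
   =\begin{cases}\theta(C(p^{k/d})),& d\mid k,\\ 0,& d\nmid k.\end{cases}
\]
For \(k=0\) this value is \(1\).

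We compute the left side with the standard formula for an induced character. Since \(P_{k,n-k}\) is the stabilizer of a \(k\)-dimensional subspace, \(G/P\) is the Grassmannian of \(k\)-subspaces \(W\leq\F_q^n\). The induced character at \(g\) is therefore the sum, over \(g\)-invariant \(k\)-subspaces \(W\), of the inflated character evaluated at \(g\) in the corresponding conjugate of \(P\). That value is \(\theta(g|_W)\cdot\1(g|_{V/W})=\theta(g|_W)\). Concretely, we choose a basis adapted to \(W\), so that \(g\) becomes block upper triangular with diagonal blocks \(g|_W\) and \(g|_{V/W}\). The inflated character only sees the Levi projection, and the trivial factor contributes \(1\).

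Next we list the \(a\)-invariant subspaces. The matrix \(a=C(p^r)\) makes \(V=\F_q^n\) the cyclic \(\F_q[T]\)-module \(\F_q[T]/(p^r)\). Its submodules are exactly the ideals \((p^i)/(p^r)\), which form a chain. The ideal \((p^i)/(p^r)\) has dimension \(d(r-i)\) and is isomorphic to \(\F_q[T]/(p^{r-i})\). Hence an invariant \(k\)-subspace exists only when \(d\mid k\), in which case it is unique, and the restriction of \(a\) to it is similar to \(C(p^{k/d})\). We need \(k\leq j\leq n/2<n\), so \(k/d\leq r\) and the subspace exists. This gives the displayed dichotomy, and summing over \(i\) yields \eqref{eq:pieri-cyclic-evaluation}.

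The main point needing care is the bookkeeping in the first step. \cref{lem:stable-pieri-inversion} induces from \(P_{k+|\nu|,\,n-k-|\nu|}\) with the nontrivial part placed in the first block, so the parabolic must stabilize a subspace of dimension \(k_i\) on which \(\theta_i\) acts. The orientation convention, namely stabilizer of a subspace versus a quotient, must match the block upper-triangular convention. If the opposite convention held, \(\theta\) would be evaluated on a quotient \(V/W\) instead. That quotient is again isomorphic to \(\F_q[T]/(p^{k/d})\) as a module, so the formula is unaffected either way. The remaining check is the bound \(k_i\leq j\), which holds because \(k+|\nu|\leq k+|\mu|=j\).
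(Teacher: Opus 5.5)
Your proposal is correct and follows the paper's proof essentially step for step. You apply the inverse Pieri formula, evaluate each Harish-Chandra induced term at \(C(p^r)\) as a sum of \(\theta(a|_W)\) over invariant \(k\)-subspaces \(W\), and use that these subspaces form a chain with restrictions similar to \(C(p^h)\). The only differences are that you derive this chain directly from the submodules of \(\F_q[T]/(p^r)\) where the paper cites Brickman--Fillmore, and that your subspace-versus-quotient remark is a harmless extra check.
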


\begin{proof}
Apply \cref{lem:stable-pieri-inversion}.  A term indexed by a vertical
\(t\)-strip has sign \((-1)^t\), rank \(k=j-t\), and the form
\[
   \Ind_{P_{k,n-k}}^{\GL_n(q)}(\theta\boxtimes\1),
   \qquad \theta\in\Irr(\GL_k(q)).
\]
There are at most \(2^j\) such terms.

We now evaluate one of them at \(a=C(p^r)\).  The coset space
\(\GL_n(q)/P_{k,n-k}\) is the set of \(k\)-dimensional subspaces of
\(\F_q^n\).  The induced-character formula therefore gives
\begin{equation}
   \Ind_{P_{k,n-k}}^{\GL_n(q)}(\theta\boxtimes\1)(a)
      =\sum_{\substack{W\leq\F_q^n\\
                 \dim W=k,\ aW=W}}
         \theta(a|_W).
   \label{eq:induced-character-invariant-subspaces}
\end{equation}

Since \(a=C(p^r)\) is cyclic and \(p\)-primary, Brickman and Fillmore
\cite[Lemma~2]{BrickmanFillmore1967} show that its invariant subspaces form
the chain
\[
   \ker\bigl(p(a)^h\bigr),\qquad 0\leq h\leq r.
\]
The subspace \(\ker\bigl(p(a)^h\bigr)\) has dimension \(dh\), and the
restriction of \(a\) to it is similar to
\(C(p^h)\).  Thus \eqref{eq:induced-character-invariant-subspaces} equals
\[
   \begin{cases}
      \theta(C(p^{k/d})),&d\mid k,\\
      0,&d\nmid k.
   \end{cases}
\]
For \(k=0\), the unique zero-dimensional subspace contributes \(1\).
Indexing the Pieri terms by \(i\), with \(k_i=j-t\) and
\(\varepsilon_i=(-1)^t\), gives \eqref{eq:pieri-cyclic-evaluation}.
\end{proof}

Combining this evaluation with the centralizer bound gives the level bound.

\begin{proposition}
\label{prop:primary-small-level}
Let \(\chi\in\Irr(\GL_n(q))\) have level \(\ell(\chi)=j<n/2\).  Then
\begin{equation}
   |\chi(C(p^r))|\leq 2^j q^{j/2}.
   \label{eq:primary-character-bound}
\end{equation}
\end{proposition}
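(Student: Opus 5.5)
First I will reduce to the case of true level. Since \(\ell(\chi)=j\), there is a linear character \(\alpha\) of \(\GL_n(q)\) such that \(\chi'=\alpha\chi\) has true level \(j\). Because \(|\alpha(g)|=1\), we have \(|\chi(C(p^r))|=|\chi'(C(p^r))|\). So it suffices to prove \eqref{eq:primary-character-bound} for characters of true level \(j\). Since \(j<n/2\), the hypothesis \(n\geq 2j\) of \cref{lem:pieri-cyclic-evaluation} holds. That lemma writes \(\chi'(C(p^r))\) as a signed sum of at most \(2^j\) terms. Each term is either \(0\), or \(1\) (when \(k_i=0\)), or \(\theta_i(C(p^{k_i/d}))\) with \(\theta_i\in\Irr(\GL_{k_i}(q))\), \(d\mid k_i\) and \(1\le k_i\leq j\).

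Next I bound each nonzero term separately, using the centralizer bound \eqref{eq:character-centralizer-bound} in the group \(\GL_{k_i}(q)\). Put \(h=k_i/d\). The element \(C(p^h)\) is cyclic-primary in \(\GL_{k_i}(q)\), so its centralizer has order \(q^{d(h-1)}(q^d-1)<q^{dh}=q^{k_i}\). Therefore
\[
   |\theta_i(C(p^{k_i/d}))|\leq |C_{\GL_{k_i}(q)}(C(p^{h}))|^{1/2}<q^{k_i/2}\leq q^{j/2}.
\]
The \(k_i=0\) term equals \(1\leq q^{j/2}\).

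Finally, the triangle inequality over at most \(2^j\) terms gives \(|\chi(C(p^r))|\leq 2^jq^{j/2}\). The step that needs the most care is checking the hypotheses of \cref{lem:pieri-cyclic-evaluation}: after the twist the level must be a true level with the same \(j\), and \(n\ge 2j\) must hold. Both follow from the definition of \(\ell\) and from \(j<n/2\). One should also check that each inner \(\theta_i\) is evaluated in a group \(\GL_{k_i}(q)\) where \(C(p^{k_i/d})\) really is the cyclic-primary element. This is exactly what \cref{lem:pieri-cyclic-evaluation} provides via the Brickman–Fillmore chain of invariant subspaces. The rest is the elementary centralizer computation above.
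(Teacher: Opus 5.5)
Your proposal is correct and follows the paper's proof essentially verbatim. You twist to true level \(j\), expand via \cref{lem:pieri-cyclic-evaluation} into at most \(2^j\) signed terms, bound each nonzero term by the square root of the centralizer order \(q^{d(h-1)}(q^d-1)<q^{k_i}\leq q^j\) (the \(k_i=0\) term being \(1\)), and finish with the triangle inequality.
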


For low-level characters, this improves on the general centralizer bound
\(|\chi(a)|<q^{n/2}\).

\begin{proof}
Twist \(\chi\) so that its true level is \(j\).  This does not change
\(|\chi(a)|\).  By \cref{lem:pieri-cyclic-evaluation} and the standard
centralizer bound \eqref{eq:character-centralizer-bound}, every nonzero
summand with \(k_i>0\) satisfies
\[
   \begin{aligned}
   \left|\theta_i(C(p^{k_i/d}))\right|^2
      &\leq
      \left|C_{\GL_{k_i}(q)}\bigl(C(p^{k_i/d})\bigr)\right| \\
      &<q^{k_i}\leq q^j.
   \end{aligned}
\]
The term with \(k_i=0\) equals \(1\).
Thus every summand has absolute value at most \(q^{j/2}\).  Since there are
at most \(2^j\) summands, the triangle inequality proves
\eqref{eq:primary-character-bound}.
\end{proof}

\subsection{An exact ratio below the cuspidal degree}

Recall that a primary character has a single nonempty partition in
Green's parametrization: it is written \(\chi^{\varphi,\lambda}\), with
\(\varphi\) cuspidal for \(\GL_e(q)\) and \(\lambda\vdash n/e\).
The integer \(e\), the degree of the cuspidal color \(\varphi\), is its
cuspidal degree.

\begin{lemma}
\label{lem:primary-exact-ratio}
Let \(\chi\in\Irr(\GL_n(q))\) be primary of cuspidal degree \(e\), and
let \(u\in\GL_n(q)\) be unipotent of support \(s<e\).  Then
\begin{equation}
   \frac{\chi(u)}{\chi(1)}
      =\frac{(-1)^s}{\displaystyle\prod_{i=n-s}^{n-1}(q^i-1)}.
   \label{eq:primary-exact-ratio}
\end{equation}
For \(s=0\), the empty product is interpreted as \(1\).
\end{lemma}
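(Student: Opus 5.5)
The plan is to exploit the fact that \(\chi\) has vanishing Harish-Chandra restriction to every Levi subgroup \(\GL_k(q)\times\GL_{n-k}(q)\) with \(e\nmid k\). By \cref{thm:cuspidal-support}, the cuspidal support of \(\chi\) consists of copies of a single \(\varphi\) of degree \(e\). A nonzero Harish-Chandra restriction to that Levi would therefore force \(e\mid k\). In particular it vanishes for all \(1\le k<e\).

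I turn this into linear relations among the values of \(\chi\) on unipotent classes of small support. For \(1\le k\le s<e\), let \(N_k\) be the unipotent radical of \(P_{k,n-k}\). For any unipotent \(v\in\GL_k(q)\), vanishing of the Harish-Chandra restriction evaluated at \((v,1)\) gives
\[
   \sum_{z\in N_k}\chi\bigl((v\oplus I_{n-k})z\bigr)=0 .
\]
This is the trace of \((v,1)\) on \(W^{N_k}=0\), by the same averaging argument as \eqref{eq:fixed-space-average}. I expect to use the analogous relation with the roles of the two blocks swapped as well.

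Every element \((v\oplus I)z\) is unipotent of support at most \(k\le s\). I will sort these elements by Jordan type using \eqref{eq:unipotent-kernel-dimensions}, which determines the type from the kernel dimensions of powers of \(A\), where \((v\oplus I)z=I+A\). Each such element has support at least \(\supp(v)\). Moreover, its type is at least that of \(v\oplus I\) in the dominance order \eqref{eq:dominance-conjugate-partitions}, and equality of support forces equality of class in the extreme case \(z=1\).

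The result is a triangular system. Take the unknowns to be \(\chi(u)/\chi(1)\) for \(u\) running over unipotent classes of support \(\le s\), ordered by support and then by dominance. Take the equations to be indexed by pairs \((k,v)\). The diagonal coefficient is nonzero, namely the number of \(z\) for which \((v\oplus I)z\) lies in the same class as \(v\oplus I_{n-k}\). The system has the normalization \(\chi(1)/\chi(1)=1\) for the identity. By induction on \(s\), and within fixed \(s\) downward in dominance, this shows that \(\chi(u)/\chi(1)\) for \(\supp(u)<e\) is determined uniquely by \(n\), \(q\) and the class of \(u\). It does not depend on \(\varphi\) or on \(\lambda\).

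Given uniqueness, it suffices to exhibit one function satisfying all these relations, namely a character for which the answer is known. The cleanest choice is a cuspidal character of \(\GL_n(q)\), the case \(e=n\). Its values on unipotents are classical: if \(u\) has \(h\) Jordan blocks, then
\[
   \varphi(u)=(-1)^{n-h}\prod_{i=1}^{h-1}(q^i-1),
   \qquad
   \varphi(1)=\prod_{i=1}^{n-1}(q^i-1).
\]
Since \(h=n-s\), the ratio is exactly \((-1)^s/\prod_{i=n-s}^{n-1}(q^i-1)\), which is \eqref{eq:primary-exact-ratio}. A cuspidal character satisfies the vanishing relations for all \(k<n\), hence in particular for all \(k\le s\). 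So the unique solution of the system is this function, and this proves the lemma, including the trivial case \(s=0\).

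I expect the main obstacle to be the triangularity step. One must check that the equations for \(k\le s\) genuinely separate all unipotent classes of support \(\le s\), including the classes with large Jordan blocks and not only those of type \((2^t,1^{n-2t})\). One must also check that the diagonal coefficients are nonzero. If the \(P_{k,n-k}\) relations with \(v\) ranging over all unipotents of \(\GL_k(q)\) turn out to be insufficient, I will fall back on a different route. On unipotent elements, a primary character of cuspidal degree \(e\) is a signed combination of Deligne–Lusztig Green functions \(Q_{T}\) for tori \(T\) whose cycle type has every part divisible by \(e\). I would then show directly that each such Green function takes the stated normalized value on elements of support \(<e\). The tool would be the fixed-point character formula, as in \eqref{eq:induced-character-invariant-subspaces}, counting \(u\)-stable \(T\)-decompositions.
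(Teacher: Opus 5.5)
Your overall strategy is sound and essentially matches the paper's. Vanishing Harish-Chandra restrictions give linear relations that pin down $\chi(u)/\chi(1)$ on supports below $e$, and a cuspidal $\rho$ supplies the answer (the paper phrases this through $f=\chi/\chi(1)-\rho/\rho(1)$). The gap is the triangularity step: as written, it does not work.

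You take the pivot of the relation indexed by $(k,v)$ to be the class of $v\oplus I_{n-k}$. That class has support at most $k-1$. As you correctly observe, every other term in the relation has at least that support and lies above it in dominance. So with this choice, no relation with $k\le s$ has a support-$s$ class as its pivot. Each relation only expresses a lower value in terms of higher values that are still unknown. Nothing in your argument shows that the values on classes of support exactly $s$ are determined, and that is the case the lemma is about.

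The missing observation is that each relation contains exactly one class of top support $k$. Write $(v\oplus I_{n-k})z-I=A=\begin{pmatrix}M&W\\0&0\end{pmatrix}$ with $M=v-I$. As $z$ runs over $N_k$, $W$ runs over all $k\times(n-k)$ matrices. Then $\operatorname{im}A^i=M^{i-1}(\operatorname{im}M+\operatorname{im}W)$ for $i\geq1$. So whenever $\rank A=k$, one has $\rank A^i=\rank M^{i-1}$ for all $i\geq1$.

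Hence, if $v$ has Jordan type $\alpha\vdash k$ with $\ell(\alpha)$ parts, all support-$k$ terms have the single type $\tau(\alpha)=(\alpha_1+1,\dots,\alpha_{\ell(\alpha)}+1,1^{n-k-\ell(\alpha)})$. This type occurs with positive multiplicity exactly when $\ell(\alpha)\leq n-k$. Moreover, $\alpha\mapsto\tau(\alpha)$ is a bijection onto the unipotent types of support $k$. Induction on $k=1,\dots,s$ alone, with no dominance needed, then determines every value starting from $\chi(1)/\chi(1)=1$. The rest of your argument goes through.

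Once repaired, your route genuinely differs from the paper's:
\begin{itemize}
\item You use only the maximal parabolics $P_{k,n-k}$ with $k\le s$, at the cost of evaluating at nontrivial Levi elements $v\oplus I$.
\item The paper instead uses, for the type $\tau$ of $u$, the flag parabolic with Levi $\prod_i\GL_{\tau'_i}(q)$. Its Harish-Chandra restriction vanishes because $e\nmid\tau'_1=n-s$. The paper then averages over $N_\tau$ at the identity, where every element has type $\leq\tau$ and conjugates of $u$ occur, so dominance induction closes immediately.
\end{itemize}
Your Deligne--Lusztig fallback is not needed.
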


The proof extends the averaging argument in
\cite[proof of Proposition~4.4(c), equations~(4.9)--(4.10)]{LarsenTiepThompson}.

\begin{proof}
Choose a cuspidal character \(\rho\in\Irr(\GL_n(q))\), and set
\[
   f=\frac{\chi}{\chi(1)}-\frac{\rho}{\rho(1)}.
\]
We prove that \(f(u)=0\) by induction on the Jordan type of \(u\),
ordered by dominance (see \cref{sec:unipotent-classes}).
The minimum type \((1^n)\) represents the
identity, and \(f(1)=0\).
For the induction step, assume \(s>0\), let \(\tau\vdash n\) be
the Jordan type of \(u\), and suppose that \(f\) vanishes on every
type \(\mu<\tau\).  Write \(\tau'=(b_1,\ldots,b_k)\) for the conjugate partition
of \(\tau\), so that \(b_1=n-s\).
Choose a flag
\[
   0=V_0<V_1<\cdots<V_k=\F_q^n,
   \qquad \dim(V_i/V_{i-1})=b_i.
\]
Let \(P_\tau=L_\tau N_\tau\) be its stabilizer.  Here
\(L_\tau\cong\prod_i\GL_{b_i}(q)\), and the unipotent radical
\(N_\tau\) consists of the elements acting trivially on every
quotient \(V_i/V_{i-1}\).

The cuspidal support of \(\chi\) consists of \(n/e\) copies of its cuspidal
color \(\varphi\).  If its Harish-Chandra restriction to \(L_\tau\) were
nonzero, choose an irreducible component
\({\eta_1\boxtimes\cdots\boxtimes\eta_k}\), with
\(\eta_i\in\Irr(\GL_{b_i}(q))\).  By reciprocity, \(\chi\) occurs in
Harish-Chandra induction from this component.  Each \(\eta_i\) is itself a
component of induction from its cuspidal support.  By transitivity, the combined
supports therefore give cuspidal inducing data for \(\chi\).  By
\cref{thm:cuspidal-support}, the cuspidal supports of all the \(\eta_i\),
combined, consist of exactly those \(n/e\) copies of \(\varphi\). Thus each
\(\eta_i\) has some number \(m_i\) of copies of \(\varphi\) in its cuspidal
support, giving \(b_i=e m_i\). But \(e\mid n\) and \(0<s<e\), so
\(e\nmid n-s=b_1\). Thus the \(N_\tau\)-fixed space of \(\chi\) is zero.  The
same holds for \(\rho\) by cuspidality.  Applying \eqref{eq:fixed-space-average}
to \(\chi\) and \(\rho\) gives
\[
   \sum_{z\in N_\tau}f(z)=0.
\]

For \(z=I+A\in N_\tau\), we have \(AV_i\subseteq V_{i-1}\), hence
\(V_i\subseteq\ker A^i\).  If \(\mu\) is the Jordan type of \(z\),
then \eqref{eq:unipotent-kernel-dimensions} gives
\[
   \sum_{j=1}^i\mu'_j
      =\dim\ker A^i
      \geq\dim V_i
      =\sum_{j=1}^i\tau'_j
      \qquad(1\leq i\leq k).
\]
For \(i\geq k\), both sums equal \(n\), since \(A^k=0\). Thus
\eqref{eq:dominance-conjugate-partitions} gives \(\mu\leq\tau\). Moreover, a
conjugate of \(u\) lies in \(N_\tau\): place the \(i\)-th vector of each Jordan
chain in the \(i\)-th flag block, and let \(A\) send it to its predecessor. The
vanishing sum $\sum_{z\in N_\tau}f(z)=0$ consists of elements with type strictly
below \(\tau\), which give zero by induction, and elements conjugate to $u$.
This forces \(f(u)=0\).

It follows that \(\chi(u)/\chi(1)=\rho(u)/\rho(1)\).
Green's cuspidal character formula, in the form
\cite[Theorem~2]{Prasad2000}, gives
\[
   \rho(u)=(-1)^s\prod_{i=1}^{n-s-1}(q^i-1),
   \qquad
   \rho(1)=\prod_{i=1}^{n-1}(q^i-1).
\]
Dividing proves \eqref{eq:primary-exact-ratio}.
\end{proof}

The ratio in \eqref{eq:primary-exact-ratio} is independent of the cuspidal
degree \(e>s\). In \cref{prop:general-q-irreducible-difference}, this lets us control
all such degrees together.

\section{Cyclic-primary difference products over arbitrary fields}
\label{sec:general-q-difference}

In this section we prove \cref{thm:intro-primary-cyclic-difference}. We first fix notation
and restate the theorem.

Fix a prime power \(q\), let \(p\in\F_q[T]\) be monic irreducible with
\(p(0)\neq0\), and put \(d:=\deg p\geq1\).  For \(r\geq1\), put
\[
   S_r:=C(p^r)^{\GL_{dr}(q)}.
\]
When \(\PGL_{dr}(q)\) is the conjugating group, we view \(C(p^r)\) as
its image in \(\PGL_{dr}(q)\).
For \(a,b\in S_r\), the element \(a^{-1}b\) is conjugate to \(ba^{-1}\).
Thus
\[
   S_t^{-1}S_t=S_tS_t^{-1},
\]
allowing us to use the two set products interchangeably.

\begin{theorem}[Restatement of
Theorem~\ref{thm:intro-primary-cyclic-difference}]
\label{thm:primary-cyclic-difference}
There exists \(N \in \mathbb{N}\) such that the following
holds.  If \(n=dr\geq
N\), then
\begin{equation}
   \bigl\{x\in\PSL_n(q):\supp(x)\neq1\bigr\}
      \subseteq C(p^r)^{\PGL_n(q)}
         \bigl(C(p^r)^{\PGL_n(q)}\bigr)^{-1}.
   \label{eq:primary-difference-cover}
\end{equation}
If \(r\geq2\), then the remaining support-one elements are covered as
well, and hence
\begin{equation}
   C(p^r)^{\PGL_n(q)}
      \bigl(C(p^r)^{\PGL_n(q)}\bigr)^{-1}=\PSL_n(q).
   \label{eq:primary-difference-all}
\end{equation}
\end{theorem}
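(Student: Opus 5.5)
The argument follows the three-stage outline of the introduction. Fix a target \(x\) and a lift \(\tilde x\in\SL_n(q)\) (up to scalars), and put \(a=C(p^r)\). By the Frobenius criterion, \(x\in SS^{-1}\) as soon as, for some scalar \(z\),
\[
   \Sigma(z\tilde x):=\sum_{\chi\in\Irr(\GL_n(q))}\frac{|\chi(a)|^2\chi((z\tilde x)^{-1})}{\chi(1)}>0 .
\]
I work with \(\GL_n(q)\)-classes throughout and absorb scalars at the end.

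\textbf{Step 1: large support.} The \(q-1\) linear characters contribute exactly \(q-1\) at an element of determinant \(1\). For the remaining characters I split by level.
\begin{itemize}[leftmargin=2em]
\item \emph{Level \(1\le j<n/2\).} Use \cref{prop:primary-small-level}, \(|\chi(a)|^2\le 4^jq^j\), together with \cref{thm:larsen-tiep-character-ratio}, \(|\chi(x)|/\chi(1)\le \chi(1)^{-\sigma s/n}\le q^{-\sigma s j(n-j)/n}\le q^{-\sigma sj/2}\) by \cref{thm:level-dimension-bounds}. The number of level-\(j\) characters is at most \((q-1)\cdot|\Irr(\GL_j(q))|\le q^{j+1}\) or so. The level-\(j\) contribution is therefore at most \((q-1)\,q^{2j+2j\log_q 2}\,q^{-\sigma sj/2}\), which is summable and \(o(1)\) once \(s>B\) for an absolute \(B\).
\item \emph{Level \(\ge n/2\).} Use \(|\chi(a)|^2\le|C(a)|<q^n\), \(\chi(1)>q^{n^2/4-2}\), and the fact that \(|\Irr(\GL_n(q))|\le q^n\). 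Even for \(s\ge 2\) this gives a contribution of order \(q^{2n}q^{-\sigma s n/8}\), which is small when \(s>B\).
\end{itemize}
Hence \(\Sigma>0\) whenever \(\supp(x)>B\).

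\textbf{Step 2: bounded support \(2\le s\le B\), or \(s=1\) with \(r\ge2\).} If the characteristic polynomial of a lift \(\tilde x\) has two distinct irreducible factors, then \cite[Theorem~4.2]{BungerNielsen1999} gives \(\tilde x\in SS^{-1}\) directly. Otherwise bounded support forces \(\tilde x=\lambda u\) with \(u\) unipotent, and we may replace \(x\) by \(u\) in \(\PSL_n(q)\). I then split on \(d\) and \(r\).
\begin{itemize}[leftmargin=2em]
\item \emph{\(d=1\).} Here \(a\) is a scalar times the single Jordan block \(J_n(1)\). I will exhibit \(u=bb'^{-1}\) explicitly, with \(b,b'\) regular unipotent: take \(b\) to be a Jordan block in a suitable adapted basis and \(b'=u^{-1}b\), then check that \(u^{-1}b\) is still cyclic unipotent by a rank computation on \(b'-1\).
\item \emph{\(r\) large, \(d\ge2\).} A similar block construction works: split \(C(p^r)\) into an upper-triangular chain of \(C(p)\)-blocks and place the Jordan blocks of \(u\) within a block of the chain.
\item \emph{\(r\) bounded, so \(d\ge n/r\) is large.} Since \(s\le B<d/2\), all nontrivial Jordan blocks of \(u\) fit inside a \(d\)-dimensional block, so \(u\sim u_0\oplus I_{n-d}\). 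The lifting step is to choose \(b=C(p^r)\) in a block upper-triangular form whose diagonal blocks are \(C(p)\), and to replace the top-left diagonal block \(a_1\) by \(a_1'\) with \(a_1a_1'^{-1}=u_0\) while keeping suitable off-diagonal gluing blocks. One must then verify that the modified matrix is still cyclic, which holds since its minimal polynomial still has degree \(n\) (the gluing is generic), and that the quotient is \(u_0\oplus I\) up to conjugacy.
\end{itemize}
It remains to produce \(u_0\in C(p)^{G_d}(C(p)^{G_d})^{-1}\) inside \(\GL_d(q)\). Since \(C(p)\) is regular elliptic, \(\chi(C(p))\ne0\) only for primary characters whose cuspidal degree \(e\) divides \(d\); in that case \(|\chi(C(p))|\) is bounded by a constant depending only on the number of hooks, and only primary hooks survive. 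For \(e>s\), \cref{lem:primary-exact-ratio} gives the uniform ratio \(\pm\prod_{i=d-s}^{d-1}(q^i-1)^{-1}\). Characters with \(e\le s\le B\) have \(\chi(1)\) huge when \(e\mid d\) and \(d\) is large, so their contribution is negligible. Summing gives a main term \((q-1)\bigl(1+O(q^{-d/2})\bigr)>0\).

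\textbf{Step 3: support \(1\) when \(r\ge2\).} Here the bounded-support constructions still apply because the chain structure of \(C(p^r)\) allows a transvection to be absorbed, as in the \(d=1\) construction. Together with Step 1, this gives \eqref{eq:primary-difference-all}.

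\textbf{Main obstacle.} I expect the lifting lemma in Step 2 to be the hard step: showing that the perturbed block-triangular matrix stays conjugate to \(C(p^r)\), i.e.\ is cyclic with the right elementary divisor, and not conjugate to something like \(C(p^{r-1})\oplus C(p)\). I would first try to control this by computing \(\dim\ker p(b)\), and choosing the off-diagonal blocks so that \(p(b)\) has rank \(n-d\).
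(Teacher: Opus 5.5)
\paragraph{The gap: $d\ge2$ with $r$ large.} Your large-support step, the B\"unger--Nielsen reduction to a unipotent $u$, the $d=1$ case, and the bounded-$r$ case follow the paper. Your one-shot gluing for bounded $r$ also works. With $b=\begin{pmatrix}a_1&W\\0&A\end{pmatrix}$ and $b'=\begin{pmatrix}a_1'&W\\0&A\end{pmatrix}$ one gets $bb'^{-1}=\begin{pmatrix}u_0&(I-u_0)WA^{-1}\\0&I\end{pmatrix}\sim u_0\oplus I$, and cyclicity of each factor is a single nonvanishing linear condition on $W$.

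The genuine gap is the regime $d\geq2$ with $r$ large. This regime contains every bounded $d$, e.g.\ $d=2$, and with it the support-one assertion for $r\geq2$. Splitting $C(p^r)$ into a chain of $C(p)$-blocks is the right frame. But placing the Jordan blocks of $u$ inside one block of the chain cannot work, for three reasons:
\begin{itemize}
\item the nontrivial part of $u$ can have dimension up to $2s>d$, so it need not fit;
\item even when it fits, you would need $u_0\in C(p)^{\GL_d(q)}\bigl(C(p)^{\GL_d(q)}\bigr)^{-1}$ in a dimension too small for the character argument, which needs $d\geq N_0(s)$;
\item for $s=1$ that inner statement is false in every dimension. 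This is Uhlig's obstruction; in your Frobenius sum, the $e>s$ part has size about $q/(q-1)$ when $s=1$ instead of tending to $0$.
\end{itemize}

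The missing idea is a one-layer lift with an \emph{arbitrary} off-diagonal block (\cref{prop:add-primary-layer}). If $x=A^{-1}B$ with $A,B\in S_t$, then for every $Z$ the matrices $A_W=\begin{pmatrix}A&W\\0&a_0\end{pmatrix}$ and $B_W=\begin{pmatrix}B&AZ+W\\0&a_0\end{pmatrix}$ satisfy $A_W^{-1}B_W=\begin{pmatrix}x&Z\\0&I_d\end{pmatrix}$. Some $W$ puts both in $S_{t+1}$, since each fails only for a proportion $q^{-d}$ of the choices of $W$.

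One then decomposes $\F_q^{dt}=V_1\oplus\cdots\oplus V_t$ into $d$-dimensional layers with $(u-I)V_j\subseteq V_{j-1}$. To do this, put consecutive vectors of each Jordan chain of $u$ into consecutive layers. This uses $s+1$ layers, each receiving at most two chain vectors, so it needs $t\geq s+1$ and $d\geq2$. Then build $u$ from $I_d\in S_1^{-1}S_1$ one layer at a time (\cref{lem:general-q-small-unipotents}); the support-one case for $r\geq2$ is the instance $s=1$, $t=r$.

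Your bounded-$r$ gluing is only the $Z=0$ instance. It is the general-$Z$ version that makes the case split $r\geq s+1$ versus $r\leq s$ close up, the latter case forcing $d\geq N/(B-1)$.

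\paragraph{A second error: cuspidal degree $e=1$.} Your analysis of $S_1^{-1}S_1$ mishandles the nonlinear characters of cuspidal degree $e=1$. The twisted unipotent hooks $\chi_{(d-j,1^j)}$ are not negligible. For instance,
\[
\frac{\chi_{(d-1,1)}(u)}{\chi_{(d-1,1)}(1)}=\frac{q^{d-s}-q}{q^d-q}\approx q^{-s}
\]
regardless of $d$. The Larsen--Tiep bound $q^{-\sigma sj(d-j)/d}$ does not make the sum over $j$ small for fixed $s$ and small $\sigma$. So your claimed main term $(q-1)\bigl(1+O(q^{-d/2})\bigr)$ is unjustified.

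The paper instead uses that unipotent characters take nonnegative values on unipotent elements, which gives $E_1(u)\geq1$. Only two ranges then need bounding. For $2\leq e\leq s$ the characters have level $d$ and huge degree. For $e>s$ one uses the exact ratio of \cref{lem:primary-exact-ratio} with total weight $\sum_\chi|\chi(a)|^2=q^d-1$; this is small only because $s\geq2$.
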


We first use character bounds to cover targets of large support, then
construct the factorizations needed for targets of bounded support.
We prove \cref{thm:primary-cyclic-difference} at the end of the section.

\begin{proposition}
\label{prop:constant-support-reduction}
There exists \(B>0\) such that, if \(n=dr\geq8\) and
\(x\in\SL_n(q)\) satisfies \(\supp(x)\geq B\), then
\[
   x\in S_rS_r^{-1}.
\]
\end{proposition}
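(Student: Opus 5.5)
We use the Frobenius criterion from the introduction. With \(a=C(p^r)\) and \(x\in\SL_n(q)\), we have \(x\in S_rS_r^{-1}\) if and only if
\[
   \Sigma(x):=\sum_{\chi\in\Irr(\GL_n(q))}\frac{|\chi(a)|^2\chi(x^{-1})}{\chi(1)}>0 .
\]
The \(q-1\) linear characters \(\alpha\circ\det\) satisfy \(|\chi(a)|=1\) and \(\chi(x^{-1})=1\), since \(\det x=1\). So they contribute exactly \(q-1\). It suffices to show that, once \(\supp(x)\geq B\), the nonlinear characters contribute less than \(q-1\) in absolute value. By \cref{thm:larsen-tiep-character-ratio}, and since \(\supp(x^{-1})=\supp(x)=:s\), each nonlinear term is bounded by \(|\chi(a)|^2\chi(1)^{-\sigma s/n}\). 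We split the nonlinear characters by level \(j=\ell(\chi)\).

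\emph{Low level, \(1\leq j<n/2\).} By \cref{prop:primary-small-level}, \(|\chi(a)|^2\leq 4^jq^j\). By \cref{thm:level-dimension-bounds}, \(\chi(1)\geq q^{j(n-j)}\geq q^{jn/2}\), so \(\chi(1)^{-\sigma s/n}\leq q^{-\sigma sj/2}\). The number of characters of level \(j\) is at most \((q-1)\) twists times the number of true-level-\(j\) characters. These are parametrized by \(\Irr(\GL_j(q))\), which has at most \(q^j\) elements, since the number of conjugacy classes of \(\GL_j(q)\) is at most \(q^j\). The total level-\(j\) contribution is therefore at most
\[
   (q-1)\,4^j q^{2j} q^{-\sigma sj/2}.
\]
Take \(B\) with \(\sigma B/2\geq 2+10\), say. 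Then the term is at most \((q-1)(4q^{-10})^j\leq (q-1)4^{-4j}\), and the sum over \(j\geq1\) is at most \((q-1)/3\). This is comfortably below \(q-1\). The point is that the bound must be uniform in \(q\), which works because \(q\geq2\) turns every saved power of \(q\) into a geometric decay.

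\emph{High level, \(j\geq n/2\).} Here we use the centralizer bound \eqref{eq:character-centralizer-bound}:
\[
   |\chi(a)|^2\leq |C_{\GL_n(q)}(a)|=q^{d(r-1)}(q^d-1)<q^n .
\]
By \cref{thm:level-dimension-bounds}, \(\chi(1)>q^{n^2/4-2}\), hence \(\chi(1)^{-\sigma s/n}<q^{-\sigma s(n/4-2/n)}\). The number of such characters is at most \(k(\GL_n(q))\leq q^n\). The total is then at most
\[
   q^{2n-\sigma s(n/4-2/n)}.
\]
For \(n\geq8\) we have \(n/4-2/n\geq n/8\). Choosing \(B\) large enough that \(\sigma B/8\geq 3\) gives a total of at most \(q^{2n-3n}=q^{-n}\), which is tiny. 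Combining the two ranges, the nonlinear contribution is less than \((q-1)/3+q^{-n}<q-1\), so \(\Sigma(x)>0\).

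\emph{Main obstacle.} The delicate point is the counting in the low-level regime. We need the number of level-\(j\) characters to be at most \(C(q-1)q^{j}\) uniformly in \(j\) and \(q\), so that the decay \(q^{-\sigma sj/2}\) beats the growth \(4^jq^{2j}\) with a constant \(B\) independent of \(q\), \(d\) and \(r\). I would make sure to use the bound \(k(\GL_j(q))\leq q^j\) (or any bound of the form \(q^{O(j)}\), adjusting \(B\) accordingly). I would also handle the boundary \(j=n/2\) with the high-level estimate rather than \cref{prop:primary-small-level}, since that proposition requires \(j<n/2\). Finally, the small cases \(n\geq8\) with \(n\) not huge are covered because every estimate above is explicit in \(n\).
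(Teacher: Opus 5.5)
Your proposal is correct and follows essentially the same route as the paper's proof. Both use the Frobenius sum with the linear characters as the main term. Both split the nonlinear characters at level \(n/2\): below it, \cref{prop:primary-small-level}, the degree bound \(q^{j(n-j)}\), and the class count \(k(\GL_j(q))\leq q^j\); at or above it, the centralizer bound together with \(\chi(1)>q^{n^2/4-2}\). The only cosmetic difference is that you compare the nonlinear contribution with \(q-1\), whereas the paper compares it with \(1\).
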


\begin{proof}
Put \(a=C(p^r)\), and consider the Frobenius sum
\[
   \sum_{\chi\in\Irr(\GL_n(q))}
      \frac{|\chi(a)|^2\chi(x^{-1})}{\chi(1)}.
\]
The \(q-1\) linear characters contribute \(q-1\), so it suffices to show that
the nonlinear contribution has absolute value less than \(1\).
Let \(\sigma\) be as in \cref{thm:larsen-tiep-character-ratio}.  We split the
nonlinear characters at level \(n/2\), using
\cref{prop:primary-small-level} below this threshold and a centralizer bound
at or above it.

\medskip
\noindent\emph{Low levels: \(1\leq j<n/2\).}

Characters of true level \(j\) are parametrized by \(\Irr(\GL_j(q))\).
Allowing determinant twists and using
\cite[Proposition~3.5(2)]{FulmanGuralnick2012}, we get
\[
   \#\{\chi\in\Irr(\GL_n(q)):\ell(\chi)=j\}
      \leq(q-1)|\Irr(\GL_j(q))|
      \leq q^{j+1}.
\]
Moreover,
\cref{thm:level-dimension-bounds} gives
\(\chi(1)\geq q^{j(n-j)}>q^{nj/2}\).  Thus
\cref{thm:larsen-tiep-character-ratio}, together with
\cref{prop:primary-small-level}, bounds the total low-level contribution by
\begin{equation}
   \sum_{1\leq j<n/2}
      q^{j+1}\,4^j q^j\,q^{-\sigma\supp(x)j/2}
   \leq
   \sum_{j\geq1}
      \left(4q^{3-\sigma\supp(x)/2}\right)^j.
   \label{eq:low-level-error}
\end{equation}
For a sufficiently large absolute \(B\), this geometric series is uniformly
smaller than \(1/2\) whenever \(\supp(x)\geq B\).

\medskip
\noindent\emph{High levels: \(j\geq n/2\).}

The centralizer bound \eqref{eq:character-centralizer-bound} gives
\[
   |\chi(a)|^2\leq |C_{\GL_n(q)}(a)|<q^n,
\]
whereas \cref{thm:level-dimension-bounds} gives
\(\chi(1)>q^{n^2/4-2}\), and there are at most \(q^n\) irreducible
characters.  For \(n\geq8\), this gives
\(\chi(1)>q^{n^2/5}\).  Hence
\cref{thm:larsen-tiep-character-ratio} and these
estimates bound their total contribution by
\[
   q^{2n-\sigma\supp(x)n/5}.
\]
For sufficiently large \(B\), this is smaller than \(1/2\).
Thus the nonlinear contribution cannot cancel the linear one, and the
Frobenius criterion proves the assertion.
\end{proof}

It remains to treat targets of bounded support.

We need the hypothesis \(d\geq2\) later, so we first treat \(d=1\) separately.

For \(q\geq4\), the degree-one case follows from Lev's stronger theorem that
the product of two prescribed cyclic classes, one of them triangularizable,
covers every nonscalar matrix of the correct determinant
\cite[Theorem~2]{Lev1994}.  Since our result is uniform in \(q\), we record a
short argument that also covers \(q=2,3\).

\begin{lemma}
\label{lem:regular-unipotent-difference}
Let \(S\) be the conjugacy class of \(J_n(1)\) in \(\GL_n(q)\), let
\(u\in\GL_n(q)\) be unipotent, and put
\(s=\rank(u-I)\).  If \(n\geq3s\), then
\[
   u\in S^{-1}S.
\]
\end{lemma}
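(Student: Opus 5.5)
The plan is to produce an explicit factorization $u=a^{-1}b$ with $a=J_n(1)$ and $b=au$, both regular unipotent. Since $S$ is a conjugacy class and $S^{-1}S$ is invariant under conjugation, we may replace $u$ by any conjugate. The whole argument rests on one elementary criterion.

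The criterion is this: an upper unitriangular matrix $g\in\GL_n(q)$ whose superdiagonal entries $g_{i,i+1}$ are all nonzero is conjugate to $J_n(1)$. To see it, note that $(g-I)^{n-1}$ has $(1,n)$ entry $\prod_i g_{i,i+1}\neq 0$. Hence the nilpotent $g-I$ has a single Jordan block, so $g\sim J_n(1)$. With this in hand, it suffices to conjugate $u$ into an upper unitriangular matrix $I+N$ whose superdiagonal vanishes, i.e.\ $N_{ij}\neq0$ only when $j\geq i+2$. Indeed, for $a=I+E$ with $E=\sum_i E_{i,i+1}$ we get
\[
au=I+E+N+EN .
\]
Here $E$ is supported on the superdiagonal, $N$ on distances $\geq2$, and $EN$ on distances $\geq3$. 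So $au$ is upper unitriangular with every superdiagonal entry equal to $1$, hence lies in $S$, and $u=a^{-1}(au)\in S^{-1}S$. This works uniformly in $q$, including $q=2,3$, because we never need to choose a superdiagonal entry avoiding a prescribed nonzero value.

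The main step is the conjugation, and this is where $n\geq3s$ enters. Let the Jordan type of $u$ be $\tau$, with nontrivial blocks of sizes $k_1,\dots,k_t\geq2$, so $s=\sum_a(k_a-1)$. We realize a block of size $k$ on a window of $2k-1$ consecutive basis vectors $e_c,\dots,e_{c+2k-2}$. On this window, $N$ sends $e_{c+2i}\mapsto e_{c+2i-2}$ for $1\leq i\leq k-1$, kills $e_c$, and kills the intermediate vectors $e_{c+2i-1}$, which contribute trivial $1\times1$ blocks. Every nonzero entry of $N$ then sits at distance exactly $2$ above the diagonal. The windows for different blocks are placed disjointly, and all remaining basis vectors are fixed.

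For the dimension count, $2k-1\leq 3(k-1)$ whenever $k\geq2$, so the windows use at most $\sum_a(2k_a-1)\leq 3s\leq n$ coordinates. The resulting $I+N$ has nontrivial blocks $k_1,\dots,k_t$. Its number of trivial blocks is then forced by $\rank N=s$, so it has exactly the Jordan type $\tau$ and is conjugate to $u$.

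I expect this packing step to be the only delicate point: one must check that interleaving the fixed vectors does not alter the Jordan type, and that the count $2k-1\leq3(k-1)$ is what makes the hypothesis $n\geq3s$ sufficient.
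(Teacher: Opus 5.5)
Your proposal is correct and is essentially the paper's proof. Both arguments interleave $s$ fixed basis vectors into the Jordan chains so that $u-I$ has zero first superdiagonal, observe that $J_n(1)u$ is then unitriangular with all-ones superdiagonal and hence regular unipotent, and conclude $u=J_n(1)^{-1}(J_n(1)u)$. Your count $\sum_a(2k_a-1)\leq 3s\leq n$ matches the paper's observation that at least $n-2s\geq s$ trivial Jordan blocks are available.
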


\begin{proof}
If \(s=0\), then \(u=I=J_n(1)^{-1}J_n(1)\), so the result is immediate.
Put \(M=u-I\) and assume \(s>0\).  Choose a
Jordan basis for \(M\), and let the nontrivial block sizes be
\(b_1,\ldots,b_k\geq2\).  Write the basis of the \(i\)-th such block as
\(v_{i,1},\ldots,v_{i,b_i}\), where
\[
   Mv_{i,j+1}=v_{i,j}\qquad(1\leq j<b_i).
\]
Then
\[
   s=\sum_{i=1}^k(b_i-1),
   \qquad
   \sum_{i=1}^k b_i=s+k\leq2s.
\]
Thus at least \(n-2s\geq s\) Jordan blocks have size one.  Choose distinct
vectors \(w_{i,j}\) from these blocks, one for each pair \((i,j)\) with
\(1\leq j<b_i\).  Reorder the basis by listing, for each \(i\),
\[
   v_{i,1},w_{i,1},v_{i,2},w_{i,2},\ldots,
   w_{i,b_i-1},v_{i,b_i},
\]
followed by the unused vectors from the one-dimensional blocks.  In this
basis, \(M\) is strictly upper triangular and has zero first superdiagonal:
each nonzero entry linking \(v_{i,j+1}\) to \(v_{i,j}\) now skips the
intervening vector \(w_{i,j}\).

Let \(J=J_n(1)=I+L\) in the same basis, so
\(L\) has ones on the first superdiagonal.  We have
\[
   Ju-I=L+M+LM,
\]
while the product \(LM\) of two strictly upper triangular matrices has zero
first superdiagonal.  Hence the first superdiagonal of \(Ju-I\) comes entirely
from \(L\) and consists of ones.  Thus
\[
   \bigl((Ju-I)^{n-1}\bigr)_{1n}=1.
\]
Since \(Ju\) is upper unitriangular, this shows that \(Ju-I\) has nilpotency
index \(n\).  Hence \(Ju\) is conjugate to \(J\).  It
follows that
\[
   u=J^{-1}(Ju)\in S^{-1}S.
\]

\end{proof}

\medskip
For \(d\geq2\), we use two ingredients: lifting factorizations and a
covering result for targets of fixed support using a single irreducible
block. We begin with the lifting construction.

\begin{proposition}
\label{prop:add-primary-layer}
Assume \(d\geq2\).  For every \(t\geq1\), every
\(x\in S_t^{-1}S_t\), and every
\(Z\in\operatorname{Mat}_{dt\times d}(\F_q)\), one has
\[
   \begin{pmatrix}x&Z\\0&I_d\end{pmatrix}
      \in S_{t+1}^{-1}S_{t+1}.
\]
\end{proposition}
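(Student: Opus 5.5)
The plan is to write the required factorization explicitly in block upper triangular form and to reduce the conjugacy condition to a cyclicity condition that only fails on small exceptional sets. Write \(x=a^{-1}b\) with \(a,b\in S_t\), fix \(c\in\GL_d(q)\) similar to \(C(p)\), and look for
\[
   A=\begin{pmatrix}a&Y\\0&c\end{pmatrix},\qquad
   B=\begin{pmatrix}b&W\\0&c\end{pmatrix}.
\]
A direct computation gives
\[
   A^{-1}B=\begin{pmatrix}a^{-1}b&a^{-1}(W-Y)\\0&I_d\end{pmatrix}.
\]
So we take \(W=Y+aZ\), and it remains to choose \(Y\) so that both \(A\) and \(B\) lie in \(S_{t+1}\).

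Both \(A\) and \(B\) have characteristic polynomial \(p^{t+1}\). Hence each is similar to \(C(p^{t+1})\) if and only if it is cyclic, that is, if and only if \(p(A)^t\neq0\). Since \(p(a)^t=0\) and \(p(c)=0\), the matrix \(p(A)^t\) is strictly block upper triangular, and its top-right block is \(\Lambda_a(Y)\) for an \(\F_q\)-linear map \(\Lambda_a\colon\operatorname{Mat}_{dt\times d}(\F_q)\to\operatorname{Mat}_{dt\times d}(\F_q)\). The same holds for \(B\) with a map \(\Lambda_b\). Let \(K_a=\ker\Lambda_a\) and \(K_b=\ker\Lambda_b\). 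The bad set for \(Y\) is then \(K_a\cup(K_b-aZ)\), a subspace together with an affine coset. We need this union to be a proper subset of \(\operatorname{Mat}_{dt\times d}(\F_q)\).

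First, \(K_a\) is proper. The matrix \(C(p^{t+1})\) preserves the subspace \(\ker p(\cdot)^t\), on which it acts as \(C(p^t)\), with quotient \(C(p)\). Conjugating by a suitable block-diagonal matrix therefore realizes it as some \(A\) with our given \(a\) and \(c\), so some \(Y\) gives a cyclic \(A\). The same argument shows \(K_b\) is proper.

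Properness alone is not enough, since over \(\F_2\) a hyperplane and a complementary coset cover the space. This is where \(d\geq2\) enters. I will show that \(K_a\) and \(K_b\) have codimension at least \(d\geq 2\). Then
\[
   |K_a\cup(K_b-aZ)|\leq 2q^{d^2t-2}<q^{d^2t},
\]
and a good \(Y\) exists.

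For the codimension bound, the idea is that \(\Lambda_a\) is compatible with the \(\F_q[T]/(p)\cong\F_{q^d}\)-module structure. Right multiplication \(Y\mapsto Yc\) preserves \(K_a\): conjugating \(A\) by \(\operatorname{diag}(I,c)\) replaces \(Y\) by \(Yc\) and does not change cyclicity. Since \(\F_q[c]\cong\F_{q^d}\) is a field acting \(\F_q\)-linearly on \(\operatorname{Mat}_{dt\times d}(\F_q)\), the subspace \(K_a\) is an \(\F_{q^d}\)-subspace. As it is proper, its \(\F_q\)-codimension is a positive multiple of \(d\). The same argument applies to \(K_b\).

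I expect the main obstacle to be making this module-structure argument clean. One must check that \(\operatorname{Mat}_{dt\times d}(\F_q)\) is a genuine \(\F_{q^d}\)-vector space via right multiplication by \(\F_q[c]\); this holds because \(\F_q[c]\) is a field, since \(p\) is irreducible. One must also check that cyclicity of \(A\) depends only on \(Y\) up to this action, which follows from the conjugation invariance above. If this route runs into trouble, the fallback is to compute \(\Lambda_a\) explicitly in a basis adapted to \(a=C(p^t)\), identify the top-right block of \(p(A)^t\) as \(\sum_{i}a^{i}Y c^{\,k-i}\)-type expressions, and read off that its image has \(\F_q\)-dimension at least \(d\).
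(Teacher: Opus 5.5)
Your proof is correct. Its skeleton is the same as the paper's: the same block upper-triangular lifts with diagonal blocks $(a,c)$ and $(b,c)$, the same top-right choice $W=Y+aZ$, and the same union bound with each bad set of density at most $q^{-d}\le 1/4$.

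Where you differ is in how you bound a single bad set.

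\emph{The paper} extends scalars to $\F_{q^d}$:
\begin{itemize}
\item It fixes a root $\beta$ of $p$, a left $\beta$-eigenvector $\ell_A$ of $A$, and a $\beta$-eigenvector $v$ of $a_0$.
\item It proves that the block matrix lies in $S_{t+1}$ iff $\ell_A^{\mathsf T}Wv\neq0$. This is the one-dimensional $\beta$-eigenspace criterion, transported to the other roots by Frobenius.
\item It shows that the $\F_q$-linear map $W\mapsto\ell_A^{\mathsf T}Wv$ onto $\F_{q^d}$ is surjective, which gives exactly a proportion $q^{-d}$ of bad $W$.
\end{itemize}

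\emph{You} stay over $\F_q$:
\begin{itemize}
\item Cyclicity is equivalent to $p(A)^t\neq0$, and the only possibly nonzero block of $p(A)^t$ is $\F_q$-linear in $Y$.
\item The kernel is proper because $C(p^{t+1})$ can be put in the required block form using its invariant subspace $\ker p(\cdot)^t$, followed by a block-diagonal conjugation.
\item Conjugation by $\operatorname{diag}(I,c)$ sends $Y$ to $Yc$ and preserves cyclicity. So the kernel is an $\F_q[c]\cong\F_{q^d}$-subspace, and properness upgrades to $\F_q$-codimension at least $d$.
\end{itemize}

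Your version avoids eigenvectors over the extension field and the Galois bookkeeping. It also makes transparent why $d\ge2$ matters: as you note, a hyperplane and a complementary coset can cover the space when $q=2$. The cost is that you get only an inequality rather than the paper's explicit test and exact proportion, but the inequality is all the proposition needs. Your fallback plan is not needed.
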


\begin{proof}
Fix a root \(\beta\in\F_{q^d}\) of \(p\), and choose \(a_0\in S_1\).
For \(A\in S_t\), let \(\ell_A\in(\F_{q^d})^{dt}\) be a nonzero
\(\beta\)-eigenvector of \(A^{\mathsf T}\).  Let
\(v\in(\F_{q^d})^d\) be a nonzero \(\beta\)-eigenvector of \(a_0\).
We first show that, for every
\(W\in\operatorname{Mat}_{dt\times d}(\F_q)\),
\begin{equation}
   \begin{pmatrix}A&W\\0&a_0\end{pmatrix}\in S_{t+1}
   \quad\Longleftrightarrow\quad
   \ell_A^{\mathsf T}Wv\neq0.
   \label{eq:one-layer-cyclicity-test}
\end{equation}

Since \(A\) is conjugate to \(C(p^t)\), over \(\F_{q^d}\) it has exactly one
Jordan block of size \(t\) for the eigenvalue \(\beta\).  Hence \(A-\beta I\)
has one-dimensional left kernel, spanned by \(\ell_A^{\mathsf T}\), and
therefore
\[
   \operatorname{im}(A-\beta I)=\ell_A^\perp.
\]
A \(\beta\)-eigenvector of the displayed block matrix has the form
\((y,cv)\), where
\[
   (A-\beta I)y=-cWv.
\]
For \(c=0\) this gives the usual one-dimensional eigenspace of \(A\).  An
eigenvector with \(c\neq0\) exists exactly when \(\ell_A^{\mathsf T}Wv=0\). Thus
the block matrix has a one-dimensional \(\beta\)-eigenspace exactly when
\(\ell_A^{\mathsf T}Wv\neq0\).  The block matrix has characteristic polynomial
\(p^{t+1}\).  For the root \(\beta^{q^i}\), the corresponding condition is
\((\ell_A^{\mathsf T}Wv)^{q^i}\neq0\), since \(A,a_0,W\) are defined over
\(\F_q\). Hence this condition holds simultaneously at all roots of \(p\), which
is equivalent to having one Jordan block at every root.  This proves
\eqref{eq:one-layer-cyclicity-test}.

The \(\F_q\)-linear map
\[
   W\longmapsto\ell_A^{\mathsf T}Wv
   \quad\text{from }\operatorname{Mat}_{dt\times d}(\F_q)
      \text{ to }\F_{q^d}
\]
is surjective.  Indeed, \(a_0v=\beta v\) and \(a_0\) has entries in \(\F_q\), so
the \(\F_q\)-span of the coordinates of \(v\) is stable under multiplication by
\(\beta\).  It is nonzero and \(p\) is irreducible, so it must be all of
\(\F_{q^d}\).  Choose a nonzero coordinate of \(\ell_A\) and vary only the
corresponding row of \(W\).  The value \(\ell_A^{\mathsf T}Wv\) then ranges over
all of \(\F_{q^d}\).  Consequently, exactly a proportion \(q^{-d}\) of the
blocks \(W\) fail the cyclicity test in \eqref{eq:one-layer-cyclicity-test}.

Now choose \(x=A^{-1}B\) with \(A,B\in S_t\).  For a
\(dt\)-by-\(d\) matrix \(W\), put
\[
   A_W:=\begin{pmatrix}A&W\\0&a_0\end{pmatrix},
   \qquad
   B_W:=\begin{pmatrix}B&AZ+W\\0&a_0\end{pmatrix}.
\]
By \eqref{eq:one-layer-cyclicity-test}, the proportion of choices of \(W\)
for which \(A_W\notin S_{t+1}\) or \(B_W\notin S_{t+1}\) is at most
\(
   2q^{-d}\leq\frac12<1.
\)
Choose \(W\) outside this union.  Then \(A_W,B_W\in S_{t+1}\), while
\[
   A_W^{-1}B_W=\begin{pmatrix}x&Z\\0&I_d\end{pmatrix},
\]
which proves the proposition.
\end{proof}

We first apply this proposition to unipotent targets with
\(\rank(u-I)<t\).

\begin{lemma}
\label{lem:general-q-small-unipotents}
Assume \(d\geq2\).  Let \(u\in\GL_{dt}(q)\) be unipotent, and put
\(s=\rank(u-I)\geq1\).  If \(t\geq s+1\), then
\[
   u\in S_t^{-1}S_t.
\]
\end{lemma}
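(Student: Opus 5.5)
We argue by induction on \(t\), with the lifting \cref{prop:add-primary-layer} as the inductive step. The idea is to put \(u\) in a block upper triangular form
\[
   u\sim\begin{pmatrix}x&Z\\0&I_d\end{pmatrix},
   \qquad x\in\GL_{d(t-1)}(q)\ \text{unipotent},
\]
with \(\rank(x-I)\le s\). If \(t-1\ge s+1\), the induction hypothesis gives \(x\in S_{t-1}^{-1}S_{t-1}\). \cref{prop:add-primary-layer} then yields \(u\in S_t^{-1}S_t\), and conjugation preserves \(S_t^{-1}S_t\).

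\emph{Step 1: peeling off \(d\) fixed coordinates.} Let \(M=u-I\), of rank \(s\), so \(\dim\ker M=dt-s\). We need a \(u\)-invariant subspace \(V\) of codimension \(d\) such that \(u\) acts trivially on \(\F_q^{dt}/V\). This holds if and only if \(\operatorname{im}M\subseteq V\). Choose \(V\supseteq\operatorname{im}M\) of codimension \(d\); this is possible since \(s\le dt-d\), which follows from \(t\ge s+1\) and \(d\ge 2\). Such a \(V\) is automatically invariant, because \(uV\subseteq V+\operatorname{im}M=V\). In a basis adapted to \(V\), the matrix \(u\) has the displayed form with \(x=u|_V\). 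Since \(x-I=M|_V\), we get \(\rank(x-I)\le s\).

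\emph{Step 2: controlling the rank of \(x-I\).} The induction needs \(\rank(x-I)\le t-2\), or else \(x=I\). Two cases arise.
\begin{itemize}
\item If \(s\le t-2\), then \(\rank(x-I)\le s\le t-2\), so the induction hypothesis applies at \(t-1\) whenever \(\rank(x-I)\ge1\).
\item If \(s=t-1\), which is the tight case, we must choose \(V\) so that \(\rank(M|_V)\le s-1\). To arrange this, take a nonzero vector \(w\) with \(Mw\ne0\) and let \(V\) be a codimension-\(d\) subspace containing \(\operatorname{im}M\) but avoiding \(w\). Then \(M|_V\) loses at least one dimension of image relative to \(M\). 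This works as long as \(\operatorname{im}M+\ker M\ne\F_q^{dt}\)-type dimension counts allow it; since \(d\ge2\) and \(\dim\ker M\) is large, there is room.
\end{itemize}
More carefully, \(\rank(M|_V)=\dim V-\dim(V\cap\ker M)\). It therefore suffices to pick \(V\) containing both \(\operatorname{im}M\) and a large part of \(\ker M\), so that \(\dim(V\cap\ker M)\ge\dim\ker M-d+1\). This is achieved by taking \(V=\operatorname{im}M+K_0\) extended to codimension \(d\) inside \(\ker M+\operatorname{im}M\), plus one extra vector outside \(\ker M\) if needed. I expect this dimension bookkeeping to be the main obstacle and would check it on Jordan types first.

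\emph{Step 3: base cases.} If \(x=I\), we use that \(I\in S_{t-1}^{-1}S_{t-1}\) trivially, which also covers \(t-1=1\). So every case eventually reduces either to \(x=I\), or to \(x\) of rank \(s'\le t-2\) at level \(t-1\), which is a valid instance of the statement. Equivalently, one can run the recursion down to \(t=1\), where only \(x=I\) occurs: \(t\ge s+1\) means that after \(s\) peelings, each reducing the rank by one, \(x\) becomes the identity. So the cleanest plan is to peel \(d\) coordinates so that the rank drops by exactly one at each step while \(\rank\ge1\), and to peel arbitrarily once \(x=I\). This requires \(t\ge s+1\) layers, matching the hypothesis.
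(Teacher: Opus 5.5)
Your top-down peeling is sound in outline and differs from the paper's route. The paper builds the whole decomposition $\F_q^{dt}=V_1\oplus\cdots\oplus V_t$ at once: it lays the Jordan chains of $u-I$ end to end with one-vector overlaps, so each layer holds at most $2\le d$ chain vectors and the last occupied index is $s+1\le t$. It then pads the layers with fixed vectors and inducts upward from $u_1=I_d$, with no case split. Your Steps 1 and 3 are fine. The gap is the tight case $s=t-1$ of Step 2, which you leave unproved. This is where the real work lies, since repeated peeling in the non-tight case eventually lands there.

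The ``avoid $w$'' argument is false. Take $d=t=2$ with $Mv_2=v_1$ and $Mv_1=Mv_3=Mv_4=0$. The subspace $V=\langle v_1,v_2+v_3\rangle$ contains $\operatorname{im}M$, has codimension $2$, and avoids $w=v_2$. Yet $\rank(M|_V)=1$, so $x\neq I$ at level $1$ and the induction stalls. Your refined criterion $\dim(V\cap\ker M)\ge\dim\ker M-d+1$ is correct, but the recipe you give for meeting it is unchecked and cannot be carried out as stated. The subspace $\ker M+\operatorname{im}M$ has codimension equal to the number $k$ of nontrivial Jordan blocks, and $k$ can exceed $d$. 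For example, take $u$ with four blocks $J_2(1)$, $d=2$, $t=5$: then $V$ cannot be extended to codimension $d$ inside that subspace, and one extra vector is not enough. Above all, the sketch never invokes that $M$ is nilpotent, which is exactly what makes a suitable $V$ exist.

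The missing observation is this. Since $M\neq0$ is nilpotent with index $c\ge2$, we have $0\neq\operatorname{im}M^{c-1}\subseteq\operatorname{im}M\cap\ker M$. Hence $\ker M+\operatorname{im}M$ is a proper subspace and lies in some hyperplane $H$. Choose any $V$ with $\operatorname{im}M\subseteq V\subseteq H$ and $\operatorname{codim}V=d$; this is possible because $s\le d(t-1)$. Then $V+\ker M\subseteq H$, so
\[
   \rank(M|_V)=\dim V-\dim(V\cap\ker M)=\dim(V+\ker M)-\dim\ker M\le(dt-1)-(dt-s)=s-1 .
\]
With this choice at every step, the rank drops by at least one per peeling, and your induction via \cref{prop:add-primary-layer} closes.

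Compared with the paper, your repaired route is coordinate-free, and its flag construction does not even need $d\ge2$; only the lifting proposition does. The paper's explicit Jordan-chain layout avoids the tight-case analysis altogether.
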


To apply \cref{prop:add-primary-layer} successively, we construct a
decomposition
\[
   \F_q^{dt}=V_1\oplus\cdots\oplus V_t,
   \qquad \dim V_j=d,
\]
such that, for \(N=u-I\),
\[
   N(V_1)=0,
   \qquad N(V_j)\subseteq V_{j-1}
   \quad(2\leq j\leq t).
\]
Then, relative to
\((V_1\oplus\cdots\oplus V_j)\oplus V_{j+1}\), the restriction of \(u\)
has the block form in \cref{prop:add-primary-layer} for every \(j<t\).

\begin{proof}
Put \(N=u-I\).  Choose Jordan chains
\[
   v_{i,1},\ldots,v_{i,b_i}
   \qquad(1\leq i\leq k)
\]
for the nontrivial blocks of \(N\), oriented so that
\[
   Nv_{i,1}=0,
   \qquad
   Nv_{i,j}=v_{i,j-1}
   \quad(2\leq j\leq b_i),
\]
where \(b_i\geq2\) and
\[
   \sum_{i=1}^k(b_i-1)=\rank N=s.
\]
For each \(i\), set
\[
   r_i:=1+\sum_{h<i}(b_h-1),
\]
and, for every \(j\) with \(1\leq j\leq t\), let \(V_j\) initially be the span
of the vectors \(v_{i,h}\) for which \(r_i+h-1=j\).  Since
\(r_{i+1}=r_i+b_i-1\), only the last vector of one chain and the first vector of
the next can lie in the same \(V_j\).  Hence \(\dim V_j\leq2\leq d\), and the
largest index that occurs is
\[
   r_k+b_k-1=1+\sum_{i=1}^k(b_i-1)=s+1\leq t.
\]
The remaining \(dt-(s+k)\) Jordan-basis vectors come from identity blocks
of \(u\) and are killed by \(N\).  Since
\[
   \sum_{j=1}^t\bigl(d-\dim V_j\bigr)=dt-(s+k),
\]
we can use these remaining vectors to enlarge each \(V_j\) to dimension \(d\).
The resulting subspaces form a direct-sum decomposition of \(\F_q^{dt}\), and
the Jordan-chain relations give
\[
   N(V_1)=0,
   \qquad N(V_j)\subseteq V_{j-1}
   \quad(2\leq j\leq t).
\]

For \(1\leq j\leq t\), let \(u_j\) be the matrix of the restriction of \(u\)
to \(V_1\oplus\cdots\oplus V_j\).  We prove inductively that
\[
   u_j\in S_j^{-1}S_j.
\]
The base case is \(u_1=I_d\in S_1^{-1}S_1\).  Suppose the assertion holds
for some \(j<t\).  Relative to
\((V_1\oplus\cdots\oplus V_j)\oplus V_{j+1}\),
\[
   u_{j+1}
      =\begin{pmatrix}
          u_j&Z_j\\
          0&I_d
        \end{pmatrix},
\]
where \(Z_j\) is the matrix of
\(N|_{V_{j+1}}\colon V_{j+1}\to V_1\oplus\cdots\oplus V_j\).  Applying
\cref{prop:add-primary-layer} with \(t=j\), \(x=u_j\), and \(Z=Z_j\) gives
\[
   u_{j+1}\in S_{j+1}^{-1}S_{j+1}.
\]
Induction gives \(u_t\in S_t^{-1}S_t\).  Since \(u_t\) is the matrix of
\(u\) in the chosen basis and the product \(S_t^{-1}S_t\) is conjugacy
invariant, the lemma follows.
\end{proof}

We next cover unipotent targets of fixed support by \(S_1^{-1}S_1\), when
\(\deg p\) is sufficiently large.

\begin{proposition}
\label{prop:general-q-irreducible-difference}
For every fixed \(s\geq2\) there is an integer \(N_0\geq8\), depending only on
\(s\), such that the following holds for every prime power \(q\).
Let \(p\in\F_q[T]\) be monic irreducible of degree \(n\geq N_0\), with
\(p(0)\neq0\), and put \(S_1=C(p)^{\GL_n(q)}\).
If \(u\in\GL_n(q)\) is unipotent with \(\rank(u-I)=s\), then
\[
   u\in S_1^{-1}S_1.
\]
\end{proposition}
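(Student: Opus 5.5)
We use the Frobenius criterion with \(a=C(p)\) and the target \(u\):
\[
   u\in S_1^{-1}S_1
   \iff
   \sum_{\chi\in\Irr(\GL_n(q))}\frac{|\chi(a)|^2\chi(u^{-1})}{\chi(1)}>0 .
\]
Since \(u^{-1}\) is unipotent of the same Jordan type as \(u\), we may replace \(\chi(u^{-1})\) by \(\chi(u)\). The first step is to find which characters survive at \(a\). As \(a\) is regular semisimple and elliptic (its centralizer is the torus \(\F_{q^n}^\times\)), the Deligne--Lusztig / Green character formula shows that \(\chi(a)\neq0\) only when \(\chi\) is primary with cuspidal color \(\varphi\) of degree \(e\mid n\) and \(\lambda\vdash n/e\) a hook. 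We can also see the vanishing directly from the paper's tools. By \cref{lem:pieri-cyclic-evaluation}-type reasoning, induced characters from proper parabolics vanish at \(a\), because \(a\) has no nontrivial invariant subspaces. The surviving values have absolute value \(|\chi(a)|\leq c\cdot\) (number of \(\F_{q^e}\)-characters), with \(|\chi(a)|^2\) bounded by a constant for \(e=n\) (cuspidals give \(|\chi(a)|\le n\), indeed a sum of \(n\) roots of unity up to sign) and small for hooks.

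Next we split the surviving characters by cuspidal degree \(e\).

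\emph{Large cuspidal degree, \(e>s\).} \cref{lem:primary-exact-ratio} gives the exact ratio
\[
   \frac{\chi(u)}{\chi(1)}=\frac{(-1)^s}{\prod_{i=n-s}^{n-1}(q^i-1)}
\]
uniformly for all such \(\chi\). This part of the sum therefore equals this ratio times \(\sum_{e>s}|\chi(a)|^2\). That sum is at most \(\sum_\chi|\chi(a)|^2=|C(a)|=q^n-1\), so the contribution has absolute value at most about \(q^{n}\cdot q^{-sn+O(s^2)}\leq q^{-n+O(s^2)}\), which is tiny since \(s\geq2\). This is where \(s\ge2\) is used: for \(s=1\) the bound would only be of order \(1\).

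\emph{Small cuspidal degree, \(e\leq s\) and \(e\mid n\).} These are primary hook characters \(\chi^{\varphi,(m-j,1^j)}\) with \(m=n/e\). The linear characters (\(e=1\), \(j=0\)) give the main term \(q-1\). For the remaining ones we need two facts. First, \(|\chi(a)|\) is bounded by an absolute constant (\(\pm\) a single value of \(\varphi\)-related sign, via the Green formula at an elliptic element). Second, their number is at most \(\sum_{e\le s}q^{e}\cdot n\). Each of them has \(\chi(1)\geq q^{c n}\) unless \(j\) is near \(0\) or near \(m-1\) with \(e=1\). We bound \(|\chi(u)|/\chi(1)\) using \cref{thm:larsen-tiep-character-ratio}:
\[
   \frac{|\chi(u)|}{\chi(1)}\leq \chi(1)^{-\sigma s/n}\leq q^{-\sigma s\,c'}.
\]
With fixed \(s\) this is only a constant saving, which is insufficient. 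So for these characters we instead use level: a hook \((m-j,1^j)\) with \(e=1\) has level \(\min(j,\dots)\) up to twist, and for level \(\ell\) the unipotent ratio at support \(s\) is \(O(q^{-s\cdot\min(\ell,n-\ell)\cdot c})\)-ish. Concretely, we evaluate \(\chi(u)\) via \cref{lem:stable-pieri-inversion}, which expresses it through counts of \(u\)-invariant subspaces. This gives \(|\chi(u)|/\chi(1)\ll 2^\ell q^{-\ell}\) for small \(\ell\), and the degree bound \(q^{n^2/4}\) handles the middle range. For \(e\geq2\) hooks the level is \(n\), so \cref{thm:level-dimension-bounds} gives \(\chi(1)>q^{n^2/4-2}\), and even the trivial bound \(|\chi(u)|\le |C(u)|^{1/2}\le q^{(sn+n)/2}\) suffices.

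The main obstacle is the \(e=1\) hook characters of low level (twists of the unipotent hooks \((n-j,1^j)\), \(1\le j\) small, and their duals near \(j=n-1\), e.g.\ Steinberg-like). Here \(|\chi(a)|=1\), so they must be controlled purely through \(\chi(u)/\chi(1)\). We would compute these exactly: the hook \((n-j,1^j)\) restricted to unipotents is \(\wedge^j\) of the reduced permutation module in the Grothendieck group. Its value at \(u\) is an alternating sum of numbers of \(u\)-fixed partial flags, giving a ratio \(O(q^{-js})\cdot 2^j\). We then sum \(\sum_{j\ge1}(q-1)\,2^jq^{-js}<q-1\) for \(n\geq N_0(s)\), using \(s\ge2\) to beat the factor \(q-1\) from twists. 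Handling \(j\) near \(n\) by the symmetry \(\chi\mapsto\chi\otimes\mathrm{St}\)-type duality (Alvis--Curtis, which changes \(\chi(u)\) only by a sign at a fixed unipotent up to explicit factors) completes the positivity.
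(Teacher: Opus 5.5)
Your plan follows the paper's: the Frobenius sum, reduction to primary hooks at the elliptic element $C(p)$, and \cref{lem:primary-exact-ratio} for cuspidal degrees $e>s$. That last sector is handled exactly as in the paper. However, the sector you correctly single out as the crux, the $e=1$ hooks, is never actually closed.

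\textbf{The $e=1$ sector.} Take your bound $|\chi_{(n-j,1^j)}(u)|/\chi_{(n-j,1^j)}(1)\ll 2^jq^{-js}$ even with implied constant $1$. At $q=2$, $s=2$ it gives $\sum_{j\geq1}2^jq^{-js}=1$, so it does not beat the main term. This sum is independent of $n$, so no choice of $N_0$ helps. The proposition must hold uniformly in $q$, including $q=2$, which is exactly the case \cref{thm:main-binary} needs. Using $s\geq2$ to ``beat the factor $q-1$'' is also not meaningful: since $\det u=1$, the $q-1$ determinant twists multiply the main term and each hook term by the same factor.

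The missing idea is the one the paper uses. Unipotent characters of $\GL_n(q)$ take nonnegative values on unipotent elements (there they are modified Kostka--Foulkes polynomials in $q$). So after normalizing by $q-1$, the $e=1$ sector equals $1+\sum_{j\geq1}\chi_{(n-j,1^j)}(u)/\chi_{(n-j,1^j)}(1)\geq1$ and needs no upper bound at all.

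Your Pieri idea can also be rescued, but only by exact evaluation rather than the triangle inequality. For $j\leq n/2$, \cref{lem:stable-pieri-inversion} gives $\chi_{(n-j,1^j)}=\sum_{t=0}^{j}(-1)^t\Ind_{P_{j-t,n-j+t}}^{\GL_n(q)}(\mathrm{St}_{j-t}\boxtimes\1)$. The Steinberg character vanishes at nontrivial unipotents, so only subspaces of $\ker(u-I)$ contribute. This gives $\chi_{(n-j,1^j)}(u)=\sum_{k=0}^{j}(-1)^{j-k}q^{\binom{k}{2}}\binom{n-s}{k}_q=q^{\binom{j+1}{2}}\binom{n-s-1}{j}_q\geq0$, whose ratio to the degree is at most $q^{-js}$.

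Two side claims in this part are off. Describing the hook as ``$\wedge^j$ of the reduced permutation module'' is a symmetric-group analogy that fails for $\GL_n(q)$, since the degrees do not match. The Alvis--Curtis step is unnecessary: hooks with $j\geq n/2$ have $\chi(1)>q^{n^2/4-2}$ and are handled by \cref{thm:larsen-tiep-character-ratio}.

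\textbf{The sector $2\leq e\leq s$.} Your ``trivial bound'' $|\chi(u)|\leq|C(u)|^{1/2}\leq q^{(sn+n)/2}$ is false. The element $u$ acts as the identity on a subspace of dimension at least $n-2s$ that has a $u$-stable complement. Hence $C_{\GL_n(q)}(u)\supseteq\GL_{n-2s}(q)$, and $|C(u)|^{1/2}$ is of order at least $q^{(n-2s)^2/2}$. For $n$ large compared with $s$ this exceeds the only available lower bound $\chi(1)>q^{n^2/4-2}$, so the centralizer bound gives nothing here.

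What works is the tool you discarded, and it is what the paper does. These characters have level $n$, so \cref{thm:level-dimension-bounds} gives $\chi(1)>q^{n^2/5}$. Then \cref{thm:larsen-tiep-character-ratio} yields $|\chi(u)|/\chi(1)\leq q^{-\sigma sn/5}$, a saving exponential in $n$. This beats the weight $\frac{1}{q-1}\sum|\chi(a)|^2\leq 2nq^{e-1}$ uniformly in $q$.

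Your remark that Larsen--Tiep gives ``only a constant saving'' is true only for characters of bounded level. Those are precisely the $e=1$ hooks, which is why that sector needs the positivity argument above.
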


\begin{proof}[Proof of Proposition~\ref{prop:general-q-irreducible-difference}]
Put \(a=C(p)\) and \(G=\GL_n(q)\).  Since \(u\) and \(u^{-1}\) have the
same unipotent Jordan type, all character values at \(u\) are real.
Normalize the Frobenius sum by its \(q-1\) linear-character terms:
\[
   F_a(u):=\frac1{q-1}\sum_{\chi\in\Irr(G)}
      |\chi(a)|^2\frac{\chi(u)}{\chi(1)}.
\]
It is enough to prove \(F_a(u)>0\).
The linear characters contribute \(1\).  We will show that the other
terms of cuspidal degree \(1\) are nonnegative, and that the total
contribution from larger cuspidal degrees has absolute value less than
\(2/5\) for \(n\geq N_0\).

Since \(a=C(p)\) has irreducible characteristic polynomial, Green's formula
\cite[Proposition~4.7(i)--(iii)]{LewisReinerStanton2014} says that the only
characters that can be nonzero at \(a\) are the primary hooks
\[
   \chi^{\varphi,(n/e-j,1^j)},
   \qquad e\mid n,\quad 0\leq j<n/e,
\]
where \(\varphi\) is a cuspidal character of \(\GL_e(q)\).  Their values at
\(a\) have absolute value at most \(e\), and exactly \(1\) when \(e=1\).
Let \(E_e(u)\) be the contribution to \(F_a(u)\) from characters of
cuspidal degree \(e\), so that \(F_a(u)=\sum_{e\mid n}E_e(u)\).

\emph{The principal contribution \(e=1\).}
These characters are the \(q-1\) determinant twists of each unipotent
hook character, and every such character satisfies \(|\chi(a)|^2=1\).
Since \(\det u=1\), the twists agree on \(u\) and have the same degree,
so summing over them cancels the normalizing factor \(1/(q-1)\).
Every unipotent irreducible character of \(\GL_n(q)\) takes nonnegative
values on unipotent elements; see
\cite[Proposition~3.4.14(i),(iii)]{Haiman2002}.
Thus the trivial character contributes \(1\), and all remaining terms
are nonnegative:
\[
   E_1(u)=1+\sum_{j=1}^{n-1}
      \frac{\chi_{(n-j,1^j)}(u)}{\chi_{(n-j,1^j)}(1)}
      \geq1.
\]

\emph{Bounded cuspidal degrees \(2\leq e\leq s\).}
There are at most \((q^e-1)/e\) cuspidal characters of \(\GL_e(q)\)
(see \cite[Section~3.4]{LewisReinerStanton2014}),
and \(n/e\) hooks for each.  Hence
\begin{equation}
   \frac1{q-1}\sum_{\substack{\chi\text{ primary hook}\\
                             \text{of cuspidal degree }e}}
      |\chi(a)|^2
      \leq e^2\cdot\frac ne\cdot\frac{q^e-1}{e(q-1)}
      \leq2nq^{e-1}.
   \label{eq:irreducible-sector-weight}
\end{equation}
For \(e\geq2\), these primary characters have level \(n\), as observed
in \cref{sec:characters-green}.  Hence \cref{thm:level-dimension-bounds} gives
\begin{equation}
   \chi(1)>q^{n^2/4-2}\geq q^{n^2/5}
   \qquad(n\geq8).
   \label{eq:primary-degree-lower-bound}
\end{equation}

Let \(\sigma>0\) be as in \cref{thm:larsen-tiep-character-ratio}.
Since \(u\in\SL_n(q)\), \eqref{eq:LT-ratio-GL} gives
\[
   \frac{|\chi(u)|}{\chi(1)}
      \leq\chi(1)^{-\sigma s/n}
      \leq q^{-\sigma sn/5}
\]
uniformly in \(q\) and \(e\geq2\).
Combining this with \eqref{eq:irreducible-sector-weight} gives
\begin{equation}
   \sum_{\substack{e\mid n\\2\leq e\leq s}}|E_e(u)|
      \leq2snq^{s-1-\sigma sn/5}<\frac15
   \label{eq:bounded-cuspidal-error}
\end{equation}
for all \(n\geq N_0\), after taking \(N_0\) sufficiently large depending
only on the fixed \(s\).  

\emph{All remaining cuspidal degrees \(e>s\).}
By \cref{lem:primary-exact-ratio}, every character in these ranges has
the same normalized value
\[
   \frac{\chi(u)}{\chi(1)}
      =\frac{(-1)^s}{\prod_{i=n-s}^{n-1}(q^i-1)}.
\]
We have
\(\sum_{\chi\in\Irr(G)}|\chi(a)|^2=|C_G(a)|=q^n-1\).  Therefore,
after increasing \(N_0\) if necessary,
\begin{equation}
   \begin{aligned}
   \left|\sum_{\substack{e\mid n\\e>s}}E_e(u)\right|
      &\leq
      \frac{q^n-1}{(q-1)\prod_{i=n-s}^{n-1}(q^i-1)}\\
      &\leq4 \cdot 
         q^{-(s-1)n+s(s+1)/2-1}
       <\frac15,
   \end{aligned}
   \label{eq:large-cuspidal-error}
\end{equation}
for large enough $n$, since \(s\geq2\).  

Together, the three ranges give \(F_a(u)>1-\frac15-\frac15=\frac35>0\) once
\(n\geq N_0\).  The Frobenius count is therefore positive, as required.
\end{proof}

\begin{proof}[Proof of Theorem~\ref{thm:primary-cyclic-difference}]
Let \(B\) be supplied by \cref{prop:constant-support-reduction}.
For each integer \(s\) with \(2\leq s<B\),
\cref{prop:general-q-irreducible-difference} supplies a threshold depending
only on \(s\).  Choose \(D\geq2B\) at least as large as all these finitely
many thresholds, so that the proposition applies throughout this range
whenever \(d\geq D\).  Choose \(N\) at least
\(8\), \(2B\), \(3(B-1)\), and \((B-1)D\).

We first prove \eqref{eq:primary-difference-cover}.  Let \(x\in\SL_n(q)\)
represent a target in \(\PSL_n(q)\).  If \(\supp(x)\geq B\), then
\cref{prop:constant-support-reduction} gives \(x\in S_r^{-1}S_r\).  We may
therefore assume that \(\supp(x)<B\).  If \(x\) is nonprimary, the theorem of
B\"unger and Nielsen \cite[Theorem~4.2]{BungerNielsen1999} gives
\(x\in S_r^{-1}S_r\).  For a short proof, see
\cite[Lemma~1.6(1) and \S4]{Nielsen2025}.  We may therefore assume that \(x\) is
primary of degree \(e\).  If \(e\geq2\), every eigenspace of \(x\) over
\(\overline{\F}_q\) has dimension at most \(n/e\), and hence
\[
   \supp(x)\geq n-\frac ne\geq\frac n2\geq B,
\]
contrary to our assumption.  Thus \(e=1\), so \(x=zu\) for some
\(z\in\F_q^\times\) and some unipotent \(u\). \(x\) and \(u\) have the
same image in \(\PSL_n(q)\), and it suffices to cover \(u\).

For a unipotent matrix, \(\supp(u)=\rank(u-I)\).  The case of rank zero is the
identity, while rank one is excluded from the first assertion.  It remains to
consider
\[
   2\leq s:=\rank(u-I)<B.
\]
If \(d=1\), then \(p=T-\alpha\) for some \(\alpha\in\F_q^\times\), and
\(S_r\) is a scalar multiple of the conjugacy class of \(J_n(1)\).  The scalar
cancels in \(S_r^{-1}S_r\), while
\(r=n\geq3(B-1)\geq3s\). Hence we can use
\cref{lem:regular-unipotent-difference} in this case.

Assume now that \(d\geq2\).  If \(r\geq s+1\), then
\cref{lem:general-q-small-unipotents}, with \(t=r\), gives
\(u\in S_r^{-1}S_r\).  It remains to treat \(r\leq s\).  Since
\(r\leq s\leq B-1\),
\[
   d=\frac nr\geq\frac{N}{B-1}\geq D.
\]
The direct sum of the nontrivial Jordan blocks of \(u\) has dimension at most
\(2s\): its dimension is \(s\) plus the number of these blocks, and their
number is at most \(s\).  Since \(d\geq D\geq2B>2s\), adjoining identity
blocks gives a unipotent matrix \(u_0\in\GL_d(q)\) such that
\[
   u\sim u_0\oplus I_{d(r-1)},
   \qquad \rank(u_0-I)=s.
\]
By \cref{prop:general-q-irreducible-difference}, applied in dimension \(d\),
\(u_0\in S_1^{-1}S_1\).  If \(r=1\), this is the desired conclusion.  If
\(r\geq2\), applying \cref{prop:add-primary-layer} \(r-1\) times with
\(Z=0\) gives
\[
   u_0\oplus I_{d(r-1)}\in S_r^{-1}S_r,
\]
and the displayed conjugacy therefore gives \(u\in S_r^{-1}S_r\).

We have now covered every target whose support is different from one.
Projecting these factorizations to \(\PGL_n(q)\) proves
\eqref{eq:primary-difference-cover}.

For \eqref{eq:primary-difference-all}, assume \(r\geq2\). Only targets of
support one remain. The nonprimary case was handled above, so we may assume
that the target is primary. It must have degree one, since one of degree at
least two has support at least \(n/2\). It therefore has a unipotent
representative \(u\) with
\(\rank(u-I)=1\).  If \(d=1\), then
\(r=n\geq N\geq3\), and \cref{lem:regular-unipotent-difference} applies.  If
\(d\geq2\), then \cref{lem:general-q-small-unipotents} applies with \(s=1\)
and \(t=r\).  Thus every target is covered when \(r\geq2\). 
\end{proof}

\section{Arbitrary large binary classes}
\label{sec:binary-large-classes}

In this section we prove our main theorem \cref{thm:main-binary}, starting with
targets whose support is a fixed positive fraction of \(n\).

\begin{proposition}
\label{prop:binary-fixed-fraction-mixing}
For every \(c>0\), there are \(\epsilon>0\) and \(N \in \mathbb{N}\) such that, whenever
\(n>N\), the following holds.  Let \(S\subseteq\GL_n(2)\) be a conjugacy
class, and let \(x\in\GL_n(2)\).  If
\[
   |S|\geq|\GL_n(2)|^{1-\epsilon},
   \qquad
   \supp(x)\geq cn,
\]
then
\[
   x\in SS^{-1}.
\]
\end{proposition}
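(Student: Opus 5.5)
The plan is to apply the Frobenius criterion with $a\in S$:
\[
   \sum_{\chi\in\Irr(\GL_n(2))}\frac{|\chi(a)|^2\chi(x^{-1})}{\chi(1)}.
\]
Over $\F_2$ there is a single linear character, the trivial one, and it contributes $1$. It therefore suffices to show that the nontrivial characters contribute less than $1$ in absolute value. The intended input is the Guralnick--Larsen--Tiep bound \cite[Theorem~1.3]{GLTClassical}. In the form I expect to use, it gives, for $x$ of support at least $cn$ and $n$ large, $|\chi(x)|\le\chi(1)^{1-\gamma}$ for some $\gamma=\gamma(c)>0$. Since $\supp(x)/n\ge c$ is a fixed fraction, \cref{thm:larsen-tiep-character-ratio} already gives such a bound with $\gamma=\sigma c$, uniformly in $\chi$. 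So I would simply use \eqref{eq:LT-ratio-GL}:
\[
   \frac{|\chi(x)|}{\chi(1)}\le\chi(1)^{-\sigma c}.
\]

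Next I control $|\chi(a)|^2$ by the centralizer bound \eqref{eq:character-centralizer-bound}. By hypothesis,
\[
   |C(a)|=\frac{|\GL_n(2)|}{|S|}\le|\GL_n(2)|^{\epsilon}\le 2^{\epsilon n^2}.
\]
The error term is then at most $2^{\epsilon n^2}\sum_{\chi\ne1}\chi(1)^{-\sigma c}$. For this to be small I need a Witten zeta function estimate: $\sum_{\chi\ne 1}\chi(1)^{-t}\to0$ for fixed $t>0$ as $n\to\infty$ (Liebeck--Shalev). That alone does not beat $2^{\epsilon n^2}$, so I split by level $j$, as in \cref{prop:constant-support-reduction}.

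\emph{High levels $j\ge n/2$.} \Cref{thm:level-dimension-bounds} gives $\chi(1)>2^{n^2/4-2}$. There are at most $2^n$ such characters, so their contribution is at most
\[
   2^{\epsilon n^2+n-\sigma c(n^2/4-2)},
\]
which is tiny once $\epsilon<\sigma c/8$.

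\emph{Low levels $1\le j<n/2$.} Here $\chi(1)\ge2^{j(n-j)}\ge2^{nj/2}$, so the ratio is at most $2^{-\sigma cnj/2}$, and there are at most $2^{j+1}$ characters of level $j$. The difficulty is $|\chi(a)|^2$, where the crude bound $2^{\epsilon n^2}$ is too weak when $j$ is small. I need a level-sensitive bound, namely
\[
   |\chi(a)|^2\le 2^{O(\epsilon nj)+O(j^2)}.
\]
To get it, I would twist to true level $j$ and apply the Pieri inversion \cref{lem:stable-pieri-inversion}. This writes $\chi$ as a signed sum of at most $2^j$ characters $\Ind_{P_{k,n-k}}(\theta\boxtimes\1)$ with $k\le j$. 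Each of these, evaluated at $a$, is a sum of $\theta(a|_W)$ over $a$-invariant $k$-subspaces $W$, and $|\theta(a|_W)|\le 2^{k^2/2}$. So $|\chi(a)|\le 2^{j}2^{j^2/2}\,\#\{a\text{-invariant subspaces of dim}\le j\}$.

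It remains to bound the number of $a$-invariant $k$-dimensional subspaces by something like $2^{O(\epsilon n k)+O(k^2)}$, using $|C(a)|\le2^{\epsilon n^2}$. An invariant subspace is determined by at most $k$ generating vectors, each drawn from a cyclic submodule of dimension $\le k$. Vectors whose $a$-annihilator has degree $\le k$ lie in $\bigoplus_{\deg f\le k}\ker f(a)$. The dimension of $\ker f(a)$ is controlled by the multiplicities of elementary divisors of small degree, and large such multiplicities force a large centralizer via \eqref{eq:general-centralizer-proportion} (dimension of centralizer $\ge$ sum of squares of multiplicities). This would give $\dim\ker f(a)\le O(\sqrt{\epsilon}\,n)\cdot\deg f$. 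The count of $k$-subspaces is then at most $2^{O(\sqrt\epsilon\, nk)}$, the low-level sum becomes
\[
   \sum_j 2^{j+1+2j+j^2+O(\sqrt\epsilon nj)-\sigma cnj/2},
\]
and this is less than $1/2$ for $\epsilon$ small relative to $(\sigma c)^2$ and $n$ large, using $j^2\le nj/2$ only when $j$ is small relative to $\sigma c n$. For $j$ between $\delta n$ and $n/2$, I would instead use the high-level style crude bound $2^{\epsilon n^2}$ against $\chi(1)\ge2^{\delta n^2/2}$.

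The main obstacle is the invariant-subspace count, that is, the low-level character bound for an arbitrary large class $a$. If it proves awkward, the fallback is to quote \cite[Theorem~1.3]{GLTClassical} directly as an exponential character bound $|\chi(a)|\le\chi(1)^{O(\log_2|C(a)|/n^2)^{1/2}}$-type. Combined with the ratio bound for $x$ and the Witten zeta estimate, that would finish at once.
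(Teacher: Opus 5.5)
Your fallback is the paper's proof, and the paper applies \cite[Theorem~1.3]{GLTClassical} to the source $a$, not to $x$. With exponent $\sigma c/4$ it gives $\epsilon$ such that $|C(a)|\le|\GL_n(2)|^{\epsilon}$ implies $|\chi(a)|\le\chi(1)^{\sigma c/4}$ for every $\chi$. Combined with \eqref{eq:LT-ratio-GL}, each nontrivial term is at most $\chi(1)^{-\sigma c/2}$, and the total is $\zeta_{\GL_n(2)}(\sigma c/2)-1\to0$ by Liebeck--Shalev. The paper does no level splitting.

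Your main route is genuinely different: it avoids GLT~1.3 by a level split. The high range $j\ge n/2$ and the middle range $\delta n\le j<n/2$ (crude bound $2^{\epsilon n^2}$ against $\chi(1)\ge2^{\delta n^2/2}$) are fine. The invariant-subspace count, however, does not give what you claim. You take at most $k$ generators, each from $\bigcup_{\deg f\le k}\ker f(a)$, a set of size $2^{O(\sqrt\epsilon nk)}$. That yields $2^{O(\sqrt\epsilon nk^2)}$, not $2^{O(\sqrt\epsilon nk)}$.

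With that weaker count, the level-$j$ exponent $O(\sqrt\epsilon nj^2)-\sigma cnj/2$ is negative only for $j=O(\sigma c/\sqrt\epsilon)$. The crude bound only works for $j\gtrsim\epsilon n/(\sigma c)$, so the levels in between are uncovered. Also, your $\bigoplus_{\deg f\le k}\ker f(a)$ must be read as a union. The sum of these kernels can be all of $\F_2^n$ even when $C(a)$ is tiny, for example for regular semisimple $a$ with many distinct small-degree factors.

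The fix is to use the structure theorem. $W$ is a torsion $\F_2[T]$-module, so $W=\bigoplus_i\F_2[a]w_i$, and the annihilators $f_i$ of the $w_i$ satisfy $\sum_i\deg f_i=k$. Next, use $\dim C_{\operatorname{Mat}_n(2)}(a)\ge\deg p\cdot B_p^2$, where $B_p$ is the total number of $p$-primary blocks. Your ``sum of squares of multiplicities'' $\sum_r b_{p,r}^2$ does not control $B_p$ at the $\sqrt\epsilon$ scale. This gives $\dim\ker f(a)\le\deg f\cdot\sqrt{\epsilon n^2+n}$, hence at most $2^{2k+k\sqrt{\epsilon n^2+n}}$ invariant $k$-subspaces. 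With that, the low-level sum is geometric with ratio $2^{-\Omega(n)}$ for $j\le\delta n$, taking $\delta\approx\sigma c/8$ and $\epsilon\ll(\sigma c)^2$, and your argument closes.

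The trade-off is this. The paper's proof is three lines but rests on GLT~1.3 and Liebeck--Shalev. Your repaired route needs neither: it uses only \cref{thm:larsen-tiep-character-ratio}, the level--dimension bounds, \cref{lem:stable-pieri-inversion} with the invariant-subspace evaluation for general $a$, and an elementary module count. In return it gives an explicit $\epsilon\asymp(\sigma c)^2$.
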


\begin{proof}
Let \(\sigma\) be as in \cref{thm:larsen-tiep-character-ratio}.  Applying
\cite[Theorem~1.3]{GLTClassical} with exponent \(\sigma c/4\), choose
\(\epsilon>0\), depending only on \(c\), so that
\[
   |C_{\GL_n(2)}(a)|\leq|\GL_n(2)|^\epsilon
   \quad\Longrightarrow\quad
   |\chi(a)|\leq\chi(1)^{\sigma c/4}
\]
for every \(a\in\GL_n(2)\) and \(\chi\in\Irr(\GL_n(2))\).  Choose
\(a\in S\). By the size assumption, the displayed bound applies to \(a\).
\cref{thm:larsen-tiep-character-ratio} gives, for every nonlinear \(\chi\),
\[
   \frac{|\chi(x)|}{\chi(1)}\leq\chi(1)^{-\sigma c}.
\]

The trivial character contributes one to the Frobenius sum.  By the two
character bounds above, the remaining contribution has absolute value at most
\[
   \sum_{\chi(1)>1}|\chi(a)|^2
      \frac{|\chi(x)|}{\chi(1)}
   \leq\sum_{\chi(1)>1}\chi(1)^{-\sigma c/2}.
\]
For a finite group \(G\), its \emph{Witten zeta function} is
\[
   \zeta_G(t):=\sum_{\chi\in\Irr(G)}\chi(1)^{-t},\qquad t>0.
\]
For \(n\geq3\), the sum over nonlinear characters above equals
\(\zeta_{\GL_n(2)}(\sigma c/2)-1\), which tends to zero as \(n\to\infty\) by
Liebeck and Shalev \cite[Theorem~1.2]{LiebeckShalev2005}. Choose large enough
\(N\geq5\), so the sum cannot cancel the trivial term, and the Frobenius
criterion proves the claim.
\end{proof}

We use the following defintion and lemma to estimate the size of a conjugacy
class:

\begin{definition}
For a direct sum \(D=\bigoplus_iD_i\in\GL_m(2)\) of elementary-divisor
blocks, define
\[
   E(D):=\dim_{\F_2}C_{\operatorname{Mat}_m(2)}(D)-m.
\]
For example, if \(\deg p=d\), the matrix-centralizer algebra of
\(C(p)^{\oplus b}\) has dimension \(db^2\).  Deleting one copy changes this
to \(d(b-1)^2\), a decrease of \(d(2b-1)\): the diagonal block contributes
\(d\), and its off-diagonal row and column contribute \(2d(b-1)\).

For general \(D\), the possible diagonal blocks corresponding to \(D_i\)
form its matrix centralizer \(\F_2[D_i]\), a space of dimension \(\dim D_i\).
Thus the diagonal blocks contribute \(m\), and \(E(D)\) is exactly the total
dimension of the off-diagonal block spaces.
\end{definition}

\begin{lemma}\label{lem:binary-centralizer-size}
For every such \(D\),
\[
   2^{E(D)}\leq |C_{\GL_m(2)}(D)|\leq2^{E(D)+m}.
\]
\end{lemma}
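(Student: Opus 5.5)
The plan is to use formula \eqref{eq:general-centralizer-proportion} with \(q=2\), together with the identification \(\dim_{\F_2}C_{\operatorname{Mat}_m(2)}(D)=E(D)+m\), which is just the definition of \(E(D)\). The centralizer \(C_{\GL_m(2)}(D)\) is the unit group of the algebra \(C_{\operatorname{Mat}_m(2)}(D)\), so
\[
   |C_{\GL_m(2)}(D)|=2^{E(D)+m}\prod_{p,r}\prod_{j=1}^{b_{p,r}}\bigl(1-2^{-j\deg p}\bigr).
\]
The upper bound follows immediately, since every factor in the product is at most \(1\).

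For the lower bound, it suffices to show that the product is at least \(2^{-m}\). Each factor satisfies \(1-2^{-j\deg p}\geq 1/2\), because \(j\deg p\geq1\). So the product is at least \(2^{-T}\), where \(T=\sum_{p,r}b_{p,r}\) is the total number of elementary-divisor blocks. Each such block has dimension \(r\deg p\geq1\), so \(T\leq m\). This gives
\[
   |C_{\GL_m(2)}(D)|\geq 2^{E(D)+m}\cdot2^{-m}=2^{E(D)}.
\]

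If one prefers not to rely on the cited formula, the lower bound can be checked directly. The algebra \(C_{\operatorname{Mat}_m(2)}(D)\) splits as a product over the primary components \(p\). Modulo its Jacobson radical, each component becomes \(\prod_r \operatorname{Mat}_{b_{p,r}}(\F_{2^{\deg p}})\). An element is a unit exactly when its image is a unit, so
\[
   |C_{\GL_m(2)}(D)|=2^{E(D)+m}\prod_{p,r}\frac{|\GL_{b}(2^{\deg p})|}{2^{\deg p\,b^2}},
   \qquad b=b_{p,r},
\]
which recovers the same product. There is no real obstacle. The only step needing care is the count \(T\leq m\), which rests on each block contributing at least one dimension. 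The crude bound \(1-2^{-k}\geq1/2\) is valid precisely because \(q=2\); this is the step that would need adjusting over other fields.
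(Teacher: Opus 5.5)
Your proof is correct and is essentially the paper's argument: apply \eqref{eq:general-centralizer-proportion} with \(q=2\), bound each of the \(\sum_{p,r}b_{p,r}\leq m\) factors below by \(1/2\), and get the upper bound from \(C_{\GL_m(2)}(D)\subseteq C_{\operatorname{Mat}_m(2)}(D)\). Your closing remark is overly cautious, though: \(1-q^{-k}\geq1/2\) holds for every \(q\geq2\), so that step would not need adjusting over other fields.
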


\begin{proof}
Apply \eqref{eq:general-centralizer-proportion} with \(q=2\).
Each factor is at least \(1/2\), and \(\sum_{p,r}b_{p,r}\leq m\), so this
proportion is at least \(2^{-m}\).  Since
\(|C_{\operatorname{Mat}_m(2)}(D)|=2^{E(D)+m}\), this gives the lower
bound.  The upper bound follows from
\(C_{\GL_m(2)}(D)\subseteq C_{\operatorname{Mat}_m(2)}(D)\).
\end{proof}

We now prove \cref{thm:main-binary}. We begin by expressing an element of \(S\)
as a direct sum of rational-canonical blocks and cancelling identical blocks in
the two factors.  Cancelling a block \(D\), means using \(D\) in both factors,
so that its contribution to the difference product is \(DD^{-1}=I\). If a single
source block is large enough to contain the nonidentity part of the target, we
apply \cref{thm:primary-cyclic-difference} to that block and cancel the
remaining parts. Otherwise, we cancel blocks until the target's support reaches
at least one-quarter of the remaining dimension, at which point
\cref{prop:binary-fixed-fraction-mixing} applies.

\begin{proof}[Proof of \cref{thm:main-binary}]
Let \(N\) be the dimension threshold in
\cref{thm:primary-cyclic-difference}.  Apply
\cref{prop:binary-fixed-fraction-mixing} with \(c=1/4\), and denote its
constants by \(\epsilon_1\) and \(N_1\).  Choose an integer \(B\) such that
\(2B\geq N\) and every \(m>2B\) satisfies
\[
   m>N_1,
   \qquad
   2^{m+(\epsilon_1/2)m^2}\leq|\GL_m(2)|^{\epsilon_1}.
\]
There are only finitely many invertible elementary-divisor blocks over
\(\F_2\) of dimension less than \(2B\).  Let \(L\) be their number.  Choose
\[
   0<\epsilon<\min\left\{\frac{\epsilon_1}{2},\frac1{8B^2L}\right\}
\]
and an integer \(N_0>\max\{N_1,8B^2L\}\).

Fix \(n\geq N_0\), and let \(x\in\GL_n(2)\) have
\(s=\supp(x)\neq1\).  The identity is automatic, so assume \(s\geq2\).
Write
\[
   S=a^{\GL_n(2)},
   \qquad
   a=\bigoplus_i C(p_i^{r_i}).
\]
The class-size hypothesis and \cref{lem:binary-centralizer-size} give
\[
   2^{E(a)}
      \leq |C_{\GL_n(2)}(a)|
      =\frac{|\GL_n(2)|}{|S|}
      \leq|\GL_n(2)|^\epsilon
      <2^{\epsilon n^2}.
\]
Thus \(E(a)/n^2\leq\epsilon\).
If \(s\geq n/4\), \cref{prop:binary-fixed-fraction-mixing} applied in
dimension \(n\) proves the result.

Assume that \(s<n/4\).  Then \(s=\rank(x-I)\).  After conjugation, write
\[
   x=x_0\oplus I,
   \qquad
   \dim x_0\leq2s.
\]

If some elementary-divisor block of $a$ has dimension at least
\(2\max\{s,B\}\), then \(x_0\) fits on that block. The padded target still has
support \(s\), so \cref{thm:primary-cyclic-difference} covers it.  Taking the
remaining blocks identically in the two elements of \(S\) makes them cancel
in \(SS^{-1}\), giving the identity on the complementary summand and
completing the factorization of \(x\).

Suppose now that every block has dimension less than
\(2\max\{s,B\}\).  We first show that \(s\geq B\).  Otherwise all blocks
have dimension less than \(2B\).  If \(b_1,\ldots,b_L\) are the
multiplicities of the possible block types and \(b=\sum b_i\), then
\(b>n/(2B)\).  Maps between equal blocks give
\[
   E(a)\geq\sum_{i=1}^L b_i(b_i-1)
      \geq\frac{b^2}{L}-b
      >\frac{n^2}{4B^2L}-n
      >\frac{n^2}{8B^2L}
      >\epsilon n^2,
\]
contrary to \(E(a)\leq\epsilon n^2\).  Hence \(s\geq B\), and every
block has dimension less than \(2s\).

We now repeatedly cancel source blocks against the identity part of the
target.  At a stage with \(a=\bigoplus_i a_i\in\GL_m(2)\), \(m>4s\), put
\(m_i=\dim a_i\).  Since \(m-\dim x_0>2s>m_i\), we may take \(a_i\)
identically in the two factors, producing \(I_{m_i}\).  Thus the target's
support remains \(s\) while its ambient dimension falls to \(m-m_i\), so
its relative support increases from \(s/m\) to \(s/(m-m_i)\).  We choose
\(a_i\) so that \(E(a)/m^2\) does not increase.

Let \(\Delta_i\) be the decrease in \(E(a)\) caused by removing \(a_i\).
Every off-diagonal block space has two distinct indices, so it contributes
to exactly two of the numbers \(\Delta_i\).  Hence
\[
   \sum_i\Delta_i=2E(a),
   \qquad
   \frac{2E(a)}{m}
      =\sum_i\frac{m_i}{m}\frac{\Delta_i}{m_i}.
\]
The right-hand side is a weighted average of the numbers
\(\Delta_i/m_i\).  Thus some \(i\) satisfies
\(\Delta_i\geq2m_iE(a)/m\).  If \(a'\) is obtained by removing this block,
then
\[
   E(a')=E(a)-\Delta_i
      \leq E(a)\left(1-\frac{2m_i}{m}\right)
      \leq E(a)\left(1-\frac{m_i}{m}\right)^2,
\]
and, since \(\dim a'=m-m_i\),
\[
   \frac{E(a')}{(\dim a')^2}\leq\frac{E(a)}{m^2}.
\]

Relabel \(a'\) as \(a\) and repeat whenever the current dimension exceeds
\(4s\).  Each step preserves the normalized bound above.  When the process
stops, the remaining summand \(a_0\in\GL_m(2)\) has dimension
\(m\leq4s\).  The preceding dimension was greater than \(4s\), while the
last deleted block had dimension less than \(2s\), so \(m>2s\).  Therefore
\begin{equation}
   2s<m\leq4s,
   \qquad
   \frac{E(a_0)}{m^2}\leq\epsilon.
   \label{eq:binary-peeled-source}
\end{equation}
In particular, the padded target \(x_0\oplus I_{m-\dim x_0}\) is defined
and has support \(s\geq m/4\).

Since \(m>2s\geq2B\),
\Cref{lem:binary-centralizer-size} and \eqref{eq:binary-peeled-source} give
\[
   |C_{\GL_m(2)}(a_0)|
      \leq2^{m+\epsilon m^2}
      \leq2^{m+(\epsilon_1/2)m^2}
      \leq|\GL_m(2)|^{\epsilon_1}.
\]
Thus \cref{prop:binary-fixed-fraction-mixing} applies in dimension \(m\) to
the class of \(a_0\) and the padded target.  Cancelling the deleted blocks
gives \(x\in SS^{-1}\), proving \cref{thm:main-binary}.
\end{proof}

The next two
sections show, respectively, why the theorem uses \(SS^{-1}\) rather than
\(S^2\), and why its conclusion for arbitrary source classes does not
extend to all finite fields. The latter failure is demonstrated in the
simple groups \(\PSL_n(8)\).

\section{Repeated-block failure of square covering over
\texorpdfstring{$\F_2$}{F2}}
\label{sec:f2-square-obstruction}

To build the counterexample we first need the following lemma:

\begin{lemma}
\label{lem:polynomial-rank}
Let \(U,V\in\operatorname{Mat}_n(F)\), where \(F\) is a field, and let
\(f\in F[T]\) have degree \(e\).  Then
\[
   \rank\bigl(f(U)-f(V)\bigr)
   \leq e\,\rank(U-V).
\]
\end{lemma}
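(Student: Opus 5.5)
By linearity it suffices to treat monomials, and I would reduce further to a telescoping identity for powers. For every integer \(k\geq1\),
\[
   U^k-V^k=\sum_{i=0}^{k-1}U^{i}(U-V)V^{k-1-i}.
\]
This is checked by expanding the right-hand side: consecutive terms \(U^{i+1}V^{k-1-i}\) and \(U^{i}V^{k-i}\) cancel, leaving only \(U^k-V^k\).

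Each summand \(U^{i}(U-V)V^{k-1-i}\) has rank at most \(\rank(U-V)\), since multiplying on either side cannot increase rank. Subadditivity of rank then gives
\[
   \rank(U^k-V^k)\leq k\,\rank(U-V).
\]

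Now write \(f=\sum_{k=0}^{e}c_kT^k\). The constant term cancels in \(f(U)-f(V)\), so
\[
   f(U)-f(V)=\sum_{k=1}^{e}c_k\bigl(U^k-V^k\bigr).
\]

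A bound of \(\sum_{k\le e}k\,\rank(U-V)\) would be too weak, so I would not add the per-power bounds directly. Instead I would work with images. From the telescoping identity, \(U^k-V^k=\sum_{i} U^{i}(U-V)V^{k-1-i}\), so its image lies in
\[
   W:=\sum_{i=0}^{e-1}U^{i}\,\operatorname{im}(U-V)
\]
for every \(k\leq e\). Hence the image of \(f(U)-f(V)\) is contained in \(W\). Each space \(U^{i}\operatorname{im}(U-V)\) has dimension at most \(\rank(U-V)\), and there are \(e\) of them, so
\[
   \dim W\leq e\,\rank(U-V).
\]
This gives \(\rank\bigl(f(U)-f(V)\bigr)\leq e\,\rank(U-V)\) as claimed.

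The only delicate step is this last one: getting the constant \(e\) rather than \(e(e+1)/2\). Bounding the total image by one space \(W\) shared across all powers handles it; the rest of the argument is routine.
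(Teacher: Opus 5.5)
Your proof is correct and is essentially the paper's argument: both telescope $U^k-V^k$ and regroup the terms into only $e$ pieces of rank at most $\rank(U-V)$. The paper groups by the power of $V$ on the right, writing $f(U)-f(V)=\sum_{j=0}^{e-1}\bigl(\sum_{i=j+1}^{e}a_iU^{i-1-j}\bigr)(U-V)V^j$, while you group by the power of $U$ on the left via the containment $\operatorname{im}\bigl(f(U)-f(V)\bigr)\subseteq\sum_{i=0}^{e-1}U^{i}\operatorname{im}(U-V)$; your opening remark that linearity reduces everything to monomials is not what you actually use (as you note, it would only give $e(e+1)/2$) and can be deleted.
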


\begin{proof}
   Write \(f(T)=\sum_{i=0}^e a_iT^i\).
For \(i\geq1\), the telescoping identity
\[
   U^i-V^i
   =\sum_{j=0}^{i-1}U^{i-1-j}(U-V)V^j
\]
gives, after grouping terms with the same \(j\),
\[
   f(U)-f(V)
   =\sum_{j=0}^{e-1}
     \left(\sum_{i=j+1}^{e}a_iU^{i-1-j}\right)(U-V)V^j.
\]
Each of these \(e\) summands has rank at most \(\rank(U-V)\), so
subadditivity of rank proves the claim.
\end{proof}

\Cref{thm:intro-f2-obstruction} follows from the following theorem and the
class-size computation after its proof.

\begin{theorem}
\label{thm:f2-repeated-block-obstruction}
Let \(p\in\F_2[T]\) be monic irreducible of degree \(d\geq3\), and assume that
\(p\neq p^\vee\).  Let \(m\geq1\), put \(n=dm\), and set
\(S:=\bigl(C(p)^{\oplus m}\bigr)^{\GL_n(2)}\).  Then \(S\) satisfies
\[
   S^2\cap
   \{x\in\GL_n(2):\supp(x)<m\}
   =\varnothing.
\]
\end{theorem}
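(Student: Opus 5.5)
Suppose \(x=gh\) with \(g,h\in S\); we will show \(\supp(x)\geq m\).
Since \(p(g)=p(h)=0\), we have \(h=g^{-1}x\). Hence \(p(g^{-1}x)=0\), while \(p^\vee(g^{-1})=0\). Over \(\F_2\), where \(p(0)=1\), the reciprocal satisfies \(p^\vee(T)=T^dp(T^{-1})\). The idea is to compare \(g^{-1}x\) with \(g^{-1}\) via \cref{lem:polynomial-rank}. Note \(\supp(x)=\min_\lambda\rank(x-\lambda I)\), and \(x\in\GL_n(2)\) has determinant \(1\). We need care because the minimizing \(\lambda\) may lie in \(\overline{\F}_2\) rather than \(\F_2\). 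If \(\lambda\notin\F_2\), the \(\lambda\)-eigenspace and its Galois conjugates have equal dimension and are independent, so \(\supp(x)\geq n/2\geq m\) (since \(d\geq3\)). So we may assume \(\lambda=1\), the only element of \(\F_2^\times\), and then \(\supp(x)=\rank(x-I)\). Set \(U=g^{-1}x\) and \(V=g^{-1}\); then
\[
   \rank(U-V)=\rank\bigl(g^{-1}(x-I)\bigr)=\supp(x).
\]

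Apply \cref{lem:polynomial-rank} with \(f=p\), of degree \(d\). It gives
\[
   \rank\bigl(p(U)-p(V)\bigr)\leq d\,\supp(x).
\]
Since \(p(U)=p(h)=0\), this says \(\rank p(g^{-1})\leq d\,\supp(x)\). It remains to compute \(\rank p(g^{-1})\). The element \(g^{-1}\) is conjugate to \(C(p^\vee)^{\oplus m}\), which is semisimple with characteristic polynomial \((p^\vee)^m\). Because \(p\neq p^\vee\) and both are irreducible, \(p\) and \(p^\vee\) are coprime. Hence \(p(g^{-1})\) is invertible, and \(\rank p(g^{-1})=n=dm\). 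Therefore \(dm\leq d\,\supp(x)\), i.e.\ \(\supp(x)\geq m\). This contradicts \(\supp(x)<m\), so no such \(x\) lies in \(S^2\).

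The main obstacle is the reduction to \(\lambda=1\): the support is defined via eigenvalues in \(\overline{\F}_2\), while \cref{lem:polynomial-rank} is applied over \(\F_2\). The Galois-orbit argument above handles this, using \(n/2\geq m\), which holds because \(d\geq2\). As an alternative, one can run the same rank comparison over \(\overline{\F}_2\) with \(U=g^{-1}x\) and \(V=\lambda g^{-1}\). In that case \(p(\lambda g^{-1})\) is invertible unless \(\lambda^{-1}\beta^{-1}\) is a root of \(p\) for some root \(\beta\) of \(p\). Handling that case would require more work, so the Galois argument is the preferred route.
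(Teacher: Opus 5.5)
Your proof is correct and is essentially the paper's argument: the paper likewise applies \cref{lem:polynomial-rank} to one factor of the product and the inverse of the other. It compares \(X=xY^{-1}\) with \(Z=Y^{-1}\), which is the mirror image of your \(h=g^{-1}x\) versus \(g^{-1}\), uses coprimality of \(p\) and \(p^\vee\) to get \(\rank p(Z)=n\), and uses the same Frobenius-conjugate eigenspace argument to reduce \(\supp(x)\) to \(\rank(x-I)\). The only difference is the ordering: you do the Galois reduction first, while the paper does it after deriving \(\rank(x-I)\geq m\).
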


\begin{proof}
Inversion sends \(S\) to the class of \(C(p^\vee)^{\oplus m}\).  Since
\(p\neq p^\vee\), the two classes have different characteristic polynomials.
Thus \(S\neq S^{-1}\).

Write \(x=XY\) with \(X,Y\in S\), and put \(Z=Y^{-1}\).  Then
\(p(X)=p^\vee(Z)=0\).  Since \(p\) and \(p^\vee\) are coprime, choose
\(a,b\in\F_2[T]\) with \(ap+bp^\vee=1\).  Evaluating at \(Z\) gives
\(a(Z)p(Z)=I\), so \(p(Z)\) is invertible.  Moreover, \(X=xZ\) and
\(Z-X=(I-x)Z\), so
\cref{lem:polynomial-rank} gives
\[
   dm=n
   =\rank p(Z)
   =\rank\bigl(p(Z)-p(X)\bigr)
   \leq d\,\rank(Z-X)
   =d\,\rank(x-I).
\]
Thus \(\rank(x-I)\geq m\).  Since \(d\geq2\), we have \(m\leq n/2\).
Suppose that \(\supp(x)<m\).  A scalar attaining the projective support then
has an eigenspace of dimension greater than \(n/2\).  Its Frobenius conjugate
has the same multiplicity.  Hence the scalar is Frobenius-fixed, since two
distinct eigenspaces cannot both have dimension greater than \(n/2\).  Over
\(\F_2\), the only nonzero Frobenius-fixed scalar is \(1\).  Therefore
\(\supp(x)=\rank(x-I)<m\), contradicting the rank bound.
\end{proof}

The repeated-block centralizer formula
\eqref{eq:repeated-block-centralizer} gives, uniformly in \(d\) and \(m\),
\begin{align*}
   \log_2|S|&=n^2-dm^2+O(1),\\
   \frac{\log|S|}{\log|\GL_n(2)|}
      &=1-\frac1d+O(n^{-2}).
\end{align*}

\section{Proof of~\texorpdfstring{\cref{thm:intro-f8-obstruction}}{Theorem
\ref{thm:intro-f8-obstruction}}}
\label{sec:f8-obstruction}

We show that the covering conclusion of~\cref{thm:main-binary}
fails when moving to larger fields.

Let \(d\geq4\) be even and \(m\geq3\), with \(7\nmid dm\), and put
\(n=dm\). Fix \(\lambda\in\F_8^\times\setminus\{1\}\).
Since \(7\nmid n\), the center of \(\SL_n(8)\) is trivial, and we identify
\(G:=\SL_n(8)=\PSL_n(8)\). Moreover, every \(P\in\GL_n(8)\) can be
multiplied by a scalar to obtain determinant one, since the \(n\)-th power
map on \(\F_8^\times\) is bijective. Thus every \(\GL_n(8)\)-conjugacy
class contained in \(G\) remains a single \(G\)-conjugacy class.

Choose \(\alpha\in\F_{8^d}^\times\) of order \(8^{d/2}+1\), and let \(p\) be its
minimal polynomial over \(\F_8\). The degree of \(p\) is the least positive
\(k\) such that \(\alpha^{8^k}=\alpha\), so \(\deg p=d\). Since
\(\alpha^{-1}=\alpha^{8^{d/2}}\), we have \(p=p^\vee\), and
\(
   N_{\F_{8^d}/\F_8}(\alpha)
   =1.
\)
Consequently \(\det C(p)=p(0)=1\). Set
\[
   a:=C(p)^{\oplus m},\qquad S:=a^G=a^{\GL_n(8)}.
\]
\Cref{thm:intro-f8-obstruction} follows from the next rank statement and
the class-size computation after its proof.

\begin{theorem}
\label{thm:f8-scalar-obstruction}
Let \( a:=C(p)^{\oplus m}, S:=a^G=a^{\GL_n(8)}. \). Then \(S^2=SS^{-1}\), and
every \(x\in S^2\) satisfies for every \(\lambda\in\F_8^\times\setminus\{1\}\),
\[
   \rank(x-\lambda I)\geq m.
\]

So, for each integer \(s\) with \(2\leq s<m\), there is an element
\(x_s\in G\) with \(\supp(x_s)=s\) and \(x_s\notin S^2\).
\end{theorem}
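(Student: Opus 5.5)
First, \(S\) is real. Since \(p=p^\vee\), we have \(C(p)^{-1}\sim C(p^\vee)=C(p)\), so \(a^{-1}\sim a\) in \(\GL_n(8)\). The remarks above show that \(\GL_n(8)\)-classes inside \(G\) stay single \(G\)-classes, so \(S^{-1}=S\) and hence \(S^2=SS^{-1}\).

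For the rank bound I will adapt the proof of \cref{thm:f2-repeated-block-obstruction}. Write \(x=XY\) with \(X,Y\in S\), and put \(Z=Y^{-1}\in S\). Then \(p(X)=p(Z)=0\). Fix \(\lambda\neq1\) and compare \(X\) with \(\lambda Z\): we have \(X-\lambda Z=(x-\lambda I)Z\), so \(\rank(X-\lambda Z)=\rank(x-\lambda I)\). Define \(p_\lambda(T):=p(\lambda^{-1}T)\), so that \(p_\lambda(\lambda Z)=p(Z)=0\). \cref{lem:polynomial-rank} then gives
\[
   \rank p_\lambda(X)=\rank\bigl(p_\lambda(X)-p_\lambda(\lambda Z)\bigr)\leq d\,\rank(x-\lambda I).
\]
So it suffices to show that \(p_\lambda(X)\) is invertible, i.e.\ that \(p\) and \(p_\lambda\) are coprime. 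The roots of \(p_\lambda\) are \(\lambda\alpha^{8^i}\), and \(p\) is irreducible, so coprimality fails only if \(\lambda\alpha\) is a conjugate \(\alpha^{8^i}\).

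This coprimality is the main step. Suppose \(\lambda\alpha=\alpha^{8^i}\). Then \(\lambda=\alpha^{8^i-1}\) lies in \(\langle\alpha\rangle\), which has order \(8^{d/2}+1\). But \(\lambda\in\F_8^\times\) has order dividing \(7\). Since \(8\equiv1\pmod 7\), we get \(8^{d/2}+1\equiv2\pmod7\), so \(\gcd(7,8^{d/2}+1)=1\), forcing \(\lambda=1\), a contradiction. Hence \(p_\lambda(X)\) is invertible of rank \(n=dm\), and therefore \(\rank(x-\lambda I)\geq m\).

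The case \(\lambda=1\) needs no argument: \(1\in S^2\), so no such bound can hold there. The theorem only claims the bound for \(\lambda\ne1\).

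For the witnesses, I take \(x_s:=\lambda' I_{n-s}\oplus D\) with \(\lambda'\neq1\) in \(\F_8^\times\) and \(D\in\GL_s(8)\). I choose \(D\) with no eigenvalue \(\lambda'\) (for instance diagonal with entries different from \(\lambda'\)) and with determinant \(\lambda'^{-(n-s)}\), so that \(x_s\in\SL_n(8)\). A diagonal choice works since \(|\F_8^\times|=7\) leaves room to avoid \(\lambda'\) while fixing the product, using \(s\ge2\). Then \(\rank(x_s-\lambda' I)=s\). Because \(n-s>n/2\), the eigenspace for \(\lambda'\) is the largest eigenspace over \(\overline{\F}_8\), so \(\supp(x_s)=s\). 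Since \(s<m\), the rank bound shows \(x_s\notin S^2=SS^{-1}\).
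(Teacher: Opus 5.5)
Your proof is correct and is essentially the paper's argument: realness from $p=p^\vee$, then \cref{lem:polynomial-rank} applied to $p(\lambda^{-1}T)$ at $X$ and $\lambda Z$ (the paper applies $p$ itself at $\lambda Y$ and $X$, writing $x=XY^{-1}$), and the same diagonal witnesses $x_s$. The only difference is how you show $p(T)$ and $p(\lambda T)$ are coprime: the paper compares constant terms to get $\lambda^d=1$, contradicting $7\nmid d$, while you use that the order $8^{d/2}+1$ of $\alpha$ is prime to $7$; both arguments are valid.
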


\begin{proof}
Since \(p=p^\vee\) and the \(\GL_n(8)\)- and \(G\)-classes coincide,
we have \(S=S^{-1}\).

The irreducible polynomials \(p(T)\) and \(p(\lambda T)\) are coprime.
Otherwise the monic polynomials \(p(T)\) and
\(\lambda^{-d}p(\lambda T)\) would be equal, and comparing their
constant terms would give \(\lambda^d=1\), contrary to
\(7\nmid d\).

\(p(\lambda Y)\) is invertible when \(p(Y)=0\). Indeed, by B\'ezout's
identity, we may choose \(u,v\in\F_8[T]\) with
\(u(T)p(T)+v(T)p(\lambda T)=1\). Evaluating at \(Y\) with
\(p(Y)=0\) gives \(v(Y)p(\lambda Y)=I\).

Write \(x=XY^{-1}\), where \(X,Y\in S\). Then \(X=xY\) and
\(p(X)=p(Y)=0\). Applying \cref{lem:polynomial-rank} gives
\[
   n=\rank\bigl(p(\lambda Y)-p(X)\bigr)
   \leq d\,\rank(\lambda Y-X)
   =d\,\rank(x-\lambda I).
\]
Since \(n=dm\), this proves the rank bound.

For completeness, we construct \(x_s\) for each \(2\le s<m\). Choose
\(\mu\in\F_8^\times\setminus\{\lambda,\lambda^{-(n-s+1)}\}\), put
\(\nu:=\lambda^{-(n-s)}\mu^{-1}\), and define
\[
   x_s:=\operatorname{diag}(\lambda I_{n-s},\mu,\nu,I_{s-2}).
\]
 We have \(\mu,\nu\neq\lambda\) and \(\det x_s=1\).
Its \(\lambda\)-eigenspace has dimension \(n-s>n/2\), so
\(\supp(x_s)=\rank(x_s-\lambda I)=s<m\). The rank bound therefore
gives \(x_s\notin S^2\).
\end{proof}

Finally, \(C_{\GL_n(8)}(a)\cong\GL_m(8^d)\). Since the conjugacy
classes in \(G\) and \(\GL_n(8)\) coincide,
\[
   |S|=\frac{|\GL_n(8)|}{|\GL_m(8^d)|},\qquad
   \frac{\log|S|}{\log|G|}=1-\frac1d+O(n^{-2}),
\]
uniformly in \(d\) and \(m\), by the standard product formula
\(\log_8|\GL_k(8^e)|=ek^2+O(1)\). This completes the proof of
\cref{thm:intro-f8-obstruction}.

\section{Dense normal subsets and Pyber's conjecture}
\label{sec:dense-normal-subsets}

We conclude with a related obstruction for unions of conjugacy classes.
A subset of a group is \emph{normal} if it is invariant under conjugation.
Pyber's conjecture in Problem~20.74(a) of the Kourovka Notebook
\cite{Kourovka2026} can be stated as follows.

\begin{conjecture}[Pyber]
There are only finitely many nonabelian finite simple groups \(G\) that
admit an inverse-closed normal subset \(S\subseteq G\) such that
\[
  |S|>\frac{|G|}{\log_2|G|}
  \qquad\text{and}\qquad
  S^2\neq G.
\]
\end{conjecture}

The following explicit family disproves the conjecture.  We give
the construction over \(\F_5\), where the density estimate is particularly
simple.

\begin{theorem}
\label{thm:pyber-q5}
For every odd \(n\geq5\), identify
\[
  G_n:=\SL_n(5)=\PSL_n(5)
\]
and define
\[
  S:=
  \{A\in\SL_n(5):\dim\ker(A-I)\geq2
  \text{ and }A+I\text{ is invertible}\}.
\]
Then \(S\) is normal and inverse-closed,
\[
  \frac{|S|}{|G_n|}\geq\frac1{960},
  \qquad
  \operatorname{diag}(1,-I_{n-1})\notin S^2.
\]
\end{theorem}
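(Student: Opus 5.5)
The proof has three parts: the elementary properties of \(S\), the non-membership of \(x:=\operatorname{diag}(1,-I_{n-1})\) in \(S^2\), and the density estimate. Only the density estimate needs real work.

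\emph{Normality and inverse-closure.} Both defining conditions are invariant under conjugation in \(\GL_n(5)\), hence in \(G_n\), so \(S\) is normal. For inverses, \(\ker(A^{-1}-I)=\ker(A-I)\) and \(A^{-1}+I=A^{-1}(A+I)\). So \(A^{-1}\) satisfies both conditions, and \(\det A^{-1}=1\). Since \(n\) is odd, \(\gcd(n,4)=1\), so the center of \(\SL_n(5)\) is trivial; this justifies the identification \(G_n=\PSL_n(5)\).

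\emph{The uncovered element.} This step is a rank argument in the spirit of \cref{lem:polynomial-rank}. Suppose \(x=AB\) with \(A,B\in S\). Then \(B+I=A^{-1}x+I=A^{-1}(x+A)\), so invertibility of \(B+I\) forces \(A+x\) to be invertible. Write \(x=-I+E\), where \(E=x+I=2e_1e_1^{\mathsf T}\) has rank one. Then
\[
A+x=(A-I)+E .
\]
Since \(\dim\ker(A-I)\geq2\), we have \(\rank(A-I)\leq n-2\). Hence \(\rank(A+x)\leq n-1\), which is a contradiction. This part uses only the kernel condition on \(A\) and the \(-1\)-condition on \(B\); symmetry of \(S\) is what makes \(S\) an admissible counterexample to Pyber's conjecture.

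\emph{Density.} This is the main obstacle, because the bound must be uniform over all odd \(n\geq5\) and the determinant constraint must be handled. My plan is to use the Kung--Stong--Fulman cycle index of \(\GL_n(5)\). In it, the contributions of the primary parts at \(T-1\), at \(T+1\), and at all remaining irreducible polynomials factor as independent generating functions. The condition defining \(S\) involves only the first two factors: the \((T-1)\)-part must have at least two Jordan blocks, and the \((T+1)\)-part must be empty.

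Asymptotically the two required probabilities are as follows.
\begin{itemize}
\item Having no \(-1\) eigenvalue has probability \(\prod_{i\geq1}(1-5^{-i})\approx0.76\).
\item Having \(\dim\ker(A-I)\geq2\) has probability of order \(5^{-4}\) times an explicit constant, coming from the standard formula for the probability that the \(1\)-eigenspace has dimension exactly \(k\).
\end{itemize}

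To pass from \(\GL_n(5)\) to \(\SL_n(5)\), I would condition on the \((T\pm1)\)-parts. On the complementary part, multiplying by the scalar matrix \(c\) (equivalently, replacing each polynomial \(f(T)\) by \(c^{-\deg f}f(cT)\)) permutes the polynomials other than \(T\pm1\) only up to a controllable error. Alternatively, since \(n\) is odd, the power map behaves well on \(\F_5^\times\) for the relevant blocks, so the determinant of the complementary part should be close to equidistributed on \(\F_5^\times\). This would give density at least roughly \(\tfrac14\cdot\tfrac{1}{4}\cdot\) (the \(\GL\) probability), up to a bounded loss.

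A cleaner fallback, which I would try first if the equidistribution argument becomes messy, is a direct lower bound. Fix a \(2\)-dimensional subspace \(U\) and a complement, and count block-triangular matrices that are the identity on \(U\) with diagonal block \(D\in\SL_{n-2}(5)\) satisfying \(D+I\) invertible. Then divide by the number of \(2\)-dimensional subspaces that a given \(A\) fixes pointwise, which is bounded by a function of \(\dim\ker(A-I)\); the contribution of large kernel dimensions is geometrically small. The count of \(D\) reduces to the same problem in dimension \(n-2\) with the weaker condition that \(-1\) is not an eigenvalue. That density can be bounded below uniformly by a crude estimate, for instance via an inclusion--exclusion count of matrices with a \(-1\) eigenvector. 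Tracking the constants, \(5^{-4}\cdot\)(something \(\geq0.6\)) should comfortably exceed \(1/960\).
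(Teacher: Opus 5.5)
Your arguments for normality, inverse-closure and $\operatorname{diag}(1,-I_{n-1})\notin S^2$ are correct. The observation that $B+I=A^{-1}(A+x)$ cannot be invertible, because $A+x=(A-I)+(x+I)$ has rank at most $(n-2)+1$, is a neat variant of the paper's bound $\rank(g+I)=\rank(A+B)\geq\dim\ker(A-I)$.

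The gap is the density estimate. The statement asserts it with the explicit constant $1/960$, and your proposal does not prove it.
\begin{itemize}
\item In the cycle-index route, the passage from $\GL_n(5)$ to $\SL_n(5)$ is only hoped for. Multiplying the complementary part by a scalar $c\neq\pm1$ moves the eigenvalues $\pm c^{-1}$ onto $\pm1$, so it does not preserve the set you condition on. You give no bound on this ``controllable error''. The factor $\tfrac14\cdot\tfrac14$ you anticipate would by itself push the bound far below $1/960$.
\item In the fallback, a matrix with $\dim\ker(A-I)=k$ is counted $\binom{k}{2}_5$ times, and you never bound the resulting excess.
\item Your closing arithmetic is false: $5^{-4}\cdot 0.6=0.00096<1/960\approx0.00104$. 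The correct normalization of your pair count is not $5^{-4}$ but $\binom{n}{2}_5\,5^{2(n-2)}|\SL_{n-2}(5)|/|\SL_n(5)|=1/|\GL_2(5)|=1/480$.
\end{itemize}

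The missing idea is to count a subset on which the parametrization is injective, so that no overcount correction is needed: require the block $D$ to avoid the eigenvalue $1$ as well. The paper takes $A=I_U\oplus D$ over ordered decompositions $\F_5^n=U\oplus W$ with $\dim U=2$ and $D\in\mathcal R_{n-2}=\{D\in\SL_{n-2}(5):D\pm I\text{ invertible}\}$. Then $U=\ker(A-I)$ and $W=\operatorname{im}(A-I)$, so each $A$ arises once, and $|S|/|G_n|\geq\frac1{480}\cdot|\mathcal R_{n-2}|/|\SL_{n-2}(5)|$.

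The second factor is estimated directly inside $\SL_m(5)$, with no $\GL$-to-$\SL$ transfer. Since $\SL_m(5)$ is transitive on nonzero vectors, a uniform $D$ has expected number of $\lambda$-eigenlines $\frac{5^m-1}{4}\cdot\frac{1}{5^m-1}=\frac14$ for each $\lambda=\pm1$. Markov's inequality and the union bound then give density at least $\frac12$, hence $\frac1{960}$.

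Your block-triangular count becomes exactly this once you also impose that $D-I$ is invertible, since then $\ker(A-I)=U$. Alternatively, you can subtract the excess explicitly: $\binom{k}{2}_5-1\leq30\binom{k}{3}_5$ for $k\geq3$, which costs at most $30/|\GL_3(5)|$ in density.
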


\begin{proof}
The two conditions defining \(S\)
are invariant under conjugation.  They are also invariant under
inversion, since
\[
  A^{-1}-I=-A^{-1}(A-I),
  \qquad
  A^{-1}+I=A^{-1}(A+I).
\]
Thus \(S\) is normal and inverse-closed.

Every \(g\in S^2\) can therefore be written as \(g=AB^{-1}\), with
\(A,B\in S\).  On \(U=\ker(A-I)\) one has
\[
  (A+B)v=(I+B)v.
\]
As \(I+B\) is invertible, \(A+B\) is injective on \(U\).  It follows that
\[
  \rank(g+I)
  =\rank\bigl((A+B)B^{-1}\bigr)
  =\rank(A+B)
  \geq\dim U
  \geq2.
\]
On the other hand, for \(h_n=\operatorname{diag}(1,-I_{n-1})\),
\[
  \det(h_n)=(-1)^{n-1}=1,
  \qquad
  \rank(h_n+I)=1.
\]
Hence \(h_n\in G_n\setminus\{1\}\) but \(h_n\notin S^2\).

It remains to verify the density.  Put \(m=n-2\geq3\) and
\[
  \mathcal R_m:=
  \{D\in\SL_m(5):D-I\text{ and }D+I\text{ are invertible}\}.
\]
If \(D\) is uniform in \(\SL_m(5)\) and
\(\lambda\in\{1,-1\}\), let \(N_\lambda(D)\) be the number of
\(\lambda\)-eigenlines of \(D\).  The group \(\SL_m(5)\) acts
transitively on the nonzero vectors of \(\F_5^m\).  Hence, for each
of the \((5^m-1)/4\) lines, the probability that \(D\) acts on it as
\(\lambda\) is \(1/(5^m-1)\), so
\(\mathbb E N_\lambda(D)=1/4\).  Since \(D-\lambda I\) is singular
exactly when \(N_\lambda(D)\geq1\), Markov's inequality and the union
bound give
\[
  \frac{|\mathcal R_m|}{|\SL_m(5)|}
  \geq1-\sum_{\lambda\in\{1,-1\}}
    \mathbb P(N_\lambda(D)\geq1)
  \geq1-\sum_{\lambda\in\{1,-1\}}
    \mathbb E N_\lambda(D)
  =\frac12.
\]

Now count the matrices \(A=I_U\oplus D\), where
\(\F_5^n=U\oplus W\), \(\dim U=2\), and
\(D\in\mathcal R_m\) acts on \(W\).  The number of ordered
decompositions \(\F_5^n=U\oplus W\), with
\(\dim U=2\) and \(\dim W=m\), is
\[
  \frac{|\GL_n(5)|}{|\GL_2(5)|\,|\GL_m(5)|}.
\]
For each such decomposition, there are \(|\mathcal R_m|\) choices for
\(D\).  Every resulting matrix belongs to \(S\).  Moreover,
\(U=\ker(A-I)\) and \(W=\operatorname{im}(A-I)\), so no matrix is
counted twice.
Using \(|\GL_r(5)|=4|\SL_r(5)|\) and
\(|\GL_2(5)|=480\), we obtain
\[
  \frac{|S|}{|G_n|}
  \geq
  \frac1{|\GL_2(5)|}
  \frac{|\mathcal R_m|}{|\SL_m(5)|}
  \geq\frac1{960}.
\]
Finally, \(|G_n|\to\infty\), so
\(1/960>1/\log_2|G_n|\) for all sufficiently large odd \(n\).
This proves the asserted counterexamples.
\end{proof}

\cref{thm:pyber-q5} also disproves the equal-set covering assertion in
Shalev's 2023 Question~1.5~\cite{ShalevEMS2023}: even an inverse-closed
normal subset of density at least \(1/960\) can have a square missing
a nonidentity element in arbitrarily large finite simple groups.

\begin{remark}
The construction extends to every fixed prime power \(q\geq3\), in
dimensions \(n\geq5\) with \(\gcd(n,q-1)=1\).  (For even \(q\),
replace the exclusion of the eigenvalue \(-1\) by exclusions of
\(\lambda\) and \(\lambda^{-1}\), where
\(\lambda\in\F_q^\times\setminus\{1\}\).)
The resulting inverse-closed normal subsets of \(\PSL_n(q)\) have
density bounded below by a positive constant depending only on \(q\),
while their squares miss a nonidentity element.  We chose \(q=5\)
for simplicity of the size analysis.
\end{remark}

\section*{Acknowledgments}
The author used OpenAI's ChatGPT and Codex, and Google's Gemini during the research and preparation of this manuscript. A first version of the proof of the main theorem was obtained through interactive sessions with ChatGPT 5.6 and Gemini 3.1. In addition, the counterexamples were constructed entirely by these AI systems. Later, ChatGPT Astra simplified the argument in Section 3.2, which originally only bounded the character ratio. This work was partially done while the author was visiting the Simons Institute for the Theory of Computing.

\bibliographystyle{alpha}
\bibliography{references}

\end{document}